\newcommand{\eps}{\varepsilon}
\newcommand{\N}{{\mathbb N}}
\newcommand{\C}{{\mathbb C}}
\newcommand{\R}{{\mathbb R}}
\newcommand{\wand}{wandering domain}
\newcommand{\mcwd}{multiply connected wandering domain}
\newcommand{\tef}{transcendental entire function}
\newcommand{\tmf}{transcendental meromorphic function}
\newcommand{\ef}{entire function}
\newcommand{\sconn}{simply connected}
\newcommand{\mconn}{multiply connected}
\newcommand{\dconn}{doubly connected}
\theoremstyle{plain}
\newtheorem{theorem}{Theorem}[section]
\newtheorem*{thma}{Theorem~A}
\newtheorem*{thmb}{Theorem~B}
\newtheorem{lemma}[theorem]{Lemma}
\theoremstyle{definition}
\theoremstyle{remark}
\theoremstyle{problem}
\theoremstyle{example}
\newtheorem{example}{Example}
\begin{document}
%Topmatter

%Two authors

\title[Multiply connected wandering domains of entire functions]{Multiply connected wandering domains of entire functions}

\author{Walter Bergweiler}
\address{Mathematisches Seminar\\Christian-Albrechts-Universit\"at zu Kiel\\Ludewig-Meyn-Str. 4\\ D-24098 Kiel\\Germany}
\email{bergweiler@math.uni-kiel.de}

\author{Philip J. Rippon}
\address{Department of Mathematics and Statistics \\
   Walton Hall\\
   The Open University \\
   Milton Keynes MK7 6AA\\
   UK}
\email{p.j.rippon@open.ac.uk}

\author{Gwyneth M. Stallard}
\address{Department of Mathematics and Statistics \\
   Walton Hall\\
   The Open University \\
   Milton Keynes MK7 6AA\\
   UK}
\email{g.m.stallard@open.ac.uk}

\thanks{2010 {\it Mathematics Subject Classification.}\; Primary 37F10, Secondary 30D05.\\The first author is supported by Deutsche Forschungsgemeinschaft, Be 1508/7-2, and the ESF Networking Programme HCAA, and last two authors are supported by EPSRC grant EP/H006591/1.}

%End Two authors

%\keywords{asymptotic tracts}

%\subjclass{Primary 37F10, Secondary 30D05}
%\footnote{}
%\date{}

%End topmatter

\begin{abstract}
The dynamical behaviour of a {\tef} in any periodic component of the Fatou set is well understood. Here we study the dynamical behaviour of a {\tef} $f$ in any multiply connected wandering domain~$U$ of $f$. By introducing a certain positive harmonic function $h$ in $U$, related to harmonic measure, we are able to give the first detailed description of this dynamical behaviour. Using this new technique, we show that, for sufficiently large $n$, the image domains $U_n=f^n(U)$ contain large annuli, $C_n$, and that the union of these annuli acts as an absorbing set for the iterates of $f$ in $U$. Moreover, $f$ behaves like a monomial within each of these annuli and the orbits of points in $U$ settle in the long term at particular `levels' within the annuli, determined by the function $h$. We also discuss the proximity of $\partial U_n$ and $\partial C_n$ for large $n$, and the connectivity properties of the components of $U_n \setminus \overline{C_n}$. These properties are deduced from new results about the behaviour of an entire function which omits certain values in an annulus.

\end{abstract}

\maketitle

\section{Introduction}
\setcounter{equation}{0} The \!{\it Fatou set} $F(f)$ of a transcendental entire (or rational) function $f$ is the subset of the plane $\C$ (or the Riemann sphere $\widehat{\C}=\C\cup\{\infty\}$) where the iterates~$f^n$ of $f$ form a normal family. The complement of $F(f)$ in $\C$ (or $\widehat{\C}$ for rational $f$) is called the {\it Julia set} $J(f)$ of $f$. An introduction to the properties of these sets for rational functions can be found in~\cite{aB91},~\cite{CG},~\cite{jM06} or~\cite{nS93}, and for {\ef}s in~\cite{wB93}.

The set $F(f)$ is completely invariant under $f$ in the sense
that $z\in F(f)$ if and only if $f(z)\in F(f)$. Therefore, if $U$ is a component of $F(f)$, a so-called {\it Fatou component}, then there
exists, for each $n=0,1,2,\ldots\,$, a Fatou component $U_n$ such that $f^n(U) \subset U_n$.  If, for some
$p\ge 1$, we have $U_p =U_0= U$, then we say that $U$ is a {\em
periodic} component of {\it period} $p$,
assuming $p$ to be minimal. If $U_n$ is not eventually periodic, then $U$ is a {\it wandering domain} of $f$.

A famous theorem of Sullivan \cite{dS82} says that rational functions do not have wandering domains. As the
dynamical behaviour in periodic components  is well understood (see \cite{aB91}, \cite{CG}, \cite{jM06} or \cite{nS93}) this leads to a complete description of the dynamics of rational functions on their Fatou sets. A key point in Sullivan's argument is that a simply connected {\wand} gives rise to an infinite dimensional Teichm\"uller space associated to the function; see \cite{cMdS98} and \cite{nFcH09} for the Teichm\"uller space of rational and entire functions respectively. For {\mcwd}s a different argument has to be made; see \cite[Section~2]{dS82}, \cite[Section~3]{lB87}, \cite[Lemma~6.10]{cMdS98} and \cite{BKL4}, the last of these now being the standard argument. In fact a wandering annulus only gives rise to a one-dimensional Teichm\"uller space \cite[Theorem~6.1]{cMdS98}.

Eremenko and Lyubich \cite{EL92} as well as Goldberg and Keen \cite{lGlK} extended
Sullivan's theorem to the Speiser class ${\mathcal S}$ of entire functions
for which the set of critical and asymptotic values is finite. In order to extend Sullivan's technique they had to show that Fatou components of functions in ${\mathcal S}$ are {\sconn}. It turns out \cite[Proposition~3]{EL92} that this is in fact the case for the wider Eremenko-Lyubich class ${\mathcal B}$ consisting of all functions for which the set of critical values and asymptotic values is bounded.

Much work on the dynamics of entire functions has been concerned
with the Speiser class and the Eremenko-Lyubich class,
an important tool being a logarithmic change of variable which
yields good distortion estimates; see \cite{BKZ09}, \cite{EL92}, \cite{lR09}, \cite{RRRS} and \cite{gS96} for some results concerning the Eremenko-Lyubich class.

Although Fatou components of functions in class ${\mathcal B}$ are {\sconn}, there are many examples of entire functions with {\mconn} Fatou components. The first such function was given by Baker~\cite{iB63}, who proved later~\cite{iB76} that this function has a multiply connected Fatou
component that is a wandering domain, thus giving the first example of an entire function with a {\wand}. Moreover, Baker showed~\cite{iB75}
that this is not a special property of this example: if $U$ is any
multiply connected Fatou component of a {\tef} $f$, then $U$ is
wandering domain, and has the following properties:
\begin{itemize}
\item[(a)] each $U_n$ is bounded and multiply connected,
\item[(b)] there exists $N\in\N$ such that $U_n$ and~$0$ lie in a bounded complementary component of $U_{n+1}$, for $n\ge N$,
\item[(c)] dist\,$(U_n,0) \to \infty$ as $n\to\infty$.
\end{itemize}
We note that $U_n=f^n(U)$, for $n\in\N$, since $U$ is bounded and thus $f^n: U \to U_n$ is proper; see \cite[Corollary~1]{mH98}, \cite{aB99} or the discussion in Section~3 below. Other examples of {\tef}s having {\mconn} {\wand}s with various additional properties were given in \cite{iB85}, \cite{wB08}, \cite{BZ}, \cite{aH94} and \cite{KS06}, the last of which will be discussed later in this paper.

A number of further results are known about multiply connected wandering domains, such as \cite{RS05} that they are contained in the fast escaping set $A(f)$ introduced in \cite{BH99} which has turned out to be very important in the dynamics of entire
functions; see \cite{lR06}, \cite{RS09a} and~\cite{RRRS}, for example.

In this paper we introduce a new technique for studying {\mcwd}s of {\ef}s; we show that for each {\mcwd} $U$ there is a positive harmonic function ~$h$ defined in $U$ which gives us, for the first time, precise information about the limiting behaviour under iteration of points in $U$. In addition, this technique enables us to strengthen a number of previous geometric results about {\mcwd}s, many of which we show to be best possible.

\begin{theorem}\label{main1a}
Let $f$ be a {\tef} with a {\mconn} {\wand} $U$ and let $z_0 \in U$. Then
\begin{equation}\label{hdef}
 h(z) = \lim_{n\to \infty} \frac{\log |f^n(z)|}{\log |f^n(z_0)|}
\end{equation}
defines a non-constant positive harmonic function $h: U \to \R$, with $h(z_0)=1$.
\end{theorem}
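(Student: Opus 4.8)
\emph{Overview and setup.} The plan is to obtain $h$ as the locally uniform limit of the sequence of normalised positive harmonic functions $h_n(z):=\log|f^n(z)|/\log|f^n(z_0)|$, with convergence established by a Harnack-type comparison that reduces the behaviour at step $n+1$ to that at step $n$ via $f^{n+1}=f\circ f^n$. First I would check that, for all sufficiently large $n$, $h_n$ is a well-defined positive harmonic function on $U$. By Baker's property~(b), $0$ lies in a bounded complementary component of $U_n$ for $n\ge N+1$, so $f^n$ has no zeros in $U$ and $\log|f^n|$ is harmonic there; by Baker's property~(c), $\mathrm{dist}(U_n,0)\to\infty$, so $|f^n|>1$ on $U$ for $n$ large, hence $u_n:=\log|f^n|$ is positive and $h_n=u_n/u_n(z_0)$ is a positive harmonic function on $U$ with $h_n(z_0)=1$. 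By Harnack's inequality, for each compact $K\subset U$ there is $C_K\ge1$ with $1/C_K\le h_n\le C_K$ on $K$ for all large $n$, so $\{h_n\}$ is a normal family and every locally uniform subsequential limit is a positive harmonic function on $U$ with value $1$ at $z_0$. It then remains to prove that the full sequence $(h_n)$ converges and that the limit is non-constant.

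\emph{Convergence.} It suffices to bound $\sum_n\sup_K|h_{n+1}-h_n|$ for each compact $K\subset U$. Since $f^{n+1}=f\circ f^n$, I would write $h_{n+1}=\widetilde h_n\circ f^n$ and $h_n=\ell_n\circ f^n$, where $\widetilde h_n:=\log|f|/u_{n+1}(z_0)$ and $\ell_n:=\log|\,\cdot\,|/u_n(z_0)$ are positive harmonic functions on $U_n$ — using $0\notin\overline{U_{n+1}}$ for the former and $0\notin\overline{U_n}$ for the latter — both equal to $1$ at $w_n:=f^n(z_0)$. Thus $|h_{n+1}(z)-h_n(z)|=|\widetilde h_n(f^nz)-\ell_n(f^nz)|$, which is small precisely when the ratio $\log|f(w)|/\log|w|$ of the two positive harmonic functions $\log|f|$ and $\log|\,\cdot\,|$ on $U_n$ has small oscillation over $f^n(K)$. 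The geometric inputs I would invoke are: (i) $U_n=f^n(U)$ contains a large round annulus $A_n=\{w:\rho_n<|w|<\rho_n^{\lambda_n}\}$ with $\rho_n\to\infty$ and $\lambda_n\to\infty$ (a known property of multiply connected wandering domains); (ii) $f^n(K)$ lies well inside $A_n$, which follows from the two-sided Harnack estimate $u_n|_K\asymp u_n(z_0)$, pinning the modulus of $f^n(K)$ to an interval around $|w_n|$; and (iii) on $A_n$, where $f$ is zero-free, the positive harmonic function $\log|f|$ differs, at points bounded away from $\partial A_n$, only by a factor tending to $1$ from an expression $c_n\log|w|+d_n$ — i.e.\ $f$ is close to a power map there. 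Feeding (i)--(iii) into the comparison, together with the rapid growth of $u_n(z_0)$ coming from $U\subset A(f)$, should give $\sup_K|h_{n+1}-h_n|=O(\eta_n)$ with $\sum_n\eta_n<\infty$, so that $(h_n)$ is locally uniformly Cauchy and the limit $h$ is independent of the chosen subsequence. \textbf{I expect the main obstacle to be step~(iii) and the associated bookkeeping: quantifying how close $\log|f|$ is to a power-map expression on $A_n$ at the precise location of $f^n(K)$, and balancing the resulting error against the growth of $u_n(z_0)$ so that the series actually converges.}

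\emph{Non-constancy.} Suppose for contradiction that $h$ is constant; then $h\equiv h(z_0)=1$, so by Harnack's theorem $\log|f^n|\sim\log|f^n(z_0)|$ uniformly on compact subsets of $U$. This contradicts the geometry of the $U_n$: since $U$ is multiply connected, the annular structure of $U_n=f^n(U)$ has modulus tending to infinity (again by~(i)), so, using that the proper map $f^n:U\to U_n$ does not increase hyperbolic distance together with the position of the orbit $(f^n(z_0))$ within that annular structure, one finds a fixed compact set $K\subset U$ on which $\log|f^n|/\log|f^n(z_0)|$ does not converge to $1$ — equivalently, the nontrivial (indeed unbounded) winding of $f^n$ about the bounded complementary components of $U$ forces $\log|f^n|$ to vary over $U$ by a factor that is not $1+o(1)$ relative to $\log|f^n(z_0)|$. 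Hence $h$ is non-constant, which completes the proof.
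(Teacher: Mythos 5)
Your setup (positivity of $\log|f^n|$ on $U$ for large $n$, Harnack normality, every subsequential limit being a positive harmonic function equal to $1$ at $z_0$) is exactly the paper's starting point. Both of the substantive steps, however, have genuine gaps.

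\emph{Convergence.} Your plan needs a \emph{two-sided} single-step bound $\sup_K|h_{n+1}-h_n|=O(\eta_n)$ with $\sum_n\eta_n<\infty$, and the ingredients (i)--(iii) cannot deliver it. On the annulus $A_n$ the function $f$ is indeed close to a monomial, $\log|f(w)|\approx\log q_n+d_n\log|w|$, but then
\[
h_{n+1}(z)\approx\frac{\log q_n+d_n\,h_n(z)\log r_n}{\log q_n+d_n\log r_n},\qquad r_n=|f^n(z_0)|,
\]
and this is close to $h_n(z)$ only if $|\log q_n|=o(d_n\log r_n)$. Nothing in your (i)--(iii) controls the constant term $\log q_n$ (equivalently, the radial ``level'' to which $f$ sends the inner edge of $A_n$); pinning that down is essentially equivalent to the stabilisation of the levels $a_n$, which the paper obtains only later, \emph{using} $h$. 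The estimates that are actually available --- Harnack in a zero-free annulus, giving $\log m\ge(1-\delta_n)\log M$ (Theorem~\ref{Har}), and the convexity $\log M(r^c,f)\ge c\log M(r,f)$ for $c>1$ (Theorem~\ref{convex}) --- are intrinsically one-sided. Accordingly the paper takes a different route: Lemma~\ref{delta} proves only one-sided but \emph{multi-step} inequalities ($h_{n+m}(z)\ge(1-\delta_n)h_n(z)$ for all $m$ when $h_n(z)\ge1$, and the reverse when $h_n(z)\le1$), valid whenever $U_n$ contains a suitable round annulus about $|w|=|f^n(z)|$; that annulus hypothesis is verified along a convergent subsequence via Lemma~\ref{seq}(b), and uniqueness of the subsequential limit then follows by playing the two one-sided inequalities off against each other for two limits $h$ and $\tilde h$. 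Your Cauchy/telescoping strategy cannot be completed as described.

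\emph{Non-constancy.} This argument does not close either. Theorem~A gives annuli $A(r_n',R_n')\subset f^n(A)$ with $R_n'/r_n'\to\infty$, i.e.\ the \emph{additive} spread $\log R_n'-\log r_n'\to\infty$; but constancy of $h$ is the assertion that the spread of $\log|f^n|$ over a compact set is $o(\log r_n)$, and since $\log r_n\to\infty$ there is no contradiction: $\log(R_n'/r_n')\to\infty$ is perfectly consistent with $\log R_n'/\log r_n'\to1$. Winding numbers do not help (a curve may wind arbitrarily often inside a thin round annulus), and the hyperbolic contraction of $f^n:U\to U_n$ bounds the spread of $f^n(K)$ from \emph{above}, which is the wrong direction. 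The missing idea is a mechanism producing two points with $\log|f^{n+m}(z_2)|\ge(1+c)\log|f^{n+m}(z_1)|$ for a fixed $c>0$ and all $n$: the paper places an Eremenko point (Lemma~\ref{Ere}, from Wiman--Valiron theory) of modulus about $2|f^m(z_1)|$ inside the Theorem~A annulus, so that $|f^n(f^m(z_2))|\ge M^n(2|f^m(z_1)|,f)$, and then applies the convexity estimate \eqref{2r}, $\log M(2r,f^n)\ge(1+\log2/\log r)\log M(r,f^n)$, to obtain a ratio of logarithms bounded away from $1$ uniformly in $n$.
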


{\it Remark}\;\;The harmonic function $h$ defined in \eqref{hdef} depends on $z_0$, but if we replace $z_0\in U$ in the definition by another point $z'_0\in U$, then the resulting harmonic limit function is just $h$ scaled by the positive factor $1/h(z'_0)$.

The function $h$ has an interesting potential theoretic relationship to the domain $U$, which we
describe in Theorem~\ref{h-measure}. Before this, we state some
properties of the image components $U_n=f^n(U)$ that can be obtained
from Theorem~\ref{main1a}. It is known that the $U_n$ are large in the sense described by Theorem~A below, a special case of a result of Zheng~\cite{jhZ06}. We use the following notation:
\[
A(r,R)=\{z:r<|z|<R\}\;\text{ and }\;\overline{A}(r,R)=\{z:r\le |z|\le R\},\;\;\text{for }0<r<R.
\]
\begin{thma}\label{Zheng}
Let $f$ be a {\tef} with
a {\mconn} {\wand} $U$. If $A \subset U$ is a domain that contains a closed curve that is not null-homotopic in $U$, then, for sufficiently large $n \in\N$,
\[
U_n \supset f^n(A)\supset A(r_n,R_n), \]
where $R_n/r_n\to\infty$ as
$n\to\infty$.
\end{thma}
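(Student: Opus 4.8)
The plan is to reduce Theorem~A to a statement about the moduli of the annuli $U_n$, which is the substance of the result of Zheng~\cite{jhZ06} being quoted, and to use Theorem~\ref{main1a} to supply the accompanying oscillation estimate. Since $A$ contains a closed curve that is not null-homotopic in $U$, it contains such a Jordan curve $\gamma$, hence a thin annular neighbourhood $A'\subset A$ of $\gamma$ that is a topological annulus separating some bounded complementary component of $U$ from $\infty$. As $f^n(A)\supset f^n(A')$, it suffices to find $r_n<R_n$ with $R_n/r_n\to\infty$ and $A(r_n,R_n)\subset f^n(A')$.

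There are two ingredients. The first, from Theorem~\ref{main1a}, is an oscillation estimate: since $\mathrm{dist}(U_n,0)\to\infty$ we have $|f^n|\to\infty$ uniformly on $U$, so for large $n$ the functions $\log|f^n|$ are positive and harmonic on $U$, the convergence in \eqref{hdef} is locally uniform by Harnack's inequality, and $h$, being non-constant, is non-constant on the open set $A'$; choosing $p,q\in A'$ with $h(p)<h(q)$ then gives $|f^n(q)|/|f^n(p)|\to\infty$, so by connectedness $f^n(A')$ meets every circle $\{|z|=t\}$ with $|f^n(p)|\le t\le|f^n(q)|$. The second ingredient, which is the heart of the matter, is that for all large $n$ the domain $f^n(A')$ contains a doubly connected subdomain $W_n$ that separates $0$ from $\infty$ and whose modulus tends to $\infty$. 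That $f^n(\gamma)$ has non-zero winding number about $0$ for large $n$ follows from Baker's properties~(b) and~(c) (for large $n$ the hole of $U_n$ containing $0$ also contains $U_{n-1}$), the properness of $f^n\colon U\to U_n$, a winding-number argument, and the fact---forced by property~(b)---that $f$ has a zero; the growth of the modulus of the separating annulus is precisely the content of Zheng's result, and reflects the fact that $f$ wraps round the relevant holes with degree tending to $\infty$---informally, $f$ acts on these annuli like a monomial $z\mapsto z^{d_n}$ with $d_n\to\infty$, which multiplies the modulus. Granting this, one finishes with a standard modulus estimate: a doubly connected domain separating $0$ from $\infty$ of modulus $m$ contains a round annulus $A(r,R)$ with $R/r$ bounded below by a quantity that tends to $\infty$ with $m$; applying this to $W_n\subset f^n(A')$ produces $A(r_n,R_n)\subset f^n(A')\subset U_n$ with $R_n/r_n\to\infty$.

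The main obstacle is the modulus growth in the second ingredient; the oscillation estimate from Theorem~\ref{main1a} is by itself not enough. Indeed, a thin, heavily spiralling doubly connected domain can separate $0$ from $\infty$, have an arbitrarily large ratio $\max|z|/\min|z|$ over a core curve, and yet have arbitrarily small modulus and contain no round annulus at all. One therefore has to use the structure of multiply connected wandering domains---the properness of $f^n\colon U\to U_n$, the existence of zeros of $f$, Baker's nesting~(b) and~(c), and the super-linear growth of the maximum modulus of $f$---to see that $f^n$ fattens the annular part of $U$ rather than merely stretching it. That geometric fattening is exactly where Theorem~\ref{main1a} will be exploited later to identify the monomial-like behaviour of $f$ on the $C_n$; for the size statement of Theorem~A it suffices to appeal to Zheng's theorem.
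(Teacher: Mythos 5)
The paper does not actually prove Theorem~A: it is stated as a special case of a result of Zheng \cite{jhZ06} and simply quoted. In that narrow sense your closing remark --- that ``for the size statement of Theorem~A it suffices to appeal to Zheng's theorem'' --- coincides with what the paper does. But read as a \emph{proof}, your proposal has two problems. First, the step you yourself call ``the heart of the matter'' --- that $f^n(A')$ contains a ring domain separating $0$ from $\infty$ whose modulus tends to infinity --- is never argued; it is precisely the content of the theorem being proved, and deferring it to Zheng means nothing beyond the citation has been established. The informal remark that $f$ ``wraps round the relevant holes with degree tending to $\infty$'' would have to be made precise (properness of $f^n$ on a suitable ring domain, identification of the image ring domain, a Riemann--Hurwitz/Gr\"otzsch argument to transfer moduli), and none of that is done here.

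Second, and more seriously for the paper's internal logic, your first ingredient is circular. You invoke Theorem~\ref{main1a} (existence and non-constancy of $h$) to produce points $p,q$ with $|f^n(q)|/|f^n(p)|\to\infty$. But the paper's proof that $h$ is non-constant is Lemma~\ref{nonc}, whose opening line is ``It follows from Theorem~A and Lemma~\ref{Ere} that\dots''; Theorem~\ref{main1a} therefore sits strictly downstream of Theorem~A and cannot be used to establish it. (You correctly observe that this oscillation estimate would not suffice anyway, since a thin spiralling ring domain can have large radial oscillation yet arbitrarily small modulus --- so the one ingredient you do carry out is both circular and, by your own analysis, dispensable.) The final step you quote --- that a ring domain separating $0$ from $\infty$ with large modulus contains a round annulus $A(r,R)$ with $R/r$ large --- is a correct standard estimate and is indeed the right way to conclude once the modulus growth is available; the gap is that the modulus growth is exactly what remains unproved.
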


It follows from Theorem~A that the Julia set of an {\ef}~$f$ with a {\mconn} {\wand} is not uniformly perfect; see \cite{aH94} and \cite{jhZ02}. Recently \cite{BZ}, criteria have been given to determine whether the boundary of a {\mconn} {\wand} of an {\ef} is uniformly perfect.

Using Theorem~\ref{main1a}, we can show that the images
of a {\mcwd} must contain annuli that are much larger than those given
by Theorem~A. Moreover, the images of {\it any} open set in such a
domain must eventually contain such large annuli.

\begin{theorem}\label{main1}
Let $f$ be a {\tef} with a {\mcwd} $U$. For each $z_0 \in U$ and each open set $D \subset U$ containing $z_0$,
 there exists $\alpha > 0$ such that, for sufficiently large $n \in \N$,
    \[
     U_n \supset f^n(D) \supset A(|f^n(z_0)|^{1-\alpha}, |f^n(z_0)|^{1+\alpha}).
    \]
\end{theorem}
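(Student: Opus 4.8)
The plan is to bootstrap from Theorem~A together with the harmonic function $h$ furnished by Theorem~\ref{main1a}. First I would apply Theorem~A to a subdomain $A\subset D$ that contains a closed curve not null-homotopic in $U$ (such a subdomain exists because $U$ is multiply connected and $D$ is open; we may shrink $A$ so that $\overline{A}\subset U$ and $z_0$ lies in the bounded complementary component of $A$ that the non-trivial loop encircles). This gives, for large $n$, an annulus $A(r_n,R_n)\subset f^n(A)\subset U_n$ with $R_n/r_n\to\infty$. The point of the theorem is that in fact $r_n$ can be taken comparable to $|f^n(z_0)|^{1-\alpha}$ and $R_n$ to $|f^n(z_0)|^{1+\alpha}$; the job is to locate $|f^n(z_0)|$ logarithmically in the ``middle'' of $\log r_n$ and $\log R_n$ and to show the ratio grows at a definite exponential rate in $n$.

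The key mechanism is that $h$ records the asymptotic rate at which $|f^n(z)|$ grows relative to $|f^n(z_0)|$: by \eqref{hdef}, $\log|f^n(z)| \sim h(z)\log|f^n(z_0)|$ for each fixed $z\in U$, and since $h$ is non-constant and positive there are points $z_1,z_2\in D$ with $h(z_1)<1<h(z_2)$; in fact, since $h$ is open (being non-constant harmonic) we can choose $z_1,z_2\in D$ and $\delta>0$ with $h(z_1)\le 1-2\delta$ and $h(z_2)\ge 1+2\delta$. Then for large $n$,
\[
\log|f^n(z_1)| \le (1-\delta)\log|f^n(z_0)|, \qquad \log|f^n(z_2)| \ge (1+\delta)\log|f^n(z_0)|.
\]
Since $z_1,z_2\in f^n(D)\subset U_n$ and $U_n$ is connected, and since (by Baker's properties (a)--(c) and Theorem~A) $U_n$ for large $n$ is a multiply connected bounded domain whose unbounded complementary component is far out and whose structure is essentially annular, the circles $|z|=|f^n(z_1)|$ and $|z|=|f^n(z_2)|$ must actually lie inside $U_n$ (a point of $U_n$ on a given circle, together with the fact that the bounded complementary components of $U_n$ are small relative to the scale on which $|f^n|$ varies on $D$, forces the whole circle into $U_n$ — this is exactly the annulus-trapping argument underlying Theorem~A). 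More carefully, I would push the annulus from Theorem~A forward: applying $f$ to $A(r_n,R_n)$ and using that $f$ maps large annuli to large annuli (the Bohr/Hadamard-type estimate that an entire function omitting no values on an annulus of large modulus covers an annulus of comparably large modulus, as referenced in the abstract), one finds that $f^{n}(A)$ already contains $A(\rho_n,\sigma_n)$ with $\log\sigma_n - \log\rho_n$ growing exponentially in $n$, and that $\log|f^n(z_0)|$ lies strictly between $\log\rho_n$ and $\log\sigma_n$. Combining with the two inequalities above pins down $\alpha$: take $\alpha=\delta/2$ say, so that for large $n$,
\[
A\!\left(|f^n(z_0)|^{1-\alpha},\,|f^n(z_0)|^{1+\alpha}\right)\subset A(\rho_n,\sigma_n)\subset f^n(D)\subset U_n.
\]

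The main obstacle I anticipate is making rigorous the step that turns the \emph{two scattered points} $f^n(z_1), f^n(z_2)$ of differing modulus into a genuine \emph{annulus} inside $f^n(D)$ centred at radius $|f^n(z_0)|$. Theorem~A gives an annulus but does not a priori tell us where $|f^n(z_0)|$ sits within it, nor that its inner and outer radii are \emph{powers} of $|f^n(z_0)|$ with a uniform exponent. To control this I would need a quantitative version of the argument behind Theorem~A — propagating a non-trivial loop forward under $f^n$ and using, at each step, that $f$ restricted to a round annulus of modulus $\ge M$ in the escaping regime maps onto a round annulus of modulus $\ge cM$ for some absolute $c>1$ (so the modulus grows geometrically), together with distortion bounds to see that the image annulus is centred, in the logarithmic scale, near the $f^n$-image of the ``core'' point $z_0$. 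The harmonic function $h$ is what guarantees that $z_0$ is not at an extreme level: its non-constancy supplies the two-sided separation, and its harmonicity (hence openness and the maximum principle) guarantees we can realise any intermediate level, in particular level $1$, on a curve inside $D$ — which, transported forward, is the core circle $|z|=|f^n(z_0)|$ of the annulus we want. Once the geometry is set up, choosing $\alpha$ and verifying the two inclusions is routine.
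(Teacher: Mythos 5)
There is a genuine gap, and it is exactly the step you flag as ``the main obstacle'': converting the two points $f^n(z_1),f^n(z_2)\in f^n(D)$ of logarithmic moduli roughly $(1\mp\delta)\log|f^n(z_0)|$ into a full round annulus contained in $f^n(D)$. None of your proposed repairs closes it. Knowing that $h$ (hence $h_n$ for large $n$) takes values on both sides of $1$ on $D$ only controls the \emph{moduli} of individual image points; to obtain a round annulus you must show that the image of $D$ under $z\mapsto\log f^n(z)$ contains a region of vertical extent at least $2\pi$ at every relevant abscissa, i.e.\ you must control arguments, not just moduli, and $h$ alone cannot do that. The auxiliary claims you invoke do not supply this: the bounded complementary components of $U_n$ are not known at this stage to be small (Example~\ref{exa2} shows $U_n$ can contain large structures far from $B_n$, and in any case the theorem asserts the annulus lies in $f^n(D)$, not merely in $U_n$); Theorem~A gives an annulus $A(r_n,R_n)$ with no information on where $|f^n(z_0)|$ sits relative to it; the covering results of Section~3 produce annuli with inner radius $M(R,f^n)$, whose comparison with $|f^n(z_0)|$ is again the point at issue; and the assertion that a level curve of $h$ ``transported forward is the core circle $|z|=|f^n(z_0)|$'' is false --- images of level curves are Jordan curves trapped in thin annuli (Theorem~\ref{curves}), not circles.

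The paper closes the gap by a different mechanism (Lemma~\ref{seq} part~(b) with $C=\{z_0\}$). On a simply connected neighbourhood $V\subset D$ of $z_0$ one forms the \emph{analytic} functions $g_n=\log f^n/\log r_n$ with $\Re g_n=h_n$, and upgrades the locally uniform convergence $h_n\to h$ to $g_n\to g$ (via the Borel--Carath\'eodory inequality), where $g$ is analytic and non-constant with $\Re g=h$. Openness of $g$ gives a fixed $\eps>0$ with $g_n(V)\supset\{w:|w-g_n(z_0)|<\eps/4\}$ for all large $n$; multiplying by $\log r_n\to\infty$, the set $\log f^n(V)=(\log r_n)\,g_n(V)$ contains a disc of radius $(\eps/4)\log r_n>2\pi$ about $\log f^n(z_0)$, and exponentiating such a disc yields the full round annulus $A(r_n^{1-\eps/8},r_n^{1+\eps/8})\subset f^n(V)\subset f^n(D)$. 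This single scaling observation --- a disc of fixed radius in the $g$-plane becomes, after multiplication by $\log r_n$, a region of height exceeding $2\pi$ in the $\log w$-plane --- is the idea your outline is missing, and it is what makes the nonconstancy of $h$ yield an annulus rather than merely two points at different moduli.
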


It follows from Theorem~\ref{main1} that, for large $n \in \N$,
 \[
 U_n \supset A(r_n, R_n),\; \mbox{ with }\; \liminf_{n\to \infty} \frac{\log R_n}{\log r_n} >1.
 \]
All previously known examples of {\mconn} {\wand}s of entire functions have the stronger property that $\log R_n / \log r_n \to \infty$ as $n \to \infty$. Later (see Example~1 and Example~3 in Section~\ref{examples}) we show that this is not true in general and that $\limsup_{n\to\infty}\log R_n/ \log r_n$ may be arbitrarily close to~$1$.

Theorem~\ref{main1a} also enables us to give a detailed description of the dynamical behaviour in any {\mcwd}, which is analogous to classical results on the behaviour in attracting and parabolic Fatou components, due to Fatou~\cite{pF19} and others (see \cite{jM06} for example), and to results describing the behaviour in escaping periodic components (Baker domains); see \cite{cC81}, \cite{nFcH06} and \cite{hK99}. That is, for any {\mcwd} of $f$ we can identify a simple `absorbing set' for $f$ (that is, a set in which the iterates of all points in $U$ eventually lie), within which the dynamical behaviour of $f$ is well-behaved.

It follows from Theorem~\ref{main1} that if $U$ is a {\mconn} Fatou component and $z_0\in U$, then there exists $\alpha > 0$ such that, for large~$n$, the {\it maximal} annulus centred at $0$ that is contained in $U_n$ and contains $f^n(z_0)$ is of the form
\begin{equation}\label{Bndef}
B_n = A(r_n^{a_n}, r_n^{b_n}),\;\;\text{where } r_n=|f^n(z_0)|, \;0<a_n<1 - \alpha < 1 + \alpha <b_n.
\end{equation}
The annuli $B_n$ have many significant properties. We begin by showing that the union of these annuli acts as an absorbing set for $f$. In fact, we can specify an absorbing set for $f$ consisting of somewhat smaller annuli.
\begin{theorem}\label{main2a}
Let $f$ be a {\tef} with a {\mconn} {\wand} $U$ and let $z_0 \in U$. Then, for each compact subset~$C$ of~$U$, there exists
$N\in\N$ such that
\[
f^n(C)\subset B_n,\;\;\text{for }n\ge N.
\]
Moreover, we have
\[
f^n(C)\subset C_n,\;\;\text{for }n\ge N,
\]
where
\begin{equation}\label{Cn}
 C_n = A\left(r_n^{a_n+2\pi\delta_n},r_n^{b_n(1-3\pi\delta_n)}\right), \;\;\mbox{with } \delta_n = 1/\sqrt{\log r_n}.
\end{equation}
\end{theorem}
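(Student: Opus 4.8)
The plan is to deduce both inclusions from the behaviour of the harmonic function $h$ of Theorem~\ref{main1a}, together with the description of the maximal annuli $B_n$ in \eqref{Bndef}. First I would record the key consequences of Theorem~\ref{main1a}: since $h$ is non-constant, positive and harmonic on the multiply connected domain $U$, on a compact set $C\subset U$ it satisfies $0<m\le h\le M<\infty$, with $m<1<M$ once $C$ is taken large enough to contain $z_0$. Writing $r_n=|f^n(z_0)|$, the definition \eqref{hdef} says that $\log|f^n(z)|/\log r_n\to h(z)$; I would upgrade this pointwise statement to a uniform one on $C$, using Harnack-type estimates for the sequence $\log|f^n(z)|/\log r_n$ of positive harmonic functions (they converge locally uniformly by the usual normal-families argument behind Theorem~\ref{main1a}). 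Hence for every $\eps>0$ there is $N$ with
\[
r_n^{h(z)-\eps}\le |f^n(z)|\le r_n^{h(z)+\eps},\qquad z\in C,\ n\ge N.
\]

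Next I would connect the exponents $a_n,b_n$ of the maximal annulus $B_n$ to the range of $h$. Since $f^n(C)$ is a compact, connected (indeed "annulus-like") subset of $U_n$ whose points have moduli between roughly $r_n^{m}$ and $r_n^{M}$, and since by Theorem~\ref{main1} the annulus $A(r_n^{1-\alpha},r_n^{1+\alpha})$ is already contained in $U_n$, the maximal annulus $B_n=A(r_n^{a_n},r_n^{b_n})$ about $0$ in $U_n$ through $f^n(z_0)$ must satisfy $a_n\le m+o(1)$ and $b_n\ge M-o(1)$; more precisely I expect $a_n\to\inf_U h$ and $b_n\to\sup_U h$ along the relevant directions, but all that is needed here is that $a_n$ is eventually below $m-\eps$ is \emph{false} — rather, one shows $f^n(C)\subset A(r_n^{a_n},r_n^{b_n})$ directly: any $z\in C$ has $|f^n(z)|\ge r_n^{m-\eps}$, and one must check $m-\eps\ge a_n$; this follows because the radial segment from $f^n(z)$ inward to modulus $r_n^{a_n}$ lies in $U_n$ (as $B_n$ is the maximal such annulus and $f^n(z)$ is joined to $f^n(z_0)$ within $f^n(C)$), forcing $h(z)\ge a_n$ in the limit, hence $m>a_n$ eventually, and symmetrically $M<b_n$. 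This gives the first inclusion $f^n(C)\subset B_n$ for $n\ge N$.

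For the sharper inclusion into $C_n$ I would quantify the gap more carefully. The point is that $\delta_n=1/\sqrt{\log r_n}\to0$, so $r_n^{2\pi\delta_n}=e^{2\pi\sqrt{\log r_n}}$ grows, but only sub-polynomially in $r_n$; thus $C_n=A(r_n^{a_n+2\pi\delta_n},r_n^{b_n(1-3\pi\delta_n)})$ is still, on the logarithmic scale, almost all of $B_n$. Because $a_n$ and $b_n$ are bounded away from the limiting values $m$ and $M$ by a \emph{fixed} positive amount (from the previous paragraph, $a_n<m-c$ and $b_n>M+c$ for some $c>0$ and large $n$, say via Theorem~\ref{main1} giving $a_n<1-\alpha$, $b_n>1+\alpha$ while $m,M$ can be squeezed toward $1$ only if $C$ is small — so instead one fixes $C$ and notes $m>a_n+c$, $M<b_n-c$ eventually), the shifts $2\pi\delta_n$ and $3\pi\delta_n b_n$ are eventually smaller than $c$, and we still get $|f^n(z)|\ge r_n^{m-\eps}\ge r_n^{a_n+2\pi\delta_n}$ and $|f^n(z)|\le r_n^{M+\eps}\le r_n^{b_n(1-3\pi\delta_n)}$ for $n$ large.

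The main obstacle I anticipate is the passage from the pointwise limit \eqref{hdef} to \emph{uniform} control on the compact set $C$, and pinning down that the exponents $a_n,b_n$ of the maximal annuli $B_n$ are separated from the extreme values of $h$ on $C$ by a margin that dominates $\delta_n$; this likely needs the precise mechanism by which $B_n$ is produced (i.e.\ revisiting the proof of Theorem~\ref{main1} to see that the annulus it supplies has exponents strictly inside $(a_n,b_n)$, with explicit room), rather than just the qualitative statement quoted above.
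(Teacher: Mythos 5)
There is a genuine gap, and it is exactly at the point you flag at the end: everything hinges on showing that the moduli of $f^n(C)$ are separated from $\partial B_n$ by a \emph{fixed} power of $r_n$, and your proposal does not supply a mechanism for this. The uniform estimate $r_n^{h(z)-\eps}\le|f^n(z)|\le r_n^{h(z)+\eps}$ on $C$ (which is fine, and is indeed part of the machinery behind Theorem~\ref{main1a}) says nothing about how $\min_C h$ and $\max_C h$ compare with $a_n$ and $b_n$. Your attempted bridge --- that the radial segment from $f^n(z)$ inward to modulus $r_n^{a_n}$ lies in $U_n$ because $f^n(z)$ is joined to $f^n(z_0)$ inside $f^n(C)$, ``forcing $h(z)\ge a_n$'' --- is a non sequitur: connectivity of $f^n(C)$ within $U_n$ gives no control on radial segments, and a priori part of $f^n(C)$ could thread between complementary components of $U_n$ down to moduli below $r_n^{a_n}$. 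Nor can you instead invoke $a_n\to\inf_U h$ and $b_n\to\sup_U h$ (or $h\ge a$, $h\le b$ on $U$): in the paper these are consequences of Theorem~\ref{main2c} and Theorem~\ref{h-measure}, whose proofs use Theorem~\ref{main2a}, so that route is circular. The missing ingredient is Lemma~\ref{seq} part~(b): covering $C$ by discs joined to $z_0$ by curves and applying the locally uniform convergence of $g_n=\log f^n/\log r_n$ on simply connected neighbourhoods of these continua, one gets that $U_n$ contains the round annulus $A\bigl(\min_{z\in C}|f^n(z)|\,r_n^{-\eps/8},\,\max_{z\in C}|f^n(z)|\,r_n^{\eps/8}\bigr)$ for some fixed $\eps>0$; since this annulus contains $f^n(z_0)$ and $B_n$ is maximal, it lies in $B_n$, which yields $f^n(C)\subset A(r_n^{a_n+\eps/8},r_n^{b_n-\eps/8})$ --- the fixed margin you were missing, and in particular the first inclusion.

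A second, smaller, defect is your treatment of the shift $3\pi\delta_n b_n$: you assert it is eventually smaller than the margin, but $b_n$ may be unbounded (the paper's Example~2 has $b=\infty$), so $\delta_n b_n$ need not tend to $0$. The paper handles this by a case split with $H=\max_C h$: if $b_n\le 2H$ then $3\pi\delta_n b_n\to0$ and the $\eps/8$ margin suffices, while if $b_n>2H$ then $b_n(1-3\pi\delta_n)>b_n/2>\max_C h_n$ for large $n$ by the uniform convergence $h_n\to h$ on $C$, giving the upper inclusion directly. With the covering lemma and this case analysis your outline becomes the paper's proof; without them the two key inequalities are unproved.
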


{\it Remark}\;\;If the point $z_0\in U$ in the definition of $h$ is replaced by $z'_0\in U$, then it follows from Theorem~\ref{main2a} that the corresponding maximal annuli
\[
B'_n=A(|f^n(z'_0)|^{a'_n},|f^n(z'_0)|^{b'_n})\subset U_n, \;\;\text{where }0<a'_n<1<b'_n,
\]
satisfy, for large $n\in\N$,
\begin{equation}\label{z0dash}
B'_n=B_n;\;\;\text{in particular,}\;\;\frac{a'_n}{a_n}=\frac{\log|f^n(z_0)|}{\log|f^n(z'_0)|}=\frac{b'_n}{b_n}.
\end{equation}

The definition of the annuli $C_n$ appears more unnatural than that of the maximal annuli $B_n$. However, the $C_n$ are annuli on which the iterates of~$f$ behave in a particularly nice way, and the somewhat awkward exponents in the definition of $C_n$ arise from the proof of this good behaviour. For example, in Theorem~\ref{main2b} we obtain an estimate which implies that, for $m \in \N$,
\begin{equation}\label{min}
  \log m(|z|,f^m) \sim \log M(|z|,f^m),\;\;\mbox{for } z \in C_n, \mbox{ as } n \to \infty,
\end{equation}
where
\[
  m(r,f) = \min_{|z| = r}|f(z)|\; \mbox{ and } \;M(r,f) = \max_{|z| = r}|f(z)|
\]
are the {\it minimum modulus} and {\it maximum modulus} of $f$, respectively. In particular, $\limsup_{r\to\infty} \log m(r,f)/\log M(r,f)=1$. This strengthens earlier results in \cite[Proposition~2]{EL92}, \cite[Theorem~10]{wB93} and~\cite{jhZ06}.

Moreover, we can show that in $C_n$ the iterates $f^m$ behave like large degree monomials, in the sense that, for large~$n$,
\begin{equation}\label{mon}
 f^m(z) = q_{n,m} \phi_{n,m}(z)^{d_{n,m}},\;\;\mbox{for } z \in C_n,
\end{equation}
where $\phi_{n,m}$ is a conformal mapping such that $|\phi_{n,m}(z)|$ is relatively
close to $|z|$ in $C_n$, $q_{n,m}>0$ and $d_{n,m} \in \N$; see
Theorem~\ref{poly} for a precise statement. An important
consequence of Theorem~\ref{poly} is the following result.

\begin{theorem}\label{nocrits}
Let $f$ be a {\tef} with a {\mconn} {\wand} $U$, let $z_0 \in U$
and, for large $n \in \N$, let  $C_n$ be defined as
in~\eqref{Cn}. There exists $N\in\N$ such that, for $n \geq N$
and $m \in \N$, there are no critical points of $f^m$ in $C_n$.
\end{theorem}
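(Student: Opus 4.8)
The plan is to deduce Theorem~\ref{nocrits} from the monomial representation \eqref{mon}. Fix $z_0 \in U$ and $m \in \N$. For $n$ large, Theorem~\ref{poly} (the precise form of \eqref{mon}) gives, on $C_n$,
\[
f^m(z) = q_{n,m}\,\phi_{n,m}(z)^{d_{n,m}},
\]
where $\phi_{n,m}$ is a conformal map of $C_n$ into $\C$ with $|\phi_{n,m}(z)|$ comparable to $|z|$ (in particular $\phi_{n,m}(z) \ne 0$ on $C_n$), $q_{n,m} > 0$, and $d_{n,m} \in \N$. Differentiating, $(f^m)'(z) = q_{n,m}\, d_{n,m}\, \phi_{n,m}(z)^{d_{n,m}-1}\, \phi_{n,m}'(z)$. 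Since $\phi_{n,m}$ is conformal on $C_n$ we have $\phi_{n,m}'(z) \ne 0$ there, and since $\phi_{n,m}(z) \ne 0$ the factor $\phi_{n,m}(z)^{d_{n,m}-1}$ is nonzero as well; hence $(f^m)'$ has no zeros in $C_n$, i.e.\ $f^m$ has no critical points in $C_n$. Thus for each fixed $m$ there is $N_m \in \N$ with the stated property for all $n \ge N_m$.

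The one real point to address is the dependence on $m$: the statement asserts a single $N$ that works simultaneously for every $m \in \N$. To get this, I would first note that it suffices to handle small $m$ directly and then bootstrap. Concretely, by Theorem~\ref{main2a} there is $N_1$ such that $f(C_n) \subset C_{n+1}$ for all $n \ge N_1$ (since each point of $C_n$ lies in some compact subset of $U$, or more simply since $f^k(C_n)$ lands in $B_{n+k}$, hence in $C_{n+k}$ once $n$ is large, for all $k \ge 0$). Granting that $f$ has no critical points in $C_n$ for $n \ge N_0$ (the case $m=1$ above), the chain rule gives
\[
(f^m)'(z) = \prod_{k=0}^{m-1} f'\!\bigl(f^k(z)\bigr) \ne 0 \qquad\text{for } z \in C_n,\ n \ge N_0,
\]
because each iterate $f^k(z)$ lies in $C_{n+k}$ with $n+k \ge N_0$, where $f'$ is nonzero. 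Hence $f^m$ has no critical points in $C_n$ for all $m \in \N$ and all $n \ge N := N_0$, which is exactly the claim. (In applying Theorem~\ref{main2a} one takes $C$ to be a suitable compact annulus inside $U$ whose image under $f^{n_0}$, for a fixed large $n_0$, already covers $C_{n_0}$; Theorem~\ref{main1} guarantees such a compact set exists.)

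The main obstacle is therefore not the differentiation step, which is immediate, but verifying the invariance property $f(C_n) \subset C_{n+1}$ for large $n$ that makes the chain-rule bootstrap legitimate — i.e.\ checking that the somewhat awkward exponents $a_n + 2\pi\delta_n$ and $b_n(1-3\pi\delta_n)$ in \eqref{Cn} are chosen precisely so that the $C_n$ form a forward-invariant system. This should follow from the estimates underlying Theorem~\ref{main2a} and Theorem~\ref{poly} (in particular from the fact that $|\phi_{n,m}(z)|$ is relatively close to $|z|$, which controls $|f(z)|$ on $C_n$ and places it within $C_{n+1}$), together with $\delta_n = 1/\sqrt{\log r_n} \to 0$; once that nesting is in hand the rest is routine.
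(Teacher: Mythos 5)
Your first paragraph is, in essence, the paper's entire proof: Theorem~\ref{nocrits} is stated in the paper as an immediate corollary of Theorem~\ref{poly}, which already asserts (for a single $N$, uniformly in $m$) that $f^m$ has no critical points in the doubly connected set $A_{n,m}\supset C_n$. The paper gets this from the Riemann--Hurwitz formula applied to the proper map $f^m:A_{n,m}\to A'_{n,m}$ between doubly connected domains, rather than by differentiating the factorisation $f^m=q_{n,m}\phi_{n,m}^{d_{n,m}}$, but your differentiation argument is an equally valid way to read off the same conclusion. The uniformity in $m$ that worries you is not an issue: the threshold in Theorem~\ref{pol} is a constant $R(f)$ depending only on $f$ (it comes from Theorem~\ref{convex}), and the hypothesis $m(\rho,f^m)>1$ on $B_n$ holds for every $m$ simultaneously once $|f^m|>1$ on $U_n$, so the $N$ in Theorem~\ref{poly} serves all $m$ at once. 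Your proof is therefore complete after the first paragraph, provided you quote Theorem~\ref{poly} with its stated uniformity.

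The bootstrap in your second part, by contrast, is both unnecessary and genuinely gapped. It rests on the forward invariance $f(C_n)\subset C_{n+1}$, which is not established in the paper and which you yourself flag as unverified; the paper only proves $f^m(C_n)\subset B_{n+m}$ (Theorem~\ref{cover}(c)), and since $C_{n+m}$ is a \emph{proper} subannulus of $B_{n+m}$, your parenthetical ``$f^k(C_n)$ lands in $B_{n+k}$, hence in $C_{n+k}$'' has the inclusion backwards. Nor does Theorem~\ref{main2a} deliver it: that theorem applies to a \emph{fixed} compact subset $C$ of $U$, whereas the sets $(f^n)^{-1}(C_n)\cap U$ need not stay inside any fixed compact subset of $U$ as $n\to\infty$ (the relative size of $C_n$ in $B_n$ grows as $\delta_n\to0$). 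If you insist on a chain-rule argument you would need to prove some such nesting from the explicit exponents, and the convexity inequalities of Theorem~\ref{convex} go in the wrong direction to give a lower bound on $M(r^{c'},f)$ for $c'<1$, so this is not routine. Drop the bootstrap and rely on the uniform $N$ of Theorem~\ref{poly}.
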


It is natural to investigate how the sequences of annuli $(B_n)$
and $(C_n)$ relate to each other. For example, we can show that,
for $m \in \N$ and large $n \in \N$,
\begin{equation}\label{cov}
f^m(B_n) \supset A(r_{n+m}^{a_n + \delta_n}, r_{n+m}^{b_n - \delta_n})
\end{equation}
and
\begin{equation}\label{con}
f^m(C_n) \subset B_{n+m};
\end{equation}
see Theorem~\ref{cover}. We remark that $f^m(B_n)$ need not
contain $C_{n+m}$ in general -- it can be shown that  this fails
for one of the examples in Section~\ref{examples} (Example~\ref{exa2}).

The above results enable us to obtain further geometric properties of
the Fatou components $U_n$. For example, we show that
\eqref{cov} implies that the sequences of exponents $(a_n)$ and
$(b_n)$ in the definition of the annuli $B_n$ are both
convergent, including the possibility that $b_n\to\infty$ as
$n\to\infty$. Also, we can prove that, for large $n$, the annulus
$B_n$ forms a relatively large subset of $U_n$ in a certain
sense. To make this precise we define the {\it outer boundary
component} of a bounded domain~$U$ to be the boundary of the
unbounded component of $\C\setminus U$, denoted by
$\partial_{\infty}U$, and the {\it inner boundary component} of
$U$ to be the boundary of the component of $\C\setminus U$ that
contains~$0$ if there is one, denoted by $\partial_{0}U$.

\begin{theorem}\label{main2c}
Let $f$ and $U$ be as in Theorem~\ref{main1a} and, for large $n \in \N$, let $r_n$, $a_n$ and $b_n$ be defined as in \eqref{Bndef}.
\begin{itemize}
\item[(a)] Then, as $n\to\infty$,
\[
a_n \to a\in [0,1) \;\mbox{ and }\; b_n \to b\in (1,\infty].
\]
\item[(b)]
For large $n \in \N$, let $\underline{a}_{\,n}$ and $\underline{b}_{\,n}$ denote the smallest values such that
\[
\{z: |z| = r_n^{\underline{a}_{\,n}}\}\; \mbox{ and }\; \{z: |z| = r_n^{\underline{b}_{\,n}}\}
\]
 meet $\partial_{0}U$ and $\partial_{\infty}U$ respectively, and let $\overline{a}_n$ and $\overline{b}_{n}$ denote the largest such values. Then
 \begin{itemize}
 \item[(i)]
$b \geq \underline{b}_{\,n}\ge b_n$ and $b/a \geq \underline{b}_{\,n}/ \underline{a}_{\,n}\ge b_n/a_n$,  for  sufficiently large $n$;
\item[(ii)]
$\underline{b}_{\,n} \to b$, $\underline{a}_{\,n} \to a$ and $\overline{a}_n \to a$  as  $n \to \infty$.
\end{itemize}
\end{itemize}
\end{theorem}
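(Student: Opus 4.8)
The plan is to derive everything from results already in hand: the covering relation~\eqref{cov} and the containment~\eqref{con} from Theorem~\ref{cover}, the monomial description~\eqref{mon} of $f^m$ on $C_n$ from Theorem~\ref{poly}, the fact that $f^n(z_0)\in C_n$ for large $n$ from Theorem~\ref{main2a}, and the harmonic functions $h_n(w)=\lim_{m\to\infty}\log|f^m(w)|/\log r_{n+m}$ on $U_n$, which are the analogues for $U_n$ of the function in~\eqref{hdef} and satisfy $h_n\circ f^n=h$ on $U$. For part~(a), fix $n$ large enough that $a_n+\delta_n<1<b_n-\delta_n$ (possible as $a_n<1-\alpha$, $b_n>1+\alpha$ and $\delta_n\to0$). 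By~\eqref{cov}, for every $m\in\N$ the domain $U_{n+m}\supset f^m(B_n)$ contains the round annulus $A(r_{n+m}^{a_n+\delta_n},r_{n+m}^{b_n-\delta_n})$, which in turn contains $f^{n+m}(z_0)$, so the maximality of $B_{n+m}$ in~\eqref{Bndef} forces $a_{n+m}\le a_n+\delta_n$ and $b_{n+m}\ge b_n-\delta_n$ for all $m\ge1$. Letting $m\to\infty$ with $n$ fixed gives $\limsup_k a_k\le a_n+\delta_n$ and $\liminf_k b_k\ge b_n-\delta_n$, and then letting $n\to\infty$ gives $\limsup_k a_k\le\liminf_k a_k$ and $\liminf_k b_k\ge\limsup_k b_k$, so $a_n\to a$ and $b_n\to b$; since $0<a_n<1-\alpha<1+\alpha<b_n$, we obtain $a\in[0,1)$ and $b\in(1,\infty]$.

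For part~(b), the structural point is that for large $n$ the annulus $B_n=A(r_n^{a_n},r_n^{b_n})\subset U_n$ (with $a_n>0$) separates $\{|z|<r_n^{a_n}\}$ from $\{|z|>r_n^{b_n}\}$, so every component of $\C\setminus U_n$ lies in one of these two sets; hence $\partial_0U_n\subset\{|z|\le r_n^{a_n}\}$ and $\partial_\infty U_n\subset\{|z|\ge r_n^{b_n}\}$. As $\operatorname{dist}(0,U_n)\to\infty$, the boundary component $\partial_0U_n$ is at distance tending to $\infty$ from $0$, so $\underline a_n>0$ for large $n$; and the two inclusions above immediately give $\underline a_n\le\overline a_n\le a_n$, $\underline b_n\ge b_n$ and $\underline b_n/\underline a_n\ge b_n/a_n$. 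The remaining assertions of~(i) and~(ii) all follow once one knows that $\partial_0U_n$ lies at modulus $r_n^{a+o(1)}$ and that the innermost point of $\partial_\infty U_n$ lies at modulus $r_n^{b+o(1)}$: indeed, with part~(a) and the bounds just obtained, these give $\underline b_n\to b$, $\underline a_n\to a$ and (squeezing by $\underline a_n\le\overline a_n\le a_n$) $\overline a_n\to a$, and a little care with the error terms turns them into the non-asymptotic inequalities $b\ge\underline b_n$ and $b/a\ge\underline b_n/\underline a_n$ for large $n$.

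To locate the boundary components one uses the functions $h_n$. From~\eqref{mon}, taking logarithms, dividing by $\log r_{n+m}$, evaluating also at $f^n(z_0)\in C_n$ to express $\log r_{n+m}$, and letting $m\to\infty$, one finds $h_n(z)=(1+o(1))\log|z|/\log r_n$ uniformly for $z\in C_n$ as $n\to\infty$, so $h_n$ runs from $(1+o(1))a_n$ to $(1+o(1))b_n$ on $C_n$; on the other hand $h_n\circ f^n=h$ and $f^n(U)=U_n$ give $\inf_{U_n}h_n=\inf_U h$ and $\sup_{U_n}h_n=\sup_U h$, independently of $n$. Feeding these facts into the potential-theoretic description of $h$ in Theorem~\ref{h-measure} --- which relates $h$ to harmonic measure and so controls the boundary values of $h$ on $\partial_0U$ and the behaviour of $h$ towards $\partial_\infty U$ --- and transporting back to $U_n$ via $f^n$, one deduces that $h_n$ takes values close to $a$ on the part of $U_n$ near $\partial_0U_n$ and close to $b$ on the part near $\partial_\infty U_n$; comparison with the radial formula on $C_n$ then places $\partial_0U_n$ at modulus $r_n^{a+o(1)}$ and shows that $\partial_\infty U_n$ must come in to modulus $r_n^{b+o(1)}$.

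The main obstacle is exactly this passage from the interior to the boundary. The clean radial estimate $h_n(z)\sim\log|z|/\log r_n$ is available a priori only on $C_n$, which is separated from $\partial U_n$ by the collars $A(r_n^{a_n},r_n^{a_n+2\pi\delta_n})$ and $A(r_n^{b_n(1-3\pi\delta_n)},r_n^{b_n})$ (thin relative to $\log r_n$, yet of unbounded modulus), while beyond $\{|z|=r_n^{b_n}\}$ the domain $U_n$ may behave in a complicated way. Showing that the level sets of $h_n$ near $\partial_0U_n$ and $\partial_\infty U_n$ remain essentially round circles of the predicted radii --- so that the inner boundary cannot creep in towards $0$ and the outer boundary cannot escape far beyond modulus $r_n^b$ --- is where the potential-theoretic analysis of $h$, combined with a careful study of the geometry of the components $U_n$, is indispensable.
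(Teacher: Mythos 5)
Your part (a) is essentially the paper's argument: Theorem~\ref{cover} part~(b) yields $a_{n+m}\le a_n+\delta_n$ and $b_{n+m}\ge b_n-\delta_n$, and convergence of $(a_n)$ and $(b_n)$ follows; that part is fine. Likewise the easy half of part~(b) — $\underline{b}_{\,n}\ge b_n$, $\underline{b}_{\,n}/\underline{a}_{\,n}\ge b_n/a_n$ and $\underline{a}_{\,n}\le\overline{a}_n\le a_n$ from the separation properties of $B_n$ — is correct.

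The rest of part~(b) has a genuine gap. The substantive content of (b)(i) is the reverse pair $b\ge\underline{b}_{\,n}$ and $b/a\ge\underline{b}_{\,n}/\underline{a}_{\,n}$, and your proposal does not prove it. The route you sketch — combining the radial behaviour of $\tilde h_n$ on $C_n$ with ``the potential-theoretic description of $h$ in Theorem~\ref{h-measure}'' — is circular: in the paper, Theorem~\ref{h-measure} is proved \emph{after} and \emph{from} Theorem~\ref{main2c} (its part~(b) explicitly invokes $a_n\to a$ and $\underline{a}_{\,n}\to a$, which is exactly what you are trying to establish), so you cannot lean on it here without an independent proof of the boundary behaviour of $h$. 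Moreover, you yourself identify the passage from the interior annulus $C_n$ to the boundary components $\partial_0U_n$ and $\partial_\infty U_n$ as ``the main obstacle'' and leave it to an unexecuted ``careful study of the geometry of the components $U_n$''; that is precisely the step requiring an argument. The paper's mechanism is elementary and avoids potential theory entirely: fix $\eps>0$ and, for each $m$, choose $w_m$ with $|w_m|=r_n^{\underline{b}_{\,n}-\eps}$ and $|f^m(w_m)|=M(r_n^{\underline{b}_{\,n}-\eps},f^m)$; a subsequential limit $w_0$ of the $w_m$ must lie in $U_n$ (if it lay in a bounded complementary component of $U_n$, its forward images would eventually be surrounded by $B_{n+m}$ and hence too small to be maximal on that circle), so Theorem~\ref{main2a} forces the circle of radius $M(r_n^{\underline{b}_{\,n}-\eps},f^{m_k})\ge r_{n+m_k}^{\underline{b}_{\,n}-\eps}$ into $B_{n+m_k}$, giving $b_{n+m_k}\ge\underline{b}_{\,n}-\eps$ and hence $b\ge\underline{b}_{\,n}$. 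A parallel comparison with a point $z'\in U_n$ of modulus $r_n^{\underline{a}_{\,n}+\eps}$, via Theorem~\ref{convex}, gives $b/a\ge\underline{b}_{\,n}/\underline{a}_{\,n}$, and then (b)(ii) follows by squeezing ($\underline{b}_{\,n}\to b$ from $b\ge\underline{b}_{\,n}\ge b_n\to b$, then $\underline{a}_{\,n}\to a$ from the ratio inequality, then $\overline{a}_n\to a$ from $\underline{a}_{\,n}\le\overline{a}_n\le a_n$). Without this (or some substitute) compactness argument locating a point of $U_n$ of nearly maximal modulus growth on the critical circles, the hard inequalities remain unproved.
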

{\it Remark}\;\; If the point $z_0\in U$ in the definition of $h$ is replaced by $z'_0\in U$, then by \eqref{z0dash} the corresponding limits~$a'$ and~$b'$ satisfy
\[
a'=a/h(z'_0)\;\;\text{and}\;\;b'=b/h(z'_0),
\]
so $b'/a'=b/a$. Also, if $b$ is finite, then so is $b'$ and if $a$ is positive, then so is $a'$.

It follows from Theorem~\ref{main2c} part~(a) and our remarks
after Theorem~\ref{main1} that all previous examples have the
property that the limits $a$ and $b$ defined in
Theorem~\ref{main2c} satisfy $b/a = \infty$. In fact it appears
that, for all these examples, $b<\infty$ and $a = 0$. In the
final section of the paper we construct several new functions
with multiply connected {\wand}s and show that there are examples
(Example~\ref{exa1} and Example~\ref{exa3}) for which $b/a <
\infty$ and other examples (Example~\ref{exa2}) for which $b =
\infty$.

One of these new examples (Example~\ref{exa3}) shows that it is
not possible to strengthen Theorem~\ref{main2c} part~(b)(ii) to
say that $\overline{b}_n \to b$. Indeed, in this example we have
$b < \infty$ and $\overline{b}_{n}/b_n \to \infty$ as $n \to
\infty$, which shows that the outer boundary component of a
{\mcwd} can be highly distorted.

Our next theorem shows how the function~$h$ introduced in Theorem~\ref{main1a} behaves near the boundary components of~$U$. Here, for any hyperbolic domain $U$ and any Borel set $E\subset \partial U$, we denote by $\omega(\cdot,E,U)$ the {\it harmonic measure} of $E$ in~$U$; for details of harmonic measure and regular boundary points, see \cite{tR95}, for example.

\begin{theorem}\label{h-measure}
Let $f$, $U$ and $h$ be as in Theorem~\ref{main1a}, and let $a$ and $b$ be defined as in Theorem~\ref{main2c}. Then
\begin{itemize}
\item[(a)]
$a=\inf_{z\in U}h(z)$ and $b=\sup_{z\in U}h(z)$;
\item[(b)]
the function $h$ extends continuously to $\partial U\setminus \partial_{\infty}U$ with
\[h(\zeta)=a,\;\;\text{for } \zeta\in\partial U\setminus \partial_{\infty}U;\]
\item[(c)] the domain $U$ is regular for the Dirichlet problem;
\item[(d)]
\begin{itemize}
\item[(i)] if $b<\infty$, then
\[h(z)=a+(b-a)\omega(z,\partial_{\infty}U,U),\;\;\text{for }z\in U;
\]
\item[(ii)] if $b=\infty$, then
\[
\omega(z,\partial_{\infty}U,U)=0,\;\;\text{for }z\in U.
\]
\end{itemize}
\end{itemize}
\end{theorem}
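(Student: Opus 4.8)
The plan is to read off all four parts from the way $h$ interacts with the image domains $U_n$. Throughout I use the elementary observation that, for $z\in U$, applying Theorem~\ref{main2a} to the compact set $\{z_0,z\}$ gives $f^n(z)\in B_n$, hence $a_n<\log|f^n(z)|/\log r_n<b_n$, for all large $n$; letting $n\to\infty$ and invoking Theorem~\ref{main2c}(a) yields $a\le h(z)\le b$ on $U$, so $a\le\inf_U h$ and $\sup_U h\le b$. For part~(c) I would argue that every complementary component of $U$ is a non-degenerate continuum: $\partial_\infty U$ is the boundary of a component of the complement of the continuum $\overline U$, and a one-point complementary component of $U$ would be an isolated point of the perfect set $J(f)$, which is impossible. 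Hence each boundary point of $U$ admits a barrier and $U$ is regular for the Dirichlet problem.

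The engine for parts (b) and (d) is the following. For large $n$ the function $z\mapsto\log|f^n(z)|$ is positive and harmonic on $U$, since $\operatorname{dist}(U_n,0)\to\infty$, and it extends continuously to $\overline U$; so by~(c),
\[
\frac{\log|f^n(z)|}{\log r_n}=\int_{\partial U}\frac{\log|f^n(\zeta)|}{\log r_n}\,d\omega(z,\zeta,U),\qquad z\in U .
\]
The key geometric input, which I would establish from the fine structure of $U_n$ and $C_n$ (the monomial description \eqref{mon}, \eqref{con} and Theorem~\ref{cover}, together with the analysis of the components of $U_n\setminus\overline{C_n}$), is that, for large $n$, $f^n$ carries $\partial_\infty U$ into $\partial_\infty U_n\subset\{|w|\ge r_n^{\underline b_{\,n}}\}$ and $\partial U\setminus\partial_\infty U$ into $\partial U_n\setminus\partial_\infty U_n\subset\overline A(r_n^{\underline a_{\,n}},r_n^{a_n})$; equivalently, that $U_n$ has no complementary component lying beyond the maximal annulus $B_n$ on its outer side. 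Granting this, and using $\underline a_{\,n},a_n\to a$ and $\underline b_{\,n}\to b$ from Theorem~\ref{main2c}, splitting the integral over $\partial_\infty U$ and its complement and letting $n\to\infty$ gives
\[
h(z)\ \ge\ a\bigl(1-\omega(z,\partial_\infty U,U)\bigr)+b\,\omega(z,\partial_\infty U,U),\qquad z\in U ,
\]
where the right-hand side is read as $+\infty$ when $b=\infty$ and $\omega(z,\partial_\infty U,U)>0$. Since $h$ is finite‑valued this already proves part~(d)(ii); and when $b<\infty$ it gives $h\ge a+(b-a)\,\omega(\cdot,\partial_\infty U,U)$ on $U$. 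The same splitting, together with the uniform convergence $\log|f^n(\zeta)|/\log r_n\to a$ on $\partial U\setminus\partial_\infty U$, is what I would use to prove part~(b): this uniform boundary estimate forces $h$ to extend continuously by the value $a$ to $\partial U\setminus\partial_\infty U$. This last step is the most delicate point after Theorem~\ref{main1a}, since the convergence of $\log|f^n|/\log r_n$ to $h$ is only locally uniform in $U$; one must track, quantitatively, how long the orbit of a point of $U$ near $\partial U\setminus\partial_\infty U$ stays close to the inner boundary components of the $U_n$ before it escapes into the annuli $C_n$, and this is where the geometric machinery re‑enters.

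With (b), (c) and the inequality $h\ge a+(b-a)\,\omega(\cdot,\partial_\infty U,U)$ in hand, part~(d)(i) is a maximum‑principle argument. Assume $b<\infty$. Since $h\le b$ while $h(z)\ge a+(b-a)\omega(z,\partial_\infty U,U)$, and the latter tends to $b$ at every regular point of $\partial_\infty U$, a squeeze shows $h$ extends continuously by the value $b$ to $\partial_\infty U$. Hence $g:=a+(b-a)\,\omega(\cdot,\partial_\infty U,U)-h$ is a bounded harmonic function on the regular domain $U$ that extends continuously to $\partial U$ by the value $0$ (by~(b) on $\partial U\setminus\partial_\infty U$ and by the above on $\partial_\infty U$), so $g\equiv0$; that is, $h=a+(b-a)\,\omega(\cdot,\partial_\infty U,U)$. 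Finally, part~(a): when $b<\infty$, (d)(i) together with $\sup_U\omega(\cdot,\partial_\infty U,U)=1$ (valid since $\partial_\infty U\ne\emptyset$ and $U$ is regular) gives $\sup_U h=b$, while $\inf_U h=a$ follows from~(b) since $\partial U\ne\partial_\infty U$; and when $b=\infty$, evaluating at points $z\in U$ with $f^n(z)$ near the outer edge of $C_n$, where by \eqref{mon} and \eqref{con} one has $h(z)=\log|f^n(z)|/\log r_n+o(1)$ with outer modulus‑exponent $b_n(1-3\pi\delta_n)\to\infty$, gives $\sup_U h=\infty$.

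The main obstacle, then, is the geometric input used in the second paragraph and reused for part~(b): showing that for large $n$ the only complementary component of $U_n$ that lies outside a thin annular shell around $\{|w|=r_n^{a}\}$ is the outer one $\partial_\infty U_n$ (no ``islands'' beyond $B_n$), and that $f^n$ sends the inner and outer parts of $\partial U$ into the inner and outer parts of $\partial U_n$ respectively. I would expect to prove these by combining the description of $f^m$ on $C_n$ as a high‑degree monomial on an annulus (\eqref{mon}--\eqref{cov} and Theorem~\ref{cover}), the minimum‑modulus estimate \eqref{min}, Theorem~\ref{nocrits}, and the connectivity analysis of $U_n\setminus\overline{C_n}$, passing back and forth between the consecutive domains $U_n$ and $U_{n+1}$ via Baker's nesting property.
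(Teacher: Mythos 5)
Your overall architecture for part~(d) (a one-sided Perron/harmonic-measure estimate using $h_n\ge \underline{b}_{\,n}$ on $\partial_\infty U$ and $h_n\ge\underline{a}_{\,n}$ elsewhere) matches the paper, and the "key geometric input" you worry about is in fact easy: by Theorem~\ref{main2a} a Jordan curve $\gamma\subset U$ enclosing a component of $\partial U\setminus\partial_\infty U$ has $f^n(\gamma)\subset C_n\subset B_n$, so $f^n(\mathrm{int}(\gamma)\cap\partial U)\subset\overline{A}(r_n^{\underline{a}_{\,n}},r_n^{a_n})$, and $\underline{a}_{\,n},a_n\to a$ by Theorem~\ref{main2c}(b)(ii). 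But several steps you rely on are genuinely broken or missing. First, your proof of (c): a one-point complementary component of $U$ need not be an isolated point of $J(f)$ — other complementary components of $U$ may accumulate at it — so you have not shown that every complementary component is a non-degenerate continuum. The paper gets (c) \emph{from} (b), using $a-h$ as a barrier at each point of $\partial U\setminus\partial_\infty U$. Second, you explicitly leave part~(b) unproved ("the most delicate point"); the resolution requires no orbit-tracking: on $V=\mathrm{int}(\gamma)\cap U$ the harmonic functions $h_n-h_m$ converge uniformly to $0$ on $\partial V=\gamma\cup(\mathrm{int}(\gamma)\cap\partial U)$ (on $\gamma$ by local uniform convergence in $U$, on the rest because $h_n\in[\underline{a}_{\,n},a_n]\to a$ there), so by the maximum principle $h_n$ converges uniformly on $\overline{V}$ and the limit extends continuously by $a$. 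Note also that your alternative route to (b) via the integral representation fails on the outer piece, since $h_n$ is only bounded above by $\overline{b}_n$ on $\partial_\infty U$ and $\overline{b}_n/b_n\to\infty$ is possible (Example~\ref{exa3}).

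Third, the upper bound in (d)(i) as you give it is wrong: $h$ does \emph{not} in general extend continuously by the value $b$ to $\partial_\infty U$, because components of $\partial U\setminus\partial_\infty U$ (where $h\to a$) may accumulate on $\partial_\infty U$ — exactly the situation of infinite outer connectivity — so your function $g$ need not have continuous boundary value $0$ and the maximum-principle step collapses. The correct argument is one-sided: using (b) and $b=\sup_U h$, the function $(h-a)/(b-a)$ lies in the Perron family for $\chi_{\partial_\infty U}$, hence is $\le\omega(\cdot,\partial_\infty U,U)$. This exposes the fourth problem, a circularity in your logic: you derive $\sup_U h=b$ in (a) from (d)(i) (plus the unjustified assertion $\sup_U\omega(\cdot,\partial_\infty U,U)=1$, which is not clear when $\partial_\infty U$ is not isolated from the rest of $\partial U$), whereas the proof of (d)(i) needs $\sup_U h=b$ as input. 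The paper proves $\sup_U h=b$ (and $\inf_U h=a$) directly and dynamically via Lemma~\ref{delta}(b),(c): a point with $h_n(z)=b''<b$ satisfies $h_{n+m}(z)\ge b''(1-\delta_n)$ for all $m$, so its level never drops appreciably; this is also the rigorous version of your sketch for the case $b=\infty$. You should restructure along these lines: (a) first by Lemma~\ref{delta}, then (b) by the maximum-principle/uniform-convergence argument, then (c) from (b), then (d) by the two one-sided Perron comparisons.
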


{\it Remark}\;\; Theorem~\ref{h-measure} part~(c) implies that any Fatou component of a {\tef} is regular for the Dirichlet problem, since this is true for {\sconn} domains. For rational functions, this property was proved by Hinkkanen~\cite{aH93}.

Theorem~\ref{h-measure} can be related to Theorem~\ref{main1a} as follows. Theorem~\ref{main1a} states, roughly speaking, that for any point $z\in U$ and large~$n$ the quantity $|f^n(z)|$ is approximately equal to $|f^n(z_0)|^{\,h(z)}$. Thus $h(z)$ can be interpreted as the `level' at which the orbit of $z$ eventually settles down, in relation to the orbit of $z_0$. It is natural to expect that the level associated with a point $z\in U$ will be highest near the outer boundary component of $U$; Theorem~\ref{h-measure} makes this idea precise.

The level sets of the function $h$ therefore consist of those points whose iterates tend to infinity `at the same rate'. In Theorem~\ref{curves} we obtain detailed results about the location of the images of these level sets. In particular, we show that, for large
~$n$, every annulus $B_n$ contains a family of simple closed
`level curves' surrounding~0, each of which maps onto another
such curve under any iterate of~$f$.

Our results also have applications to the {\it connectivity} properties of {\mconn} {\wand}s, enabling us to classify all the possibilities that can occur and relate these to the location of the critical points of $f$. In particular, Theorem~\ref{h-measure} allows us to deduce that a certain connectivity property of $U$ implies that the number $b$ introduced in Theorem~\ref{main2c} must be finite.

Let $c(G)$ denote the connectivity of a domain~$G$, that is, the number of connected components of the complement of $G$ with respect to the Riemann sphere. Kisaka and Shishikura \cite{KS06}
defined the {\it eventual connectivity} of a component~$U$ of
$F(f)$ to be $c$ provided that $c(U_n) = c$ for all
large values of~$n$ and showed that the eventual connectivity of a multiply connected wandering domain
$U$ of an entire function is either~$2$ or~$\infty$. More precisely, they used the Riemann-Hurwitz formula (see
Section~\ref{omit}) to prove the following result \cite[Theorem A]{KS06}.

\begin{thmb}\label{KS}
Let $f$ be a {\tef} with a {\mconn} {\wand} $U$ and let $n \in \N$. Then
\begin{itemize}
\item[(a)] if $c(U_n) = 2$, then $\bigcup_{m=n}^{\infty}U_m$ contains no critical points of $f$;
\item[(b)] if $c(U_n) < \infty$, then $(c(U_m))_{m\geq n}$ is a non-increasing sequence and $c(U_m) = 2$ for sufficiently large $m \in \N$;
\item[(c)] if $c(U_n) = \infty$, then $c(U_m) = \infty$ for $m>n$.
\end{itemize}
\end{thmb}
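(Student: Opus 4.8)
The plan is to run the classical argument via the {\RH}, tracking degrees and critical points. I would first record the structural facts I need: each $U_m=f^m(U)$ is bounded and {\mconn} by Baker's results, so $c(U_m)\ge2$; the map $f\colon U_m\to U_{m+1}$ is proper of some finite degree $d_m\ge1$ (the properness and finiteness follow as for the maps $f^k\colon U\to U_k$, which are proper between bounded, hence hyperbolic, domains); $U_m$ is {\fconn} if and only if $U_{m+1}$ is (a standard fact for proper finite-degree holomorphic maps between plane domains); and, whenever the connectivities involved are finite, the {\RH} reads
\[
c(U_m)-2=d_m\bigl(c(U_{m+1})-2\bigr)+\delta_m,
\]
where $\delta_m\ge0$ counts, with multiplicity, the critical points of $f$ in $U_m$.

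Parts~(a) and~(c) are then quick. For~(c) I would argue contrapositively: if $c(U_{n+1})<\infty$ then $c(U_n)<\infty$; hence $c(U_n)=\infty$ forces $c(U_{n+1})=\infty$, and iterating gives $c(U_m)=\infty$ for all $m>n$. For~(a), if $c(U_n)=2$ then $U_{n+1}$ is {\fconn}, so the displayed identity at level $n$ becomes $0=d_n(c(U_{n+1})-2)+\delta_n$ with both summands non-negative; thus $c(U_{n+1})=2$ and $\delta_n=0$, and induction gives $c(U_m)=2$ and $\delta_m=0$ for all $m\ge n$, i.e.\ no critical point of $f$ lies in any $U_m$ with $m\ge n$.

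For~(b), assume $c(U_n)<\infty$; then all $U_m$ with $m\ge n$ are {\fconn}, the {\RH} applies throughout, and since $d_m\ge1$ and $c(U_{m+1})\ge2$ it gives $c(U_m)\ge c(U_{m+1})$. So $(c(U_m))_{m\ge n}$ is a non-increasing sequence of integers $\ge2$, eventually equal to a constant $c\ge2$, say for $m\ge N$. To rule out $c\ge3$, note that in that case the identity forces $(d_m-1)(c-2)+\delta_m=0$ for $m\ge N$, whence $d_m=1$: $f\colon U_m\to U_{m+1}$ is then a conformal bijection for every $m\ge N$, so $\varphi_k:=f^k|_{U_N}\colon U_N\to U_{N+k}$ is a conformal bijection for all $k\ge0$. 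But, by Theorem~A, $U_{N+k}$ contains for large~$k$ a round annulus $A(r,R)$ with $R/r\to\infty$, and since eventually $0$ lies in a {\bcc} of $U_{N+k}$ (in particular $0\notin U_{N+k}$) the core of $A(r,R)$ separates $0$ from $\infty$, so this annulus is not null-homotopic in $U_{N+k}$ and its modulus tends to~$\infty$. Transporting it back by $\varphi_k^{-1}$ would give, inside the \emph{fixed} {\fconn} domain $U_N$, a not-null-homotopic embedded annulus of the same unbounded modulus --- impossible, since a fixed {\fconn} plane domain has only finitely many free homotopy classes of essential closed curves and, in each class, embedded annuli of bounded modulus (each such annulus separates two fixed disjoint non-degenerate continua, namely unions of the complementary components of $U_N$, which are non-degenerate because $J(f)$ has no isolated point). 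Hence $c=2$.

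The bookkeeping in~(a), (c) and the monotonicity in~(b) is routine; the real work is the final step of~(b). There the {\RH} pins $f$ down easily to a conformal bijection between the $U_m$, but the contradiction genuinely requires the geometric input that the images $U_m$ contain essential annuli of modulus tending to infinity --- this is Theorem~A together with Baker's fact about the position of~$0$ --- combined with the rigidity of the single fixed domain $U_N$, i.e.\ the uniform bound on moduli of essential annuli in a {\fconn} domain.
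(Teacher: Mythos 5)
Your proof is correct. Note first that the paper does not actually prove this statement: it is quoted as Theorem~B from Kisaka and Shishikura \cite{KS06}, with only the remark that their proof uses the {\RH}; the paper re-derives just the easy direction of part~(a) inside the proof of Theorem~\ref{main3}. Your reconstruction is essentially the known argument. The bookkeeping in (a), (c) and the monotonicity in (b) follow from the identity $c(U_m)-2=d_m(c(U_{m+1})-2)+\delta_m$ exactly as you state, and the one genuinely non-trivial point --- excluding an eventual connectivity $c$ with $3\le c<\infty$ --- is correctly handled: Riemann--Hurwitz then forces $d_m=1$, so $f^k\colon U_N\to U_{N+k}$ is a conformal bijection, and this is incompatible with Theorem~A together with Baker's property~(b), since the images $U_{N+k}$ contain essential round annuli of modulus tending to infinity while essential ring domains in the fixed finitely connected domain $U_N$ have uniformly bounded modulus (each separates a bounded complementary component of $U_N$ from the unbounded one, both non-degenerate because $U_N$ is finitely connected and $J(f)$ is perfect, so Teichm\"uller's modulus theorem applies). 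One small slip: a finitely connected domain with $c\ge3$ has \emph{infinitely} many free homotopy classes of essential closed curves; what you need, and what your parenthetical actually establishes, is the bound for embedded annuli, whose cores are \emph{simple} closed curves and fall into finitely many classes determined by which complementary components of $U_N$ they enclose. This imprecision does not affect the argument.
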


By using quasiconformal surgery, Kisaka and Shishikura \cite[Theorem B]{KS06} constructed an
example of an entire function $f$ with a {\mconn} {\wand}
with eventual connectivity~$2$, thus answering an old open question \cite[page 167]{wB93}. Earlier \cite{iB85} Baker constructed
an example which has infinite (eventual) connectivity. It has recently been shown~\cite{BZ} that Baker's original example~\cite{iB76} of a multiply connected {\wand} also has infinite connectivity.

In our next result we use Theorem~\ref{main2a} to show that in Theorem~B part~(a) the condition that $\bigcup_{m=n}^{\infty}U_m$ contains no critical points of $f$ is not only necessary but is also sufficient for $U_n$ to be doubly connected.

\begin{theorem}\label{main3}
Let $f$ be a {\tef} with a {\mconn} {\wand} $U$ and let $n \in \N$. Then
\begin{itemize}
\item[(a)] $c(U_n) = 2$ if and only if
\;$\bigcup_{m=n}^{\infty}U_m$ contains no critical points of $f$;
\item[(b)] $2 < c(U_n) < \infty$ if and only if
\;$\bigcup_{m=n}^{\infty}U_m$ contains a finite non-zero number of critical points of $f$;
\item[(c)] $U_n$ is infinitely connected if and only if
\;$\bigcup_{m=n}^{\infty}U_m$ contains infinitely many critical
points of $f$. \end{itemize}
\end{theorem}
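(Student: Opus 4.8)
The three equivalences are closely linked; the only part that requires a genuinely new idea is the converse half of~(a): \emph{if $\bigcup_{m=n}^{\infty}U_m$ contains no critical points of $f$ then $c(U_n)=2$}. Granting this, the forward implication of~(a) is exactly Theorem~B part~(a), and parts~(b) and~(c) follow by combining~(a) with the Riemann--Hurwitz formula and Theorem~B, as follows. Since $U_n$ is bounded, each $f^k\colon U_n\to U_{n+k}$ is proper of some finite degree $d_{n,k}$, so the Riemann--Hurwitz formula gives $c(U_n)-2=d_{n,k}(c(U_{n+k})-2)+\delta_{n,k}$ with $\delta_{n,k}\ge 0$, and by the chain rule $\delta_{n,k}=0$ exactly when $U_n\cup\cdots\cup U_{n+k-1}$ contains no critical point of $f$. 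If $c(U_n)<\infty$, then $c(U_m)=2$ for $m\ge M$ by Theorem~B part~(b), and applying the Riemann--Hurwitz formula to $f\colon U_m\to U_{m+1}$ for $m\ge M$ forces $\delta_{m,1}=0$, i.e.\ no critical point of $f$ lies in $U_m$; together with the finitely many critical points in the bounded domains $U_n,\dots,U_{M-1}$, this shows that $c(U_n)<\infty$ implies $\bigcup_{m\ge n}U_m$ contains only finitely many critical points of $f$. Assuming the converse of~(a): if $c(U_n)=\infty$, then $c(U_m)=\infty$ for all $m\ge n$ by Theorem~B part~(c), so by~(a) the set $\bigcup_{m\ge M}U_m$ must contain a critical point of $f$ for every $M\ge n$, hence $\bigcup_{m\ge n}U_m$ contains infinitely many; this is the forward implication of~(c). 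The reverse implication of~(c), and both implications of~(b), then follow formally from~(a), from the forward implication of~(c), and from the finiteness statement just established.

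It thus remains to prove the converse of~(a), and we may assume $n$ is large enough that $C_n$ is defined (for smaller $n$ the claim follows from the large-$n$ case by applying the Riemann--Hurwitz formula to the unbranched map $f^{N-n}\colon U_n\to U_N$). Since $U_n$ is itself a multiply connected wandering domain of $f$, Theorem~\ref{main2a} applies to it; in particular $C_n\subset U_n$, and since $C_n$ is a round annulus centred at $0$ it surrounds the complementary component $\partial_0U_n$ of $U_n$ that contains $0$. Fix a compact round annulus $\Gamma$ with $\Gamma\subset C_n$ surrounding $\partial_0U_n$, and suppose for a contradiction that $c(U_n)\ge3$. Under our hypothesis $f^k|_{U_n}$ is proper and unbranched, hence a finite-sheeted covering of $U_{n+k}$; by Theorem~\ref{main2a} we have $f^k(\Gamma)\subset C_{n+k}$ for all large $k$, and since $C_{n+k}$ is a round annulus the component $\Omega_k$ of $(f^k|_{U_n})^{-1}(C_{n+k})$ that contains $\Gamma$ is a topological annulus; it surrounds $\partial_0U_n$ because $\Gamma$ does, so the winding number of its core curve $c_k$ about $0$ is $\pm1$. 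Because $c(U_n)\ge3$, there is a loop $\lambda$ in $U_n$ with winding number $0$ about $0$ but nonzero winding number about some other complementary component of $U_n$, so that $\lambda$ is not null-homologous in $U_n$ (such a loop exists by standard facts about the homology of planar domains, using that no complementary component of $U_n$ surrounds $\partial_0U_n$). On the other hand, joining $\Gamma$ to $\lambda$ by a path in $U_n$ and applying Theorem~\ref{main2a} to the resulting compact connected set shows that $\lambda\subset\Omega_k$ for all large $k$; then $[\lambda]$ lies in the image of $H_1(\Omega_k)\cong\Z$ in $H_1(U_n)$, so $[\lambda]$ is an integer multiple of $[c_k]$, and comparing winding numbers about $0$ forces that multiple to be $0$, i.e.\ $\lambda$ is null-homologous --- a contradiction. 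Hence $c(U_n)=2$.

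The main obstacle is this last step, and within it the case $c(U_n)=\infty$: the Riemann--Hurwitz argument settles $c(U_n)<\infty$ via Theorem~B but is vacuous when $c(U_n)=\infty$, so one genuinely needs the dynamical content of Theorem~\ref{main2a} --- the absorbing annuli $C_n$ and the fact that the $f$-iterates of any compact subset of $U_n$ eventually lie inside them. The points requiring care are: that a finite-sheeted covering of a round annulus has only annular components, so that $\Omega_k$ really is an annulus surrounding $\partial_0U_n$; that a path joining $\Gamma$ to $\lambda$ is swept into $\Omega_k$ for large $k$ (a second use of Theorem~\ref{main2a}); and the topological input about loops in planar domains used to produce $\lambda$, for which the key fact is that no complementary component of $U_n$ surrounds $\partial_0U_n$.
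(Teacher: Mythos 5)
Your proposal is correct and follows essentially the same route as the paper: parts (b) and (c) are reduced to part (a) via Theorem~B and the Riemann--Hurwitz formula, and the key converse of (a) is proved by using Theorem~\ref{main2a} to sweep a suitable compact connected subset of $U_n$ into the absorbing annulus $C_{n+k}$ and then applying the Riemann--Hurwitz formula to the critical-point-free proper map from the preimage component onto that annulus. The only difference is cosmetic: where the paper takes a compact connected set $C$ admitting no doubly connected intermediate domain and derives an immediate contradiction, you make the same topological obstruction explicit via winding numbers and the homology of $\Omega_k$.
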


We also characterise those {\mconn} {\wand}s $U$ which are infinitely connected but for which
the infinite connectivity occurs either near the outer boundary component
of~$U$ or near the inner boundary component of~$U$. Examples of multiply connected {\wand}s with these connectivity properties were given in~\cite{BZ}.

If $U$ is a multiply connected Fatou component, $z_0 \in U$ and $r_n = |f^n(z_0)|$, then, for large $n \in \N$, we put
\begin{equation}\label{oi}
U_n^{+} = U_n \cap \{z: |z|> r_n\}\quad\text{and}\quad  U_n^{-}= U_n \cap \{z: |z|< r_n\}.
\end{equation}
 We define $c(U_n^-)$ to be the {\it inner connectivity} of $U_n$ and $c(U_n^+)$ to be the {\it outer connectivity} of $U_n$.  We show that the results of Theorem~B and Theorem~\ref{main3} also hold for the inner and outer connectivities in most cases; see Theorem~\ref{main4a} for details. There is one interesting exception -- if the inner connectivity is infinity then the outer connectivity can only take the values $2$ and $\infty$. Further, in this case, it is possible that, for some $n \in \N$, we have $c(U_n^+) = \infty$ and $c(U_{n+1}^+) = 2$.

We say that a {\mconn} {\wand} $U$ has {\it eventual outer connectivity}~$c$ if
$U_n$ has outer connectivity $c$ for all large values of $n$.
It follows from Theorem~\ref{main4a} that $U$ has eventual outer connectivity
either~$2$ or~$\infty$. The {\it eventual inner connectivity} of
$U$ is defined similarly using the sets $U_n^{-}$ and is also equal to either~$2$ or $\infty$.

The next result, which follows from Theorem~\ref{h-measure}, gives an unexpected connection between the eventual outer connectivity of $U$ and the size of the images $U_n$.

\begin{theorem}\label{main5}
Let $f$ be a {\tef} with a {\mconn} {\wand} $U$. If~$U$ has
eventual outer connectivity~$2$, then the limit~$b$ defined in
Theorem~\ref{main2c} is finite.
\end{theorem}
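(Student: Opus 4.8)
The plan is to argue by contrapositive: assuming $b=\infty$, I will show that $U$ cannot have eventual outer connectivity~$2$, i.e. that $c(U_n^+)=\infty$ for all large~$n$. The link between the hypothesis $b=\infty$ and the geometry of the $U_n$ is supplied by Theorem~\ref{h-measure}: part~(d)(ii) tells us that if $b=\infty$ then $\omega(z,\partial_\infty U,U)=0$ for every $z\in U$. Intuitively this says that the outer boundary component $\partial_\infty U$ is ``invisible'' from inside $U$ — its harmonic measure vanishes — which can only happen if $U$ wraps around in a highly multiply connected way near its outer boundary. So the first step is to translate $\omega(\cdot,\partial_\infty U_n,U_n)=0$ (which holds for all large $n$, since $b$ is the same for every $U_n$ and $\partial_\infty U_n$ is the part of $\partial U_n$ where $h$ is large) into a statement about connectivity of $U_n^+$.

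The second step is the key geometric input. If $c(U_n^+)$ were finite for some large~$n$, then $U_n^+=U_n\cap\{|z|>r_n\}$ has finitely many complementary components in the relevant sense, and in particular $U_n$ has only finitely many complementary components lying in $\{|z|>r_n\}$; since the unbounded complementary component of $U_n$ is one of them, the outer part of $U_n$ is a finitely connected domain whose outer boundary $\partial_\infty U_n$ is then a single Jordan-type boundary component with positive harmonic measure in that subdomain. I would make this precise by using Theorem~\ref{main2a}: for large~$n$ the iterates of a compact set land in $C_n\subset B_n$, and since $B_n=A(r_n^{a_n},r_n^{b_n})$ is a genuine round annulus inside $U_n$, the domain $U_n^+$ contains the round annulus $A(r_n,r_n^{b_n})$. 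If $U_n^+$ is finitely connected, an elementary harmonic-measure estimate (comparison with an annulus, or Beurling-type projection) shows $\omega(z,\partial_\infty U_n,U_n)\ge\omega(z,\partial_\infty U_n,U_n^+)>0$ for $z$ in this annulus — contradicting Theorem~\ref{h-measure}(d)(ii). Hence $c(U_n^+)=\infty$ for all large~$n$, so the eventual outer connectivity is~$\infty$, not~$2$.

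The main obstacle I anticipate is the second step: carefully relating the harmonic measure $\omega(z,\partial_\infty U,U)$, which is taken in the full multiply connected domain $U$, to harmonic measure in the outer sub-piece $U^+$, and ruling out pathologies where $\partial_\infty U$ has positive harmonic measure in $U$ despite $U^+$ being finitely connected — or conversely where finite outer connectivity is somehow compatible with $\omega(z,\partial_\infty U,U)=0$. The cleanest route is probably to avoid $U^+$ altogether and instead use the monotonicity of harmonic measure under domain inclusion directly: $U$ contains the round annulus $A_n:=A(r_n^{a_n},r_n^{b_n})=B_n$, and the part of $\partial B_n$ on $\{|z|=r_n^{b_n}\}$ is contained in $\overline{U}$; if the eventual outer connectivity is~$2$ then for large $n$ the component of $\C\setminus U_n$ meeting $\{|z|>r_n\}$ is unique, so $\partial_\infty U_n$ lies ``just outside'' $B_n$ and separates $B_n$ from $\infty$, forcing $\omega(z,\partial_\infty U_n,U_n)\ge\omega(z,\{|z|=r_n^{b_n}\},A(r_n^{a_n},r_n^{b_n}))$, which is a positive explicit quantity for $z\in B_n$. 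This lower bound, valid for all large~$n$ at every point of the nested annuli, contradicts $\omega\equiv0$, completing the proof. I would also invoke the remark after Theorem~\ref{main2c} to confirm that finiteness of $b$ is independent of the choice of $z_0$, so the statement is well posed regardless of which base point is used to define~$h$.
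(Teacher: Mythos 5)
Your argument is essentially the paper's proof read contrapositively: both hinge on Theorem~\ref{h-measure}(d), which forces $\omega(\cdot,\partial_{\infty}U,U)>0$ to imply $b<\infty$, combined with the observation that outer connectivity~$2$ makes $\partial_{\infty}U$ an isolated boundary continuum of positive harmonic measure. The paper disposes of that last positivity step by noting that outer connectivity~$2$ isolates $\partial_{\infty}U$ from the rest of $\partial U$ and citing \cite[Theorem~4.3.4(b)]{tR95}; this is cleaner than your proposed inequality $\omega(z,\partial_{\infty}U_n,U_n)\ge\omega(z,\{|z|=r_n^{b_n}\},B_n)$, which does not follow from monotonicity under inclusion since that circle is not contained in $\partial U_n$ (though a Perron-family comparison with $\max\bigl\{0,\log(|z|/r_n^{a_n})\bigr\}$ suitably normalised repairs it, using that all non-outer boundary components lie in $\{|z|\le r_n^{a_n}\}$).
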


Our results give detailed information about the structure of {\mcwd}s, and as one application we show that the property of having a {\mcwd} is stable under relatively small changes to the function.

\begin{theorem}\label{stable}
Let $f,g$ be transcendental entire functions and suppose that there exists $\alpha \in (0,1)$ such that
\[
 M(r,g-f) \leq  M(r,f)^{\alpha}, \;\;\mbox{for large } r.
\]
Then $g$ has a multiply connected {\wand} whenever $f$ does.
\end{theorem}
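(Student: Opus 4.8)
The plan is to use the fine structure of $f$ on its wandering domain $U$ --- the absorbing annuli, the monomial behaviour, and the invariant level curves --- together with the fact that on these annuli $g-f$ is negligible, to build for $g$ a chain of round annuli about $0$ that traps a multiply connected Fatou component of $g$. Fix $z_0\in U$ and, as in Theorems~\ref{main1a},~\ref{main2a} and~\ref{main2c}, let $h$, $r_n=|f^n(z_0)|$, $a_n\to a\in[0,1)$, $b_n\to b\in(1,\infty]$ and the annuli $C_n$ of~\eqref{Cn} be the associated objects for $f$. Since $U$ lies in the fast escaping set~\cite{RS05}, $\log r_n$ grows faster than any geometric progression, so in particular $\sum_n\delta_n<\infty$. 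By Theorem~\ref{main1} (cf.~\eqref{Bndef}) we have $a<1<b$; fix $\ell,L$ with $a<\ell<1<L<b$, so that $A(r_n^\ell,r_n^L)\subset C_n$ for all large $n$.

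First I would check that $g-f$ is negligible on $\bigcup_{n\ge N}C_n$. By~\eqref{min} (Theorem~\ref{main2b}), $\log m(|z|,f)=(1+o(1))\log M(|z|,f)$ uniformly for $z\in C_n$ as $n\to\infty$; since also $M(|z|,f)\to\infty$ there, the hypothesis $|g(z)-f(z)|\le M(|z|,f)^\alpha$ gives
\[
\left|\frac{g(z)}{f(z)}-1\right|\le\frac{M(|z|,f)^\alpha}{m(|z|,f)}=M(|z|,f)^{\alpha-1+o(1)}\longrightarrow 0\qquad\text{uniformly on }C_n.
\]
Writing $g=f\cdot(1+\psi_n)$ on $C_n$ with $\eta_n:=\sup_{C_n}|\psi_n|\to 0$ very rapidly, and using the monomial representation $f=q_n\phi_n^{\,d_n}$ on $C_n$ from Theorem~\ref{poly} (with $q_n>0$, $d_n\in\N$, $\phi_n$ conformal and $|\phi_n(z)|$ relatively close to $|z|$), we obtain $g=q_n\phi_n^{\,d_n}(1+\psi_n)$ on $C_n$; as $1+\psi_n$ omits $0$ and takes values in a half-plane, $g$ restricted to any circle in $C_n$ winds $d_n\ (\ge 1)$ times about $0$, exactly as $f$ does.

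Next I would transport the invariant level structure of $f$ (Theorem~\ref{curves}) to $g$. Combining this with the monomial form and the distortion estimate for $\phi_n$, one gets: for large $n$ and $a_n+2\pi\delta_n\le c_1\le c_2\le b_n(1-3\pi\delta_n)$,
\[
A\big(r_{n+1}^{c_1(1+\sigma_n)},r_{n+1}^{c_2(1-\sigma_n)}\big)\ \subset\ g\big(A(r_n^{c_1},r_n^{c_2})\big)\ \subset\ A\big(r_{n+1}^{c_1(1-\sigma_n)},r_{n+1}^{c_2(1+\sigma_n)}\big),
\]
where $\sigma_n\to 0$ and --- the crucial point --- $\sum_n\sigma_n<\infty$, since $\sigma_n$ is controlled by the relative distortion of $\phi_n$, which is $O(\delta_n)$, and by $\eta_n$. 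Now fix $n_0$ large, put $\Gamma_0:=A(r_{n_0}^\ell,r_{n_0}^L)\subset C_{n_0}$ and $W_n:=g^n(\Gamma_0)$. As $\prod_n(1\pm\sigma_n)$ converges, a straightforward induction shows (for $n_0$ large) that every $W_n$ surrounds $0$, lies in $C_{n_0+n}$, is sandwiched between round annuli whose exponents with respect to $r_{n_0+n}$ stay in a fixed compact interval $[\ell_*,L_*]\subset(a,b)$ with $\ell_*>0$, and is the image of $\Gamma_0$ under a monomial-like map $g^n$ of degree $d_{n_0}\cdots d_{n_0+n-1}\ge 1$. In particular $W_n\subset A(r_{n_0+n}^{\ell_*},r_{n_0+n}^{L_*})$ with $r_{n_0+n}^{\ell_*}\to\infty$, and $g^n$ maps the core circle of $\Gamma_0$ onto a closed curve in $W_n$ winding at least once about $0$.

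It remains to conclude. Fixing $p_2,p_3$ with $0<|p_2|<|p_3|<\infty$, for every $n$ with $r_{n_0+n}^{\ell_*}>|p_3|$ the map $g^n$ omits $0,p_2,p_3$ on $\Gamma_0$, so $\{g^n|_{\Gamma_0}\}$ is normal by Montel's theorem and $\Gamma_0\subset F(g)$. Let $V$ be the Fatou component containing $\Gamma_0$. If $V$ were \sconn\ then the closed disc $\overline D$ bounded by the core circle of $\Gamma_0$ would lie in $V$, so $0\in V$; but then, since for large $n$ the restriction of $g^n$ to $\partial D$ winds at least once about $0$ with image inside $A(r_{n_0+n}^{\ell_*},r_{n_0+n}^{L_*})$, the argument principle forces $g^n(V)\supset g^n(\overline D)$ to contain the disc $\{w:|w|<r_{n_0+n}^{\ell_*}\}$, whence $F(g)=\C$, contradicting $J(g)\ne\emptyset$. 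Hence $V$ is \mconn, and so, by the theorem of Baker~\cite{iB75} quoted in the introduction, $V$ is a \mcwd\ of $g$. The main obstacle is the summable-slop estimate: one must show that the errors $O(\delta_n)$ and $O(\eta_n)$ incurred in passing from $f$ (or $g$) to its pure-monomial approximation on each $C_n$ have finite total, so that the $g$-orbit of $\Gamma_0$ neither drifts out of the band of levels $[\ell_*,L_*]$ nor escapes the annuli $C_{n_0+n}$ on which the monomial description, and hence the negligibility of $g-f$, is valid; this requires the quantitative distortion bounds behind Theorems~\ref{poly} and~\ref{curves} together with the rapid growth of $\log r_n$.
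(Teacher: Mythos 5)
Your overall strategy is the same as the paper's: use the hypothesis together with the minimum--modulus estimate of Theorem~\ref{main2b} to show that $g$ maps a suitable annulus about $f^n(z_0)$ into the corresponding annulus at stage $n+1$, with multiplicative errors on the exponents whose product converges thanks to the rapid growth of $\log r_n$ (from \cite{RS00}), and then invoke Montel. However, your key quantitative claim --- the sandwich
\[
g\bigl(A(r_n^{c_1},r_n^{c_2})\bigr)\subset A\bigl(r_{n+1}^{c_1(1-\sigma_n)},r_{n+1}^{c_2(1+\sigma_n)}\bigr)
\quad\text{with } \sigma_n=O(\delta_n)+O(\eta_n),\ \sum_n\sigma_n<\infty
\]
--- is not justified, and this is a genuine gap rather than a detail. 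For the outer radius with a \emph{fixed} exponent $c_2=L>1$, the only available one-step upper bound on $M(r_n^{L},f)$ (interpolating Theorem~\ref{convex} against Theorem~\ref{cover}~(b)) is of order $r_{n+1}^{L(1+o(1))\,b_{n+1}/b_n}$, and the factor $b_{n+1}/b_n$ is not $1+O(\delta_n)$; it can even tend to infinity along a subsequence (Example~\ref{exa2}). Equivalently, in the monomial representation $f=q_{n,1}\phi_{n,1}^{d_{n,1}}$ of Theorem~\ref{poly}, the degree $d_{n,1}$ is pinned down only to within the factor $1/(1-a_n)$, so the induced one-step map on exponents is $c\mapsto 1+\theta_n(c-1)$ with $\theta_n$ possibly as large as $(1+O(\delta_n))/(1-a_n)$ --- not $1+O(\delta_n)$. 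For the inner radius with $c_1=\ell<1$, the lower bound you need is precisely the level-curve localization of Theorem~\ref{curves}, whose error there is $\max\{\delta_n,\,a_n-a\}$; the paper proves only that $a_n\to a$, with no rate, so $\sum_n(a_n-a)^{+}$ need not converge. In both cases the slop per step is governed by the drift of $a_n$ and $b_n$, not by $\delta_n$ and $\eta_n$.

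The paper's proof circumvents both difficulties without ever invoking Theorems~\ref{poly} or~\ref{curves}: it keeps the \emph{inner} exponent $c_n$ of the test annulus strictly above $1$, so that the lower bound $|g(z)|\ge m(|z|,f)-M(|z|,f)^{\alpha}\ge M(r_n^{c_n},f)^{1-2\eps_n}\ge M(r_n,f)^{c_n(1-2\eps_n)}\ge r_{n+1}^{c_n(1-2\eps_n)}$ follows from Theorem~\ref{convex} alone (convexity works in this direction only for exponents exceeding $1$), with per-step loss $\eps_n=O(1/\log r_n)$, which is summable; and it lets the \emph{outer} exponent $d_n$ scale like $\beta b_n\prod(1+2K\eps_m)$ rather than staying fixed, so that the uncontrollable ratio $b_{n+1}/b_n$ is absorbed into the definition of $d_{n+1}$ instead of being estimated. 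If you wish to keep a fixed band of exponents with lower end below $1$, you would have to track the exactly $f$-invariant level function $\tilde h_n$ of \eqref{hntilde} rather than raw exponents, which is a different and more delicate argument than the one you sketch. Your concluding Montel/argument-principle step is fine.
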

In particular, if $f$ has a multiply connected {\wand}, then so does $f+P$, for any polynomial $P$, and with a similar argument we can deduce that $f+R$, for any rational function $R$, has a {\mcwd} that satisfies properties~(a),~(b) and~(c) given before Theorem~\ref{main1a}. This answers a question of Zheng \cite[page~224]{jhZ10}.

We remark that many of the results of this paper extend, with minor changes, to {\tmf}s with a direct tract; see~\cite{BRS08} for the definition and properties of a function with a direct tract. As a first step, we note that a version of Theorem~A was proved by Zheng \cite{jhZ06} under the assumption that $f$ is a {\tmf} with finitely many poles and the proof of that result can be extended to the case of a {\tmf} with a direct tract. We do not pursue these generalisations here.

The structure of the paper is as follows. In Sections 2 and 3 we prove new results of a general nature which we need later in the paper but are also of independent interest. Section~2 includes a new convexity property of the maximum modulus of an entire function. Section~3 concerns entire functions defined on a large annulus whose images omit certain values. We show that if the image omits the unit disc, then the function behaves like a monomial inside the annulus and, under the weaker assumption that zero is omitted, we obtain strong covering results by using the hyperbolic metric.

In Section~4 we prove Theorems~\ref{main1a} and~\ref{main1}, and
in Section~5 we prove further results about the dynamical
properties of entire functions in multiply connected wandering
domains, in particular Theorem~\ref{main2a} and also
Theorems~\ref{main2b},~\ref{poly} and~\ref{cover}, from
which~\eqref{min},~\eqref{mon},~\eqref{cov} and~\eqref{con}
follow. In Section~6 we consider the geometric properties of
multiply connected wandering domains and prove
Theorem~\ref{main2c}.

Section~7 concerns the positive harmonic function~$h$; in this section we prove Theorem~\ref{h-measure} and Theorem~\ref{curves}, which describes properties of the level sets of $h$. In Section~8 we prove Theorem~\ref{main3} and Theorem~\ref{main4a}, which describes properties of the eventual outer and inner connectivity of {\mcwd}s, and also deduce Theorem~\ref{main5} from Theorem~\ref{h-measure}. In Section~9 we prove Theorem~\ref{stable} concerning the stability of {\mcwd}s. Finally, in Section~10, we construct new examples of entire functions with multiply connected {\wand}s. These contrast with earlier examples of such functions and show that several of our results cannot be strengthened.

\section{Properties of the maximum modulus function}
\setcounter{equation}{0}
In this section we give results related to the maximum modulus function. Recall that this is defined by
\( M(r,f) = \max_{|z|=r}|f(z)|,\)
which we sometimes shorten to $M(r)$ when there is no possibility of ambiguity.

To prove several of our theorems, we need the following result which is based on a technique due to
Eremenko~\cite{E} using Wiman-Valiron theory. We remark that~\cite{E} gives a special case of this
result but Eremenko's technique allows for the flexibility described
below; see \cite{RS09a} for more details and also \cite{BRS08} for a version of Wiman-Valiron theory which allows this technique to be applied to meromorphic functions with a direct tract. Here $M^n(r,f)$ denotes iteration of $M(r,f)$ with respect
to the first variable.

\begin{lemma}\label{Ere}
Let $f$ be a transcendental entire function and suppose that
$\eps>0$. There exists $R=R(f)> 0$ such that, if $r > R$, then there
exists
\[z'\in \{z : r\le |z| \le r(1 + \eps)\}\]
with
\[|f^n(z')| \ge M^n(r,f),\;\;\text{for}\; n\in \N,\]
and hence
\[
 M((1+\eps)r,f^n) \geq M^n(r,f),\;\; \text{for}\; n\in \N.
\]
\end{lemma}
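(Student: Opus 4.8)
The plan is to use Wiman–Valiron theory to locate, for each large $r$, a point near the circle $|z|=r$ where $|f(z)|$ is close to $M(r,f)$ and where, crucially, $f$ maps a small neighbourhood of that point in a way that dominates the growth of $M$. First I would recall the central Wiman–Valiron fact: if $r$ lies outside an exceptional set $E\subset(0,\infty)$ of finite logarithmic measure, and $z_r$ is a point with $|z_r|=r$ and $|f(z_r)|=M(r,f)$, then on a disc around $z_r$ of radius comparable to $r/\nu(r)$ (where $\nu(r)$ is the central index, which tends to infinity) the function $f$ behaves like the monomial $w\mapsto f(z_r)(w/z_r)^{\nu(r)}$. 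In particular, the image $f(\{|w-z_r|<r/\nu(r)^{1-o(1)}\})$ contains a disc about the origin of radius much larger than $M(r,f)$; more precisely it contains the annulus $\{|w|<M(r,f)^{1+o(1)}\}$, so certainly a full neighbourhood of the closed disc $\overline{\{|w|\le M(r,f)\}}$.

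The key step is then an inductive construction. Eremenko's observation is that one does not need the good behaviour at every radius simultaneously; instead one chases the orbit. Set $r_0=r$ and let $z^{(0)}=z_{r_0}$ be a maximum-modulus point, so $|f(z^{(0)})|=M(r_0,f)=:r_1$. Because $f$ applied to a suitable neighbourhood of $z^{(0)}$ covers a neighbourhood of $\overline{\{|w|\le r_1\}}$ (this is where the Wiman–Valiron covering statement is used, and it needs $r_0\notin E$), and because one can choose the maximum-modulus point $z^{(1)}=z_{r_1}$ at level $r_1$ to lie in that covered neighbourhood provided $r_1\notin E$, one finds a point in the original small neighbourhood of $z^{(0)}$ whose image under $f$ is $z^{(1)}$, hence whose image under $f^2$ has modulus $M(r_1,f)=M^2(r_0,f)=:r_2$. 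Iterating, and using at each stage that the relevant radius avoids $E$, one produces a nested sequence of preimage sets and hence a single point $z'$ with $|f^n(z')|\ge M^n(r,f)$ for all $n$. The subtlety is that $E$ has finite logarithmic measure but is not empty, so one cannot guarantee $r_0,r_1,r_2,\dots$ all avoid $E$. This is resolved by Eremenko's flexibility, emphasised in the statement of the lemma: the covering at level $r_k$ actually produces a whole interval of "good" target radii near $r_{k+1}$ (a covered annulus, not just a circle), so at each step one has room to pick the next radius $r_{k+1}$ slightly larger than $M(r_k,f)$, outside $E$, while still guaranteeing $|f^{k+1}(z')|\ge M^{k+1}(r_0,f)$ since $M$ is increasing; and the $\eps$ in the statement provides the slack needed at the initial radius $r_0$ to choose it outside $E$ within $[r,r(1+\eps)]$, and to absorb the small enlargements of the neighbourhoods at each step so that $z'$ stays in $\{r\le|z|\le r(1+\eps)\}$.

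Finally, the displayed inequality $M((1+\eps)r,f^n)\ge M^n(r,f)$ is immediate from the point statement: since $|z'|\le(1+\eps)r$ and $M(\cdot,f^n)$ is increasing, $M((1+\eps)r,f^n)\ge|f^n(z')|\ge M^n(r,f)$.

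The main obstacle I expect is bookkeeping the exceptional set $E$ across the whole orbit while keeping all the neighbourhoods inside the fixed annulus $\{r\le|z|\le r(1+\eps)\}$: one must verify that the diameters of the successive preimage neighbourhoods, pulled back to the starting scale, form a summable (or otherwise controllable) sequence so that the limit point $z'$ does not drift out of the annulus, and simultaneously that at each level there is genuinely enough covered target room to dodge $E$. Once the quantitative Wiman–Valiron estimates are in hand (radius of the good disc $\sim r/\nu(r)^{a}$ with $a<1$, and image covering $\{|w|\le M(r,f)^{1+o(1)}\}$, valid for $r\notin E$ with $\mathrm{logmeas}\,E<\infty$), these are routine but delicate verifications; I would cite \cite{RS09a} and \cite{E} for the precise form and focus the write-up on the inductive selection of radii.
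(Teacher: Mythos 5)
Your proposal is the standard Eremenko/Wiman--Valiron argument, which is exactly what the paper itself relies on: the paper gives no proof of this lemma, simply attributing it to Eremenko's technique in \cite{E} and referring to \cite{RS09a} for the details, so your route coincides with the intended one, and the inductive orbit-chasing with the flexibility to dodge the exceptional set is the right mechanism. One correction: the Wiman--Valiron covering statement does \emph{not} give that $f$ maps the disc about $z_r$ onto a neighbourhood of the closed disc $\{|w|\le M(r,f)\}$ --- on that disc $f$ behaves like $f(z_r)(z/z_r)^{\nu(r)}$ and so is everywhere large, and its image is an \emph{annulus} of the form $A\bigl(M(r,f)^{1-o(1)},M(r,f)^{1+o(1)}\bigr)$, omitting all small values. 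This misstatement is harmless for your argument, since all you need is that the image covers the circle $|w|=r_{k+1}$ for some admissible $r_{k+1}$ slightly above $M(r_k,f)$ and outside the exceptional set, which the annulus covering provides; but the claim should be stated in that corrected form.
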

Points $z'$ constructed using Eremenko's technique are particularly useful
and are sometimes referred to as {\it Eremenko points}.

Our proofs depend heavily on the following property of the maximum modulus function which is of independent interest; see~\cite[Lemma~2.2]{RS08} for the case $n=1$.
\begin{theorem}\label{convex}
Let $f$ be a {\tef}. Then there exists $R=R(f)>0$ such that, for all $0 < c' < 1 < c$ and all $n \in \N$,
\[
 \log M(r^c,f^n) \geq c\log M(r,f^n),\;\;\mbox{for } r \geq R,
\]
and
\[
\log M(r^{c'},f^n) \leq c'\log M(r,f^n),\;\;\mbox{for } r \geq R^{1/c'}.
\]
\end{theorem}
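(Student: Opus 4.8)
The plan is to reduce the statement to a convexity property of the function $t \mapsto \log M(e^t, f^n)$. Write $\psi_n(t) = \log M(e^t, f^n)$ for $t > 0$. Both inequalities we want are of the form $\psi_n(ct) \ge c\,\psi_n(t)$ or $\psi_n(c't) \le c'\,\psi_n(t)$, which is exactly the statement that $\psi_n(t)/t$ is eventually non-decreasing in $t$: indeed, if $\psi_n(t)/t$ is non-decreasing for $t \ge t_0$, then for $t = \log r \ge t_0$ and $c > 1$ we get $\psi_n(ct)/(ct) \ge \psi_n(t)/t$, i.e. $\log M(r^c, f^n) \ge c \log M(r, f^n)$; and for $c' < 1$, taking $t = \log r$ with $c't \ge t_0$ (i.e. $r \ge R^{1/c'}$) gives $\psi_n(c't)/(c't) \le \psi_n(t)/t$, the second inequality. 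So it suffices to show that $\psi_n(t)/t$ is eventually non-decreasing, with the threshold $t_0 = \log R$ independent of $n$.

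The classical fact (Hadamard three-circles theorem) is that $\psi_1(t) = \log M(e^t, f)$ is a convex function of $t$ on all of $\R$. For a \emph{transcendental} entire function $f$ one has in addition $\psi_1(t)/t \to \infty$ as $t \to \infty$, and since $\psi_1$ is convex this forces $\psi_1(t)/t$ to be non-decreasing for $t$ beyond some point $t_0 = \log R(f)$ — geometrically, a convex function whose average slope from the origin tends to infinity must eventually have the chord from $0$ lying below the graph, and in fact the difference quotient $\psi_1(t)/t$ is eventually monotone because the derivative $\psi_1'(t)$ dominates $\psi_1(t)/t$ once $\psi_1(t)/t$ exceeds, say, $\psi_1'(t_0)$. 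This is precisely the $n=1$ case recorded as \cite[Lemma~2.2]{RS08}. The work is to push this through the iterates uniformly.

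For the inductive step, suppose the result holds for $f^{n-1}$ with threshold $R$; I want it for $f^n = f \circ f^{n-1}$ with the \emph{same} $R$. The key observation is that $M(r, f^n) = M(\,M(r, f^{n-1}),\, f\,)$ (valid for large $r$, since $|f^{n-1}(z)|$ attains all values on a circle up to modulus $M(r,f^{n-1})$ by the maximum principle, so the maximum of $|f|$ over that image contains the circle of radius $M(r, f^{n-1})$). Hence $\psi_n = \psi_1 \circ \psi_{n-1}$. Now $\psi_n(t)/t = \bigl(\psi_1(\psi_{n-1}(t))/\psi_{n-1}(t)\bigr)\cdot\bigl(\psi_{n-1}(t)/t\bigr)$: the second factor is non-decreasing by the inductive hypothesis for $r \ge R$, and the first factor is $\psi_1(s)/s$ evaluated at $s = \psi_{n-1}(t)$, which is non-decreasing in $s$ for $s \ge \log R$ by the $n=1$ case — so I need $\psi_{n-1}(t) \ge \log R$, i.e. $M(r, f^{n-1}) \ge R$, which holds for $r \ge R$ after enlarging $R$ once (for a transcendental entire function $M(r, f^{n-1}) \ge M(r,f) \to \infty$, and $M(r,f^{n-1}) \ge M(r, f) \ge r$ for large $r$, so $M(R, f^{n-1}) \ge R$ is automatic). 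Since $\psi_{n-1}$ is itself non-decreasing, $t \mapsto \psi_{n-1}(t)$ maps $[\log R, \infty)$ into $[\log R, \infty)$, and $\psi_n(t)/t$ is a product of two non-decreasing non-negative functions on $t \ge \log R$, hence non-decreasing there. Translating back via $t = \log r$ gives both displayed inequalities.

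The main obstacle, and the only point requiring care, is the \emph{uniformity of the threshold} $R$ in $n$. This is exactly what the multiplicative decomposition $\psi_n(t)/t = \bigl(\psi_1(\psi_{n-1}(t))/\psi_{n-1}(t)\bigr)\cdot\bigl(\psi_{n-1}(t)/t\bigr)$ buys us: the threshold for $\psi_n$ is governed only by (i) the threshold $\log R$ for the $n=1$ monotonicity of $\psi_1(s)/s$ and (ii) the requirement $M(r, f^{n-1}) \ge R$, and the latter is implied by $M(r,f) \ge R$, i.e. by $r \ge R$ with $R$ chosen once at the start (using that $M(r,f)/r \to \infty$). Thus one fixes $R = R(f)$ so large that $\psi_1(s)/s$ is non-decreasing for $s \ge \log R$ and $M(R, f) \ge R$; then the induction runs with this single $R$ for all $n$, and the proof is complete. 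I would also remark that the same argument, with the reverse (convexity-based, eventually-decreasing difference-quotient is false here — but the chord inequality at a single point) would not be needed since both inequalities follow from the one monotonicity statement.
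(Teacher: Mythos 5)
Your overall strategy --- reducing both inequalities to the eventual monotonicity of $\psi_n(t)/t$ where $\psi_n(t)=\log M(e^t,f^n)$, handling $n=1$ by convexity plus $\psi_1(t)/t\to\infty$, and identifying the uniformity of the threshold in $n$ as the crux --- coincides with the paper's. But the inductive step rests on the identity $M(r,f^n)=M(M(r,f^{n-1}),f)$, and this identity is false in general; only ``$\le$'' holds. Your justification does not work: by the maximum principle, $M(r,f^n)=\max\{|f(w)|:w\in K\}$ where $K=f^{n-1}(\overline{D(0,r)})$, and although the compact connected set $K$ contains points of every modulus up to $M(r,f^{n-1})$, it need not contain the whole circle $\{|w|=M(r,f^{n-1})\}$, and in particular need not contain any point where $|f|$ attains, or comes close to, $M(M(r,f^{n-1}),f)$. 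For a concrete failure take $f(z)=e^{iz}$: then $M(r,f)=e^{r}$ (attained at $z=-ir$), so $M(M(r,f),f)=e^{e^{r}}$, whereas $\log M(r,f^2)=\max_{|z|=r}\bigl(-e^{-r\sin\theta}\sin(r\cos\theta)\bigr)<e^{r}$ strictly, since the two factors cannot be maximised at the same $\theta$. With only $\psi_n\le\psi_1\circ\psi_{n-1}$ available, your product decomposition of $\psi_n(t)/t$ yields an upper bound, which is the wrong direction, and the induction collapses.

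The missing lower bound is precisely the non-trivial input in the paper's proof: Lemma~2.1 (Eremenko's technique, resting on Wiman--Valiron theory) gives $M((1+\eps)r,f^n)\ge M^n(r,f)$, i.e.\ $\psi_n(t)\ge\psi_1^{\,n}(t/2)$ for large $t$, where $\psi_1^{\,n}$ denotes the $n$-th iterate. The paper then chains the ratios $\psi_1(s)/s$ along the orbit $t/2,\psi_1(t/2),\dots$ using the $n=1$ monotonicity, and combines this with the easy upper bound $\psi_n(t_0)\le\psi_1^{\,n}(t_0)$ to get $\psi_n(t)/t\ge\psi_n(t_0)/t_0$ for $t\ge T$ with $T$ independent of $n$; convexity of each $\psi_n$ then delivers the monotonicity you want. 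To repair your argument you must import Lemma~2.1 (or an equivalent Wiman--Valiron statement) in place of the false composition identity; it cannot be obtained from the maximum principle alone.
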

\begin{proof}
First put $\phi(t)=\log M(e^t)$, $t\in \R$. Then $\phi(t)/t\to\infty$ as $t\to\infty$, since $f$ is transcendental, so we can take $t_0$ and $t_1$, with $t_1\ge t_0>0$, so large that
\begin{equation}\label{phi1}
\phi(t_0)>0\quad\text{and}\quad\frac{\phi(t)}{t}\ge \frac{\phi(t_0)}{t_0},\;\;\text{for } t\ge t_1.
\end{equation}
Let $\phi'$ denote the right derivative of $\phi$. Then
\begin{equation}\label{phi2}
\phi'(t)\ge \frac{\phi(t)}{t},\;\;\text{for } t\ge t_1,
\end{equation}
since, by the convexity of $\phi$ and \eqref{phi1},
\[
\phi'(t)\ge\frac{\phi(t)-\phi(t_0)}{t-t_0}\ge \frac{\phi(t)}{t},\;\;\text{for } t\ge t_1.
\]
The inequality~\eqref{phi2} implies that
\begin{equation}\label{phi3}
\frac{\phi(t)}{t}\;\text{is increasing}\;\text{for } t\ge t_1,
\end{equation}
and hence that
\[
\phi(ct)\ge c\phi(t), \;\;\text{for }t\ge t_1\text{ and } c>1.
\]
This inequality is the case $n=1$ of the required result.

Now put $\phi_n(t)=\log M(e^t,f^n)$, for $t\in \R$ and $n\in \N$.
Then $\phi_1=\phi$ and for all $n\in\N$ the function $\phi_n$ is
convex. To prove the inequality in the first statement of the lemma we need to find $T$ so large that
\[
\phi_n(ct)\ge c\phi_n(t), \;\;\text{for }t\ge T,\; c>1 \text{ and } n\in\N.
\]
By arguing as we did with $\phi$ above, it is sufficient to find
$t_0$ and $T$, with $T\ge t_0$, so large that, for all $n\in\N$,
\begin{equation}\label{phi4}
\phi_n(t_0)>0\quad\text{and}\quad\frac{\phi_n(t)}{t}\ge \frac{\phi_n(t_0)}{t_0},\;\;\text{for } t\ge T.
\end{equation}
To do this we first choose $t_0>0$ so that
\begin{equation}\label{phi5}
\frac{\phi(t)}{t}>2\quad\text{and}\quad \frac{\phi(t)}{t}\text{ is increasing},\;\;\text{for }t\ge t_0/2,
\end{equation}
which is possible by \eqref{phi3}, and
\begin{equation}\label{phi6}
\phi_n(t)\ge \phi^n(t/2), \;\;\text{for }t\ge t_0,
\end{equation}
which is possible by Lemma~\ref{Ere}. The first part of
\eqref{phi4} follows immediately from \eqref{phi6} and the first part of
\eqref{phi5}. Then we use the fact that
$\phi(t)/t\to\infty$ as $t\to\infty$ to choose $T\ge 2t_0$ such
that
\begin{equation}\label{phi7}
\frac{\phi(t)}{t}\ge 2\frac{\phi(t_0)}{t_0},\;\;\text{for } t\ge T/2.
\end{equation}
Then, for $t\ge T$, we have $\phi^n(t/2)\ge \phi(t/2)\ge t\ge T\ge t_0$ for $n\in\N$ by the first part of \eqref{phi5}, so we obtain, by \eqref{phi6}, the second part of \eqref{phi5} and \eqref{phi7},
\begin{align}
\frac{\phi_n(t)}{t}&\ge\frac12\,\frac{\phi^n(t/2)}{t/2}\notag\\
&=\frac12\,\frac{\phi(\phi^{n-1}(t/2))}{\phi^{n-1}(t/2)}\,\cdots\,\frac{\phi(\phi(t/2))}{\phi(t/2)}\,\frac{\phi(t/2)}{t/2}\notag\\
&\ge \frac12\,\frac{\phi(\phi^{n-1}(t_0))}{\phi^{n-1}(t_0)}\,\cdots\,\frac{\phi(\phi(t_0))}{\phi(t_0)}\,2\frac{\phi(t_0)}{t_0}\notag\\
&=\frac{\phi^n(t_0)}{t_0}\ge \frac{\phi_n(t_0)}{t_0}\notag,
\end{align}
which gives the second part of \eqref{phi4}. This proves the first inequality in the statement of the lemma and the second inequality follows immediately.
\end{proof}
{\it Remark} \;We often use the following special case of Theorem~\ref{convex}:
\begin{equation}\label{2r}
\log M(2r,f^n)\ge (1+\log 2/\log r)\log M(r,f^n),\;\;\text{for }r\ge R, n\in \N.
\end{equation}
%Theorem~\ref{convex} has the following useful corollary.
%
%\begin{corollary}\label{max}
%Let $f$ be a {\tef}. Then there exists $R>0$ such that, for all $r'>r \geq R$,
%\[
%\frac{ \log M^n(r',f)}{ \log M^n(r,f)} \geq \frac{\log r'}{\log r},
%\]
%and hence
%\[
%\frac{ M^n(r',f)}{ M^n(r,f)} \to \infty \mbox{ as } n \to \infty.
%\]
%\end{corollary}

\section{Properties of functions which omit values in an annulus}\label{omit}
\setcounter{equation}{0}

In this section we prove a number of theorems concerning the behaviour of an entire function which omits certain values in an annulus. The motivation for proving these results originated with questions concerning multiply connected wandering domains, but the results should have much wider applications.

Our first aim is to show that if the image of a large annulus under an entire function omits the unit disc, then the function behaves like a monomial inside that annulus. A key step in proving this is the following result which is related to~\cite[Lemma 5 part~(a)]{RS09} and also \cite[Lemma 2]{aH94a}. We remark that Theorem~\ref{Har} is the only result from this section needed for the proofs of Theorems~\ref{main1a},~\ref{main1} and~\ref{main2a}.

\begin{theorem}\label{Har}
Let $g$ be analytic in $A(r^a,r^b)$, for some $r>1$,
$0<a<b$, and let
\begin{equation}\label{mbig1}
m(\rho,g)> 1,\;\;\text{for } \rho\in (r^a,r^b).
\end{equation}
\begin{itemize}
\item[(a)]
If $\eps \in (\pi/\log r,(b-a)/2)$, then
 \[
 \log m(\rho,g)\ge \left(1-\frac{2\pi}{\eps \log r}\right)\log M(\rho,g)>0,
 \;\;\text{for } \rho\in [r^{a+\eps},r^{b-\eps}].
 \]
\item[(b)]  In particular, if $\eps = 2\pi\delta$, where $\delta = 1/\sqrt{\log r}< \min\{2,(b-a)/(4\pi)\}$, then
  \[
 \log m(\rho,g)\ge \left(1-\delta\right)\log M(\rho,g)>0,
 \;\;\text{for } \rho\in [r^{a+2\pi\delta},r^{b-2\pi\delta}].
 \]
 \end{itemize}
\end{theorem}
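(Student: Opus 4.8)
The plan is to pass to logarithmic coordinates and apply the Poisson–Jensen type estimate (or the Borel–Carathéodory inequality) to control the real part of $\log g$ on concentric circles inside the annulus. First I would set $w = \log z$, so that the annulus $A(r^a,r^b)$ corresponds to the vertical strip $S = \{w : a\log r < \operatorname{Re} w < b\log r\}$, periodic of period $2\pi i$ in the imaginary direction. Since $g$ omits the unit disc in $A(r^a,r^b)$ — precisely, $m(\rho,g)>1$ for $\rho\in(r^a,r^b)$, i.e. $|g|>1$ throughout — the function $u = \log|g|$ is a \emph{positive} harmonic function on the annulus, equivalently on the strip $S$. (Note $g$ itself need not have a single-valued logarithm, but $u=\log|g|$ is well-defined and harmonic, which is all we need; there is no need to take a branch of $\arg g$.) The goal is then the Harnack-type comparison: for $\rho\in[r^{a+\eps},r^{b-\eps}]$, the minimum of $u$ on $|z|=\rho$ is at least $(1 - 2\pi/(\eps\log r))$ times the maximum of $u$ on $|z|=\rho$.

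The key step is a quantitative Harnack inequality on the strip, exploiting that we are a distance at least $\eps\log r$ from each edge. Fix $\rho = r^s$ with $a+\eps \le s \le b - \eps$, and consider two points $z_1, z_2$ on the circle $|z| = \rho$; in strip coordinates these are $w_j = s\log r + i\theta_j$. I would compare $u(w_1)$ and $u(w_2)$ using the Poisson kernel for a disc of radius $\eps\log r$ centred at $w_1$ (which lies inside $S$ by the choice of $s$), together with the positivity of $u$. Harnack on a disc of radius $\varrho$ gives, for a point at distance $t<\varrho$ from the centre,
\[
\frac{\varrho - t}{\varrho + t}\, u(\text{centre}) \le u(\text{point}) \le \frac{\varrho + t}{\varrho - t}\, u(\text{centre}).
\]
Here, because of $2\pi i$-periodicity, any two points on the vertical line $\operatorname{Re} w = s\log r$ can be joined by a chain within $S$, but it is cleanest to take $t = \pi$ (half the period suffices by symmetry, or one bridges with a short chain) and $\varrho = \eps\log r$, yielding
\[
\frac{\eps\log r - \pi}{\eps\log r + \pi} \le \frac{u(w_1)}{u(w_2)} \le \frac{\eps\log r + \pi}{\eps\log r - \pi}.
\]
Taking the infimum over one variable and supremum over the other on the circle $|z|=\rho$ then gives
\[
\log m(\rho,g) \ge \frac{\eps\log r - \pi}{\eps\log r + \pi}\,\log M(\rho,g),
\]
and since $\eps\log r > \pi$ by hypothesis, $\frac{\eps\log r - \pi}{\eps\log r + \pi} = 1 - \frac{2\pi}{\eps\log r + \pi} \ge 1 - \frac{2\pi}{\eps\log r}$, which is the asserted bound; positivity of $\log M(\rho,g)$ is immediate from $m(\rho,g)>1$. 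Part~(b) is just the substitution $\eps = 2\pi\delta$ with $\delta = 1/\sqrt{\log r}$: then $2\pi/(\eps\log r) = 1/(\delta\log r) = \delta$, and the side conditions $\delta<2$ and $\delta < (b-a)/(4\pi)$ ensure $\eps \in (\pi/\log r, (b-a)/2)$ as required for part~(a).

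The main obstacle I anticipate is making the periodicity bookkeeping rigorous: the disc of radius $\eps\log r$ around $w_1$ used for Harnack must genuinely lie in $S$ (fine, by $a+\eps\le s\le b-\eps$), but two arbitrary points on the circle $|z|=\rho$ need \emph{not} both lie in a single such disc, so one must either chain several Harnack steps around the circle — which would degrade the constant — or, better, use the Poisson representation of $u$ on the whole strip (or on the annulus directly) rather than on a subdisc. The cleaner route is probably to write down the Poisson kernel for the annulus $A(r^a,r^b)$, or equivalently use a conformal map of a sub-strip to a disc, and estimate the kernel ratio directly at radius $s\log r$; the distance $\eps\log r$ to the boundary then enters through a single explicit kernel bound and no chaining is needed, giving exactly the factor $1 - 2\pi/(\eps\log r)$ without loss. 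Comparing with the cited results \cite[Lemma~5(a)]{RS09} and \cite[Lemma~2]{aH94a} should confirm that this is the intended argument and pin down the sharp constant.
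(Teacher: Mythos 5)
Your argument is correct and is essentially the paper's own proof: both pass to logarithmic coordinates, observe that $u=\log|g|$ is positive harmonic in the strip $\{t: a\log r<\Re t<b\log r\}$, and apply Harnack's inequality a single time on a disc of radius $\eps\log r$ to two points whose lifts differ in imaginary part by at most $\pi$, giving the factor $(\eps\log r-\pi)/(\eps\log r+\pi)\ge 1-2\pi/(\eps\log r)$ and then part~(b) by substituting $\eps=2\pi\delta$. The worry in your final paragraph is unnecessary: since $U(t)=u(e^t)$ is $2\pi i$-periodic, any two points on the circle $|z|=\rho$ have lifts with $|\Im t_1-\Im t_2|\le\pi<\eps\log r$, which is exactly the observation the paper uses, so no chaining and no Poisson kernel for the strip or annulus is needed.
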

\begin{proof}
By~\eqref{mbig1}, the function $u(z)=\log|g(z)|$ is positive harmonic in $A(r^a,r^b)$, so $U(t)=u(e^t)$ is positive harmonic in the strip
\[
S=\{t:a\log r <\Re(t)<b\log r\}.
\]
Now, since $\eps < (b-a)/2$, we have $(a + \eps)\log r < (b - \eps)\log r$. Thus if $t_1$ and $t_2$ satisfy $(a + \eps) \log r \le\Re(t_1)=\Re(t_2)\le (b - \eps)\log r$ and $|\Im(t_1)-\Im(t_2)|\le \pi$, then $\{t: |t-t_1| < \eps\log r\}\subset S$ and $|t_2-t_1|\le \pi< \eps \log r$. So
\[
\frac{\eps \log r -\pi}{\eps \log r +\pi}\le
\frac{U(t_2)}{U(t_1)}\le\frac{\eps \log r +\pi}{\eps \log r
-\pi}\,,
\]
by Harnack's inequality; see~\cite[Theorem~1.3.1]{tR95}. Therefore,
if $z_1$ and $z_2$ satisfy $r^{a+\eps}\le|z_1|=|z_2|\le r^{b-\eps}$,
then
\[
\frac{1-\pi/(\eps \log r)}{1+\pi/(\eps \log r)}\le
\frac{u(z_2)}{u(z_1 )}\le\frac{1+\pi/(\eps \log r)}{1-\pi/(\eps
\log r)}\,.
\]
Part (a) follows from the left-hand inequality. Part (b) follows from part~(a).
\end{proof}

We now show that, if the image of a large annulus under an entire function omits the unit disc, then the function behaves like a monomial inside the annulus. First recall \cite[Section~1.2]{nS93} that a continuous map $f:U\to V$, where $U$ and $V$ are domains, is called \emph{proper} if for each compact subset $C$ of $V$ the inverse image $f^{-1}(C)$ is also compact. We use this concept at several points in the paper. For a proper analytic function $f:U\to V$ there exists a positive integer $d$, called the \emph{degree} of $f$,
such that each point in $V$ has exactly $d$ preimages in $U$, counted according to multiplicity.
In particular, $f(U)=V$.

The Riemann-Hurwitz formula \cite[Section~1.3]{nS93} states that for a proper analytic function $f:U\to V$ of degree $d$, we have \begin{equation}\label{RieHur}
c(U)-2 = d (c(V)-2)+N,
\end{equation}
where $N$ is the number of critical points of $f$ in $U$, counted according to multiplicity. Here it is understood that $c(U)=\infty$ if and only if
$c(V)=\infty$; this case can be deduced from the case when $c(U)$ is finite by exhausting $U$ by finitely connected domains of increasing connectivity.

We note that if an analytic function $f:U\to V$ extends
continuously to the closure of $U$ (with respect to the
Riemann sphere), then $f$ is proper if and only if
$f(\partial U)=\partial V$. This implies that if $U$ is a bounded Fatou component
of an entire function $f$, then $f^n:U\to f^n(U)$ is proper, for $n \in \N$.
In particular this applies to multiply connected {\wand}s of entire functions.

\begin{theorem}\label{pol}
Let $f$ be a {\tef} such that, for some $n\in\N$, $r>1$ and $0<a<1<b$,
\begin{equation}\label{mbig}
m(\rho,f^n)> 1,\;\;\text{for } \rho\in (r^a,r^b).
\end{equation}
and suppose that $\delta=1/\sqrt{\log r}< \min\{1,(b-a)/(4\pi+5\pi b)\}$.

There exists $R=R(f)>1$ such that if $r^a \ge R$, then the preimage under $f^n$ of $A(M(r^a,f^n),M(r^{(b-2\pi\delta)(1-\delta)},f^n))$ has a {\dconn} component $A_n$ such that
\[
A(r^{a+2\pi\delta},r^{b(1-3\pi\delta)})\subset A_n\subset A(r^a,r^b)
\]
and $f^n$ has no critical points in $A_n$. Moreover, for $r^a\ge R$, we can write $f^n(z)=P_n(\phi_n(z))$, $z\in A_n$, where
\begin{itemize}
\item[(a)]
$\phi_n$ is a conformal map on $A_n$ which satisfies
\begin{equation}\label{modulus}
|z|^{1-\delta-2\pi\delta\log r/\log |z|}\le |\phi_n(z)|\le |z|^{1+2\delta},\;\;\text{for } r^{a+4\pi\delta}\le |z|\le r^{b(1-5\pi\delta)};
\end{equation}
\item[(b)]
$P_n(z)=q_nz^{d_n}$, where $q_n>0$ and the degree $d_n$ is the number of zeros of~$f^n$ in $\{z:|z|<r\}$, which satisfies
\[
 (1-2\delta)\frac{\log M(r,f^n)}{\log r}\le d_n\le \frac{\log M(r,f^n)}{(1-a)\log r}.
 \]
\end{itemize}
\end{theorem}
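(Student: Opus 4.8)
The plan is to combine Theorem~\ref{Har} (which gives $\log m \approx \log M$ on a slightly shrunk annulus) with a logarithmic change of variable to identify the covering structure of $f^n$ over a suitable target annulus, and then to use the Riemann--Hurwitz formula to show this covering piece is doubly connected and critical-point-free. First I would apply Theorem~\ref{Har}(b) with $g=f^n$ on $A(r^a,r^b)$: using~\eqref{mbig} and the hypothesis on $\delta$, we get $\log m(\rho,f^n)\ge (1-\delta)\log M(\rho,f^n)>0$ for $\rho\in[r^{a+2\pi\delta},r^{b-2\pi\delta}]$. Since $\log M(\rho,f^n)$ is increasing in $\rho$, this means that on the circle $|z|=r^{a+2\pi\delta}$ the function $|f^n|$ exceeds $M(r^a,f^n)$ (after adjusting by the elementary estimate $\log M(r^{a+2\pi\delta},f^n)\ge\log M(r^a,f^n)$, using Theorem~\ref{convex} or \eqref{2r} with $R=R(f)$ chosen so that $r^a\ge R$), while on the circle $|z|=r^{b-2\pi\delta}$ the function $|f^n|$ exceeds $M(r^{(b-2\pi\delta)(1-\delta)},f^n)$ by the Harnack estimate again; these two facts mean $A(r^{a+2\pi\delta},r^{b-2\pi\delta})$ maps into $A(M(r^a,f^n),\infty)$ and separates the two boundary circles of the target annulus $A(M(r^a,f^n),M(r^{(b-2\pi\delta)(1-\delta)},f^n))$ in the appropriate way.

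Next I would let $A_n$ be the connected component of $(f^n)^{-1}\big(A(M(r^a,f^n),M(r^{(b-2\pi\delta)(1-\delta)},f^n))\big)$ that contains the intermediate circle, say $|z|=r$. By construction $A_n\subset A(r^a,r^b)$ (the boundary moduli of the target force any preimage point in $A_n$ to have modulus strictly between $r^a$ and $r^b$, since outside that range $|f^n|$ is too small or, near $r^b$, we need the minimum-modulus lower bound to push $|f^n|$ above the outer radius), and $A_n\supset A(r^{a+2\pi\delta},r^{b(1-3\pi\delta)})$ — the inner containment comes directly from the previous paragraph, and the outer radius $r^{b(1-3\pi\delta)}$ (rather than $r^{b-2\pi\delta}$) is obtained by noting $b(1-3\pi\delta)\le b-2\pi\delta$ for small $\delta$, with the slack absorbing the distortion between $b-2\pi\delta$ and $(b-2\pi\delta)(1-\delta)$ in the exponent of the outer target radius. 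Restricted to $A_n$ the map $f^n:A_n\to A(M(r^a,f^n),M(r^{(b-2\pi\delta)(1-\delta)},f^n))$ extends continuously to $\overline{A_n}$ and sends $\partial A_n$ to the boundary of the target annulus, hence is proper, of some degree $d_n$.

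Now apply the Riemann--Hurwitz formula~\eqref{RieHur} to this proper map: $c(A_n)-2 = d_n(c(V)-2)+N$ with $c(V)=2$, giving $c(A_n)=2$ and $N=0$, so $A_n$ is doubly connected and $f^n$ has no critical points in $A_n$ — this is the clean structural payoff. Because $A_n$ is a critical-point-free doubly connected cover of an annulus, standard theory (lift via logarithm, or directly) lets us write $f^n = P_n\circ\phi_n$ on $A_n$ with $P_n(w)=q_nw^{d_n}$, $q_n>0$, and $\phi_n$ conformal from $A_n$ onto a round annulus; one checks $q_n>0$ and the interpretation of $d_n$ as the number of zeros of $f^n$ in $|z|<r$ via the argument principle applied to $f^n$ on $|z|=r$. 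The estimates in (a) on $|\phi_n(z)|$ then follow by comparing $\log|\phi_n(z)|=(1/d_n)\log(|f^n(z)|/q_n)$ with $\log|z|$, using the two-sided bound $\log m(|z|,f^n)\le\log|f^n(z)|\le\log M(|z|,f^n)$ from Theorem~\ref{Har}, the convexity estimates of Theorem~\ref{convex} relating $\log M(|z|,f^n)$ to $(\log|z|/\log r)\log M(r,f^n)$, and the bounds on $d_n$ in (b); the restricted range $r^{a+4\pi\delta}\le|z|\le r^{b(1-5\pi\delta)}$ is exactly the sub-annulus where Theorem~\ref{Har} gives Harnack control with enough room for the error terms. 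For (b), the lower bound $d_n\ge(1-2\delta)\log M(r,f^n)/\log r$ comes from evaluating $|f^n|$ at an Eremenko point (Lemma~\ref{Ere}) near $|z|=r$ where $|f^n|\ge M(r,f^n)$ combined with $|f^n(z)|=q_n|\phi_n(z)|^{d_n}$ and the upper bound on $|\phi_n|$; the upper bound $d_n\le\log M(r,f^n)/((1-a)\log r)$ comes from the fact that on $|z|=r$ we have $|f^n(z)|\ge m(r,f^n)>1$ so $q_n>$ something, combined with $|f^n(z)|\le M(r,f^n)$ on $|z|=r$ and the lower bound on $|\phi_n(z)|$ at radius close to $r^a$, forcing $q_n(r^a)^{(1-\delta)d_n}\lesssim M(r^a,f^n)\le M(r,f^n)^{?}$ — more cleanly, $d_n$ equals the winding number of $f^n$ around $0$ over $|z|=r^a$, which is bounded using $\log M(r^a,f^n)\le a\log M(r,f^n)$ (Theorem~\ref{convex}) and the lower modulus bound, yielding the stated inequality.

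The main obstacle I expect is the careful bookkeeping of exponents: keeping the various shrunk radii $r^{a+2\pi\delta}$, $r^{b-2\pi\delta}$, $r^{(b-2\pi\delta)(1-\delta)}$, $r^{b(1-3\pi\delta)}$, $r^{a+4\pi\delta}$, $r^{b(1-5\pi\delta)}$ consistently ordered and verifying that the hypothesis $\delta<(b-a)/(4\pi+5\pi b)$ is precisely what is needed to make all the required inequalities between these exponents hold simultaneously (so that, e.g., the inner radius of $A_n$ sits below the outer radius, and the distortion range in (a) is nonempty and contained in $[r^{a+2\pi\delta},r^{b-2\pi\delta}]$). The conceptual steps — Harnack via Theorem~\ref{Har}, properness, Riemann--Hurwitz, monomial factorisation — are each short; the work is in the constants.
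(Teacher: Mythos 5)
Your overall strategy is the paper's: Theorem~\ref{Har} for the $m/M$ comparison, the preimage component $A_n$ of the target annulus pinned between the barrier circles $|z|=r^a$ and $|z|=r^{b-2\pi\delta}$, Riemann--Hurwitz, and the monomial factorisation via a uniformising conformal map. However, two steps as written do not go through. The first is the Riemann--Hurwitz step: with $c(V)=2$ the formula \eqref{RieHur} reads $c(A_n)-2=N\ge 0$, which gives only $c(A_n)\ge 2$; you cannot conclude ``$c(A_n)=2$ and $N=0$'' from it. One must first prove $c(A_n)=2$ by a separate argument, and only then does Riemann--Hurwitz yield $N=0$. The missing idea is exactly where the hypothesis \eqref{mbig} enters: any bounded complementary component of $A_n$ other than the one containing $\{z:|z|\le r^a\}$ lies in $A(r^a,r^b)$ and must contain a zero of $f^n$ (it is mapped onto the disc $\{w:|w|\le M(r^a,f^n)\}$, which contains $0$), contradicting the fact that $f^n$ omits $0$ on $A(r^a,r^b)$. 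Without this, neither the double connectivity nor the absence of critical points is established.

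The second problem is that your derivations of (a) and (b) are mutually circular: you propose to prove the bound \eqref{modulus} on $|\phi_n|$ using the bounds on $d_n$ from (b), and then to prove the lower bound on $d_n$ using the upper bound on $|\phi_n|$ from (a). The paper breaks this loop in both places. For (a) it applies Theorem~\ref{Har} directly to $\phi_n$ itself (which is zero-free with $|\phi_n|>r^a>1$ on $A_n$), so each circle $|z|=r^c$ is mapped into a thin round annulus, and then compares the conformal moduli of the ring domains $A_n\cap A(r^a,r^c)$ with those of their $\phi_n$-images; no information about $d_n$ is used. For (b) it uses Jensen's formula for the logarithmic mean $\mu(\rho,f^n)$ together with $n(t)=d_n$ for $r^a\le t\le r$ (argument principle plus absence of zeros), and Theorems~\ref{Har} and~\ref{convex}; this also avoids evaluating the factorisation on $|z|=r$, which need not lie in the range $[r^{a+4\pi\delta},r^{b(1-5\pi\delta)}]$ where \eqref{modulus} is asserted (the hypotheses do not force $a+4\pi\delta<1$). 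You would need to restructure parts (a) and (b) along some such non-circular lines.
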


\begin{proof}
First note that $g=f^n$ and $\delta=1/\sqrt{\log r}$ satisfy the hypotheses of Theorem~\ref{Har} part~(b).

Consider the annulus
\begin{equation}\label{C}
C = A\left(r^{a+2\pi\delta},r^{b(1-3\pi\delta)}\right).
\end{equation}
We claim that if $R=R(f)>1$ is a constant suitable for Theorem~\ref{convex}, then for $r^a\ge R$ there is a doubly connected set $A_n\supset C$ such that $f^n$ is a proper map of $A_n$ onto the annulus
\[
 A'_n = A(M(r^a,f^n),M(r^{(b-2\pi\delta)(1-\delta)}),f^n).
\]
It follows from the Riemann-Hurwitz formula that $f^n$ has no critical points in~$A_n$.

To prove the claim, we first show that, for $r^a\ge R$, we have $f^n(C) \subset A'_n$. This is true because if $z \in C$, then
\[
|f^n(z)|\le M(r^{b(1-3\pi\delta)},f^n)< M(r^{(b-2\pi\delta)(1-\delta)},f^n),
\]
since $b>1$, and
\[
|f^n(z)|\ge M(r^{a+2\pi\delta},f^n)^{1-\delta}\ge
M(r^{(a+2\pi\delta)(1-\delta)},f^n)> M(r^a,f^n),
\]
since $0<a<1$, by \eqref{mbig}, Theorem~\ref{Har} part~(b) and Theorem~\ref{convex}.

We now define $A_n$ to be the component of $(f^n)^{-1}(A'_n)$ that contains $C$. Then
\begin{equation}\label{inner1}
f^n(\{z:|z|=r^a\})\subset \{z:|z|\le M(r^a,f^n)\}
\end{equation}
and
\begin{align}\label{outer1}
f^n(\{z:|z|=r^{b-2\pi\delta}\})&\subset \{z:|z|\ge
M(r^{b-2\pi\delta},f^n)^{(1-\delta)}\}\notag\\
&\subset \{z:|z|\ge M(r^{(b-2\pi\delta)(1-\delta)},f^n)\},
\end{align}
by \eqref{mbig}, Theorem~\ref{Har} part (b) and Theorem~\ref{convex}.
Thus
\begin{equation}\label{nested}
C\subset A_n\subset  A(r^a,r^{b-2\pi\delta})\subset A(r^a,r^b)
\end{equation}
and $A_n$ is doubly connected since any complementary component of $A_n$ must contain a zero of $f^n$ and there are no zeros of $f^n$ in $A(r^a,r^b)$. This proves the claim.

Next we take $\phi_n$ to be the conformal mapping of $A_n$ onto an annulus of the form $A(r^{a},r^{b'_n})$, where $b_n'>a$, with the property that
%\[
%\phi_n(r)=r^{c_n}, \;\;\text{where }a<c_n<b'_n,
%\]
the inner and outer boundary components of $A_n$ map onto the inner and outer boundary components of $A(r^{a},r^{b'_n})$ respectively, and $\phi_n(r)$ is positive.

%note that the modulus of $A_n$ lies between that of the annulus $A(r^a,r^b)$ (which is $(1/2\pi)\log r^{b-a}$) and that of $C$ (which is $(1/2\pi)\log r^{b-a-3b\pi\delta-2\pi\delta}$); Thus there is a conformal mapping $\phi_n$ of $A_n$ onto an annulus of the form $A(r^{A_n},r^{b'})$, where
%\begin{equation}\label{bminusa}
%b(1-3\pi\delta)-(a+2\pi\delta)\le b'-A_n\le b-a,
%\end{equation}

Since $r^a\ge R>1$, we deduce, by Theorem~\ref{Har} part~(b),~\eqref{C} and~\eqref{nested}, that
\[
|\phi_n(z_1)|\ge |\phi_n(z_2)|^{1-\delta}, \;\;\text{for }|z_1|=|z_2|=r^{c},\;a+4\pi\delta\le c\le b(1-5\pi\delta);
\]
note that $b(1-5\pi\delta)>a+4\pi\delta$ since $\delta<(b-a)/(4\pi+5\pi b)$. Therefore, if $R_{c}=M(r^{c},\phi_n)$, $c>0$, then
\begin{equation}\label{rho1}
\phi_n(\{z:|z|=r^{c}\})\subset \overline{A}(R_{c}^{1-\delta},R_{c}),\;\;\text{for }a+4\pi\delta\le c\le b(1-5\pi\delta).
\end{equation}

Recall that the modulus of a ring domain is preserved under a conformal mapping and is monotonic under containment, and that the modulus of $A(r_1,r_2)$ is $(1/2\pi)\log r_2/r_1$; see \cite[Chapter~4]{A}, for example. We apply these facts to $A_n(c)=A_n\cap A(r^a,r^{c})$, where $a+4\pi\delta\le c\le b(1-5\pi\delta)$, which is a ring domain by \eqref{C} and \eqref{nested}. We deduce by \eqref{nested} and \eqref{rho1} that
\[
\frac{1}{2\pi}\log\left(\frac{r^{c}}{r^{a+2\pi\delta}}\right)\le {\rm mod}\,(A_n(c))\le \frac{1}{2\pi}\log\left(\frac{r^{c}}{r^a}\right),
\]
and
\[
\frac{1}{2\pi}\log\left(\frac{R_{c}^{1-\delta}}{r^a}\right)\le {\rm mod}\,(\phi_n(A_n(c)))\le\frac{1}{2\pi}\log\left(\frac{R_{c}}{r^a}\right).
\]
%\[
%\frac{R_t^{1-\delta}}{r^{1/(1-\delta)}}\le\frac{R_t^{1-\delta}}{R_1}\le r^{t-1}\le \frac{R_t}{R_1^{1-\delta}}\le \frac{R_t}{r^{1-\delta}},
%\]
Thus, since $0<\delta<1$,
\begin{equation}\label{R-rho}
r^{c-2\pi \delta}\le R_{c}\le
r^{c(1+2\delta)},\;\;\text{for }a+4\pi\delta\le c\le b(1-5\pi\delta).
\end{equation}
Hence, by \eqref{rho1},
\[
r^{(c-2\pi \delta)(1-\delta)}\le |\phi_n(z)|\le r^{c(1+2\delta)},\;\;\text{for }r^{a+4\pi\delta}\le |z|=r^{c}\le r^{b(1-5\pi\delta)},
\]
which gives \eqref{modulus}.

So $f^n$ can be factorised as the conformal mapping $\phi_n$
followed by a proper map $P_n$ from the annulus
$A(r^{a},r^{b'_n})$ to the annulus $A'_n$. Since $P_n$ maps the
outer/inner boundary components of $A(r^{a},r^{b'_n})$ onto the
outer/inner boundary components respectively of $A'_n$, we deduce
that the bounded harmonic function $\log |P_n(z)|$ takes constant
values on $\{z:|z|=r^{a}\}$ and on $\{z:|z|=r^{b'_n}\}$.
Choose $d_n>0$ and $q_n>0$ such that $d_n \log|z| + \log q_n$
takes the same values as $\log |P_n(z)|$ on these circles. Then, since the
solution to the Dirichlet problem is unique,
$\log|P_n(z)| =  d_n \log|z| + \log q_n$, for $z\in A(r^{a},r^{b'_n})$, so
$|P_n(z)|=q_n|z|^{d_n}$ in this annulus. Hence $P_n(z)= \alpha q_n z^{d_n}$ for some constant $\alpha$ such that $|\alpha|=1$, and by suitably normalizing $\phi_n$ we may assume that $\alpha=1$.

To estimate the degree $d_n$ we consider the mean
\[
\mu(\rho,f^n)=\frac{1}{2\pi}\int_0^{2\pi}\log |f^n(\rho e^{i\theta})|\,d\theta, \;\;\rho>0.
\]
By \eqref{mbig} and Theorem~\ref{Har},
%$T$ is log-linear on $(r^a,r^b)$, say $T(\rho)=\lambda \log \rho+\mu$, and
\begin{equation}\label{T(r)}
(1-\delta)\log M(\rho,f^n)\le \mu(\rho,f^n)\le\log M(\rho,f^n), \;\;\text{for }\rho\in[r^{a+2\pi\delta},r^{b-2\pi\delta}].
\end{equation}
We now use the following consequence of Jensen's theorem (see
\cite[Section~3.61]{eT39}):
\begin{equation}\label{Jensen}
\mu(r,f^n)=\mu(\rho,f^n)+\int_{\rho}^r\frac{n(t)}{t}\,dt, \;\text{
for } \rho\in (0,r),
\end{equation}
where $n(t)$ is the number of zeros of $f^n$ in $\{z:|z|\le t\}$. By the representation
\[
f^n(z)=P_n(\phi_n(z))=q_n(\phi_n(z))^{d_n},\;\;\mbox{for }z\in A_n,
\]
and the argument principle, we deduce that $n(t)\le d_n$, for $0<t\le r$, and $n(t)=d_n$, for $r^a\le t\le r$. Thus, by \eqref{Jensen},
\begin{equation}\label{T(r)ineq}
\mu(r^a,f^n)+(1-a)d_n\log r= \mu(r,f^n)\le \mu(R,f^n)+d_n\log r;
\end{equation}
 recall that $r^a\ge R=R(f)$, where $R(f)>1$ is a constant suitable for Theorem~\ref{convex}. Since $\mu(r^a,f^n)\ge 0$, by~\eqref{mbig}, the equality in \eqref{T(r)ineq} and the right-hand inequality in \eqref{T(r)} give
\[
d_n\le \frac{\mu(r,f^n)}{(1-a)\log r}\le \frac{\log M(r,f^n)}{(1-a)\log r}.
\]
On the other hand, if $r$ is so large that $\log R/\log r\le 1/\sqrt{\log r}=\delta$, then, by the right-hand inequality in \eqref{T(r)} and Theorem~\ref{convex},
\[
\frac{\mu(R,f^n)}{\log r}\le \frac{\log M(R,f^n)}{\log r}\le \left(\frac{\log R}{\log r}\right)\frac{\log M(r,f^n)}{\log r}\le \delta\, \frac{\log M(r,f^n)}{\log r}.
\]
 So, by the left-hand inequality in \eqref{T(r)} and the inequality in \eqref{T(r)ineq},
\[
 (1-2\delta)\frac{\log M(r,f^n)}{\log r}\le \frac{\mu(r,f^n)-\mu(R,f^n)}{\log r}\le d_n.
 \]
%We now use the representation $f^n(z)=P(\phi_n(z))=q(\phi_n(z))^p$ and the estimate $\phi_n(z)=z^{1+O(\delta)}$ in the annulus $A(r^{1-c},r^{1+c})$, together with Lemma~\ref{Harnack} again and Theorem~\ref{convex}, to give
%\begin{equation}\label{deg1}
%g(r)=q\phi_n(r)^p=q r^{p(1+O(\delta))}=M(r)^{1+O(\delta)}
%\end{equation}
%and
%\begin{equation}\label{deg2}
%|g(r^{1+c})|=|q\phi_n(r^{1+c})^p|=|q r^{p(1+c)(1+O(\delta))}|\ge M(r)^{(1+c)(1+O(\delta))}.
%\end{equation}
%We also have
%\begin{equation}\label{deg3}
%|g(z)|\ge 2,\;\;\text{for }|z|=r^{a+2\pi\delta},\;\;\text{and hence}\;\; T(r^{a+2\pi\delta})\ge 0.
%\end{equation}
%The estimates \eqref{deg1} and \eqref{deg2} give
%\[
%r^{pc(1+O(\delta))}\ge M(r)^{c(1+O(\delta))}\;\;\text{and hence}\;\; p\ge (1+O(\delta))\frac{\log M(r)}{\log r}.
%\]
%In the other direction, \eqref{deg1} and \eqref{deg3} give
%\[
%p=\lambda(1+O(\delta))\le \frac{T(r)(1+O(\delta))}{\log r-(a+2\pi\delta)\log r}\le \left(1+O(\delta)\right)\frac{\log M(r)}{(1-a)\log r}.
%\]
This completes the proof.
\end{proof}

Next we give two covering theorems, which are based on a much weaker assumption than that in Theorems~\ref{Har} and~\ref{pol}. We show that if an entire function omits the value~$0$ in an annulus (rather than omitting the unit disc there), then the image of that annulus must cover a large annulus (much larger than the annulus given by Theorem~\ref{pol}). However, under this weaker hypothesis we cannot assert that the {\ef} behaves like a monomial within the annulus.

The proofs of these two covering theorems use the contraction property of the hyperbolic metric; see \cite[Theorem~4.1]{CG}, for example. We denote the density of the hyperbolic metric at a point $z$ in a hyperbolic domain $G$ by $\rho_G(z)$
and the hyperbolic distance between $z_1$ and $z_2$ in $G$ by
$\rho_G(z_1,z_2)$.

\begin{theorem}\label{Kiel}
There exists an absolute constant $\delta>0$ such that if $f:A(R,R')\to \C\setminus\{0\}$ is analytic, where $R'>R$, then for all
$z_1,z_2 \in A(R,R')$ such that
\begin{equation}\label{cond11}
 \rho_{A(R,R')}(z_1,z_2)<\delta\quad\text{and}\quad
|f(z_2)|\ge 2|f(z_1)|,
\end{equation}
we have
\[
f(A(R,R'))\supset \overline{A}(|f(z_1)|,|f(z_2)|).
\]
\end{theorem}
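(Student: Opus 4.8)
The statement is a covering theorem of Landau–Bloch type, but adapted to an annular domain: if $f$ maps an annulus $A(R,R')$ into the punctured plane $\C\setminus\{0\}$, then from the mere existence of two nearby points (in the hyperbolic metric of the annulus) at which the moduli of the values differ by a factor of at least $2$, we can force the image to contain the entire round annulus spanning those two moduli. My plan is to pass to a universal covering / logarithmic picture, use the contraction property of the hyperbolic metric to get a genuine Euclidean disc on which a branch of $\log f$ omits a suitable set, and then run a monodromy / Bloch-type argument to show the image of $\log f$ must contain a full horizontal strip, which translates back to the desired round annulus.

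\smallskip

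\textbf{Step 1 (pass to the logarithm).} Since $f$ omits $0$ on the annulus, locally we may write $g=\log f$; globally $g$ is defined on the universal cover of $A(R,R')$, but it is single-valued on any simply connected subdomain. Fix $z_1,z_2$ as in \eqref{cond11}, so $\Re g(z_2)-\Re g(z_1)\ge \log 2$. We want to produce, for each value $s$ with $\Re g(z_1)\le s\le \Re g(z_2)$ and each circle $|z|=\rho$ hitting the relevant region, a point where $|f|=e^s$; more precisely we want to cover the closed round annulus $\overline A(|f(z_1)|,|f(z_2)|)$, i.e. we want $g$ to take, on $A(R,R')$, every value whose real part lies in $[\Re g(z_1),\Re g(z_2)]$ — that automatically gives the conclusion by exponentiating. (Strictly: we need, for each target $w\in\overline A(|f(z_1)|,|f(z_2)|)$, a preimage in $A(R,R')$; lifting $w$ to $\log w$ and covering a strip in the $g$-picture does this.)

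\smallskip

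\textbf{Step 2 (extract a Euclidean disc via hyperbolic contraction).} By the contraction property of the hyperbolic metric (\cite[Theorem~4.1]{CG}), $g=\log f$ contracts the hyperbolic metric of $A(R,R')$ to the hyperbolic metric of $\C\setminus(2\pi i\Z)$ — equivalently, using a standard comparison, $f$ itself is hyperbolically $1$-Lipschitz into $\C\setminus\{0\}$. The point of choosing $\delta>0$ \emph{absolute} is this: in the domain $A(R,R')$ take the point $z_1$, and consider the hyperbolic ball $B=B_{A(R,R')}(z_1,\delta)$. Its image under $g$ lies in the hyperbolic $\delta$-ball of the target around $g(z_1)$, and — because the target's hyperbolic metric near $g(z_1)$ is comparable to the Euclidean one once we are bounded away from $2\pi i\Z$ — we get a concrete Euclidean disc $\Delta$ centred at $g(z_1)$ on which the relevant branch of the inverse is controlled; equivalently $g(B)\subset\Delta$ for an explicit $\Delta$. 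Since $g(z_2)\in g(B)$ and $\Re g(z_2)-\Re g(z_1)\ge\log 2$, the disc $\Delta$ must have radius $\gtrsim\log 2$. The key gain is that $B$ is simply connected, so on $B$ the function $g$ is single-valued and we may apply classical one-variable covering theorems to $g|_B$.

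\smallskip

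\textbf{Step 3 (monodromy / Bloch-type covering in the $g$-picture).} On the simply connected domain $B$ we have a single-valued analytic $g$ with two points whose $g$-images have real parts differing by $\ge\log 2$; moreover $g$ avoids the lines $\Im w\in 2\pi\Z$ shifted appropriately — actually $g$ avoids nothing in particular on $B$, but its \emph{exponential} $f$ avoids $0$, which is automatic. The decisive point is that $f$ is defined on the \emph{full} annulus $A(R,R')$, which is not simply connected; so as we analytically continue $g$ around the annulus it can only change by integer multiples of $2\pi i$. This forces the image $g(A(R,R'))$ to be invariant under $w\mapsto w+2\pi i$, hence to be a union of $2\pi i$-translates of a fundamental piece; combined with the fact that a segment of length $\ge\log 2$ of real parts is already hit inside the single disc $\Delta$, a connectedness argument shows $g(A(R,R'))$ contains the whole vertical strip $\{w:\Re g(z_1)\le\Re w\le\Re g(z_2)\}$. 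Exponentiating yields $f(A(R,R'))\supset\overline A(|f(z_1)|,|f(z_2)|)$, as required. The constant $\delta$ is whatever absolute bound is needed in Step 2 for the comparison between hyperbolic and Euclidean metrics in the target to be valid with the stated loss.

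\smallskip

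\textbf{Main obstacle.} The delicate part is Step 2–3: making the passage from "hyperbolically $\delta$-close in the annulus" to "a Euclidean disc on which a single-valued branch of $\log f$ behaves like a Bloch function" genuinely quantitative and \emph{uniform} (absolute $\delta$), and then correctly exploiting the annular, not disc, topology so that the image is $2\pi i$-periodic and hence fills an entire strip rather than merely a disc. One must be careful that the hyperbolic ball $B_{A(R,R')}(z_1,\delta)$ may "wrap around" the annulus when $A(R,R')$ is thin — but that only helps, since wrapping forces exactly the $2\pi i$-periodicity we want; when $A(R,R')$ is fat, $B$ is an honest Euclidean-ball-like region and the classical argument applies. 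Reconciling these two regimes with one absolute $\delta$ is the technical heart of the proof.
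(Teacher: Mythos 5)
Your argument does not go through, and the gap is structural rather than technical. In Step 3 you claim that, because the monodromy of $\log f$ around the annulus is an integer multiple of $2\pi i$, the image $g(A(R,R'))$ is invariant under $w\mapsto w+2\pi i$ and hence fills a vertical strip. This fails whenever the winding number of $f$ about $0$ along a core curve of $A(R,R')$ is zero (for instance $f(z)=e^z$ on a thick annulus, where the hypotheses of the theorem are easily satisfied): then $\log f$ is single-valued on all of $A(R,R')$ and its image has no periodicity whatsoever. Even granting periodicity, a connected, $2\pi i$-periodic open set whose real parts sweep out $[\Re g(z_1),\Re g(z_2)]$ need not contain the strip (a thin periodic tube around a slanted curve joining $w_0$ to $w_0+2\pi i$ is a counterexample), so the ``connectedness argument'' does not close. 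Similarly, in Step 2 the containment $g(B)\subset\Delta$ is an upper bound coming from Pick's theorem and yields no covering; the classical covering theorems you could invoke on $B$ (Bloch, Landau) require a derivative normalisation you do not have. What your Steps 2--3 actually establish is only that $|f|$ takes every value in $[|f(z_1)|,|f(z_2)|]$ along a path from $z_1$ to $z_2$, which is much weaker than covering the round annulus.

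The missing idea --- and the one the paper uses --- is to argue by contradiction with a \emph{second} puncture. Suppose some $w_0\in\overline{A}(|f(z_1)|,|f(z_2)|)$ is omitted; then $f$ maps $A(R,R')$ into $\C\setminus\{0,w_0\}$, and Pick's theorem gives $\rho_{\C\setminus\{0,w_0\}}(f(z_1),f(z_2))\le\rho_{A(R,R')}(z_1,z_2)<\delta$. After rescaling by $w_0$ this is the hyperbolic distance in $\C\setminus\{0,1\}$ between two points $t_1,t_2$ with $|t_1|\le 1\le |t_2|$ and $|t_2|\ge 2|t_1|$; any geodesic joining them must cross the compact annulus $\overline{A}(\tfrac{1}{2},2)$, on which the density of the hyperbolic metric of $\C\setminus\{0,1\}$ is bounded below by an absolute constant, forcing the distance to exceed an absolute $\delta>0$ --- a contradiction. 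The hypothesis that $f$ omits only $0$ gives no lower bound on any hyperbolic density and therefore can never, by itself, produce the covering; the hypothetically omitted value $w_0$ is essential, and your proposal never introduces it.
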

\begin{proof}
Let $A_0 = A(R,R')$ and suppose that \eqref{cond11} holds for a value of $\delta$ to be chosen. Suppose also, for a contradiction, that $w_0\in
\overline{A}(|f(z_1)|,|f(z_2)|)\setminus f(A_0)$. By Pick's theorem,
\begin{align}
\rho_{A_0}(z_1,z_2)&\ge \rho_{f(A_0)}(f(z_1),f(z_2))\notag\\
&\ge \rho_{\C\setminus \{0,w_0\}}(f(z_1),f(z_2))\notag\\
&=\rho_{\C\setminus \{0,1\}}(f(z_1)/w_0,f(z_2)/w_0).\notag
\end{align}
Let $\gamma$ be a hyperbolic geodesic  in $\C\setminus \{0,1\}$ from
$t_1=f(z_1)/w_0$ to $t_2=f(z_2)/w_0$. Then choose $\gamma'$ to be
a segment of $\gamma$ which joins $t'_1$ to $t'_2$, where
$|t'_2|=2|t'_1|$ and  $1\in A(|t'_1|,2|t'_1|)$. This is possible
by the second inequality in \eqref{cond11}.

Then $t'_1, t'_2\in \overline{A}(\frac12,2)$. The density of the
hyperbolic metric on $\C\setminus\{0,1\}$ is bounded below on
$\overline{A}(\frac12,2)$ by an absolute constant, say $2\delta>0$. Hence
\[
\rho_{A_0}(z_1,z_2)\ge \rho_{\C\setminus\{0,1\}}(t'_1,t'_2)\ge
\delta,
\]
which contradicts the first inequality in \eqref{cond11} with this value of~$\delta$.
\end{proof}

We now apply Theorem~\ref{Kiel} to the iterates of an entire function to obtain the following covering result. By considering an entire function that behaves in an annulus like a monomial, it can be seen that the result is close to best possible.

\begin{theorem}\label{parta}
Let $f$ be a {\tef}. There exist $R_0=R_0(f)> 0$ and an absolute constant
$K>1$ such that if
\begin{equation}\label{annuli1a}
 0 \notin f^n(A(R,R')), \;\;\mbox{for some } n \in \N,
\end{equation}
where
\begin{equation}\label{annuli2}
 R'/R \geq K^2\;\; \mbox{and}\;\; R \geq R_0,
\end{equation}
then
\begin{equation}\label{annuli3}
f^n(A(R,R')) \supset A(M(R,f^n),M^n(R'/K,f)).
\end{equation}
\end{theorem}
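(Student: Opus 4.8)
The plan is to realise the target annulus as a finite union of closed round annuli, each produced by one application of Theorem~\ref{Kiel} to $g=f^n$ on the ring $A(R,R')$, which is legitimate since \eqref{annuli1a} gives $0\notin f^n(A(R,R'))$. Write $\delta>0$ for the absolute constant of Theorem~\ref{Kiel}. I want a path $\gamma$ in $A(R,R')$ from a point $p$ with $|f^n(p)|\le M(R,f^n)$ to a point $q$ with $|f^n(q)|\ge M^n(R'/K,f)$, together with parameters $\tau_0<\tau_1<\dots<\tau_N$ with $\gamma(\tau_0)=p$, $|f^n(\gamma(\tau_N))|\ge M^n(R'/K,f)$, and
\[
\rho_{A(R,R')}\bigl(\gamma(\tau_{j-1}),\gamma(\tau_j)\bigr)<\delta,\qquad |f^n(\gamma(\tau_j))|\ge 2\,|f^n(\gamma(\tau_{j-1}))|,\qquad j=1,\dots,N.
\]
Applying Theorem~\ref{Kiel} to each consecutive pair gives $f^n(A(R,R'))\supset\overline A(|f^n(\gamma(\tau_{j-1}))|,|f^n(\gamma(\tau_j))|)$; since consecutive closed annuli share an endpoint their union is $\overline A(|f^n(p)|,|f^n(\gamma(\tau_N))|)$, and this contains the open annulus $A(M(R,f^n),M^n(R'/K,f))$ because $|f^n(p)|\le M(R,f^n)$. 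Everything thus reduces to constructing $\gamma$ and the chop points.

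The two endpoints are the routine part. For $q$ I use Eremenko's Lemma~\ref{Ere}: fix a small absolute $\eps>0$, take the absolute constant $K>1+\eps$ and $R_0\ge R(f)$; since $R'/R\ge K^2$ we have $r:=R'/K\ge KR>R(f)$ and $(1+\eps)r<R'$, so Lemma~\ref{Ere} yields $q\in A(R,R')$ with $R'/K\le|q|\le(1+\eps)R'/K$ and $|f^n(q)|\ge M^n(R'/K,f)$. For $p$ I take a radius $r_0$ slightly larger than $R$ and let $p$ be a point of minimal modulus on $|z|=r_0$: since $f^n$ is transcendental it is not a rational function, so $|f^n|$ is non-constant on $|z|=R$, whence $m(R,f^n)<M(R,f^n)$, and by continuity $|f^n(p)|=m(r_0,f^n)\le M(R,f^n)$ once $r_0$ is close enough to~$R$. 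Enlarging $R_0$, Theorem~\ref{convex} and Lemma~\ref{Ere} force $M^n(R'/K,f)\ge 2M(R,f^n)$, so $N\ge1$; fixing the numerical value of the absolute constant $K$ so that $K>1+\eps$ and all the inclusion and doubling inequalities used below hold is elementary.

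The heart of the argument, and the step I expect to be the main obstacle, is the construction of the chain joining $p$ to $q$. The difficulty is a genuine tension: to make $|f^n(p)|$ as small as $M(R,f^n)$ the point $p$ must lie very close to $|z|=R$, where the hyperbolic density
\[
\rho_{A(R,R')}(z)=\frac{\pi}{L\,|z|\,\sin\!\bigl(\pi\log(|z|/R)/L\bigr)},\qquad L=\log(R'/R),
\]
is comparable to $1/\operatorname{dist}(z,\partial A(R,R'))$, whereas in the bulk of the ring it is only of order $1/L$; hence no single hyperbolic step of length $<\delta$ can reach from near $|z|=R$ out to $q$, and a chain is unavoidable. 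My plan is to take $\gamma$ to be a path of monotone, rapid ascent of the harmonic function $u=\log|f^n|$ (well defined and harmonic on $A(R,R')$ because $f^n$ is zero-free there): $u$ must rise by the huge amount $\Delta=\log M^n(R'/K,f)-\log m(r_0,f^n)$ along a path of total hyperbolic length only of order $\log L$, so along a hyperbolic geodesic joining $p$ and $q$ there is a point where $|\nabla u|$ measured in the hyperbolic metric exceeds $\Delta/(\text{length})\gg(\log 2)/\delta$; following steepest ascent of $u$ from that point — steering round the critical points of $f^n$ met on the way — keeps $u$ increasing fast enough along hyperbolic arclength that every sub-arc on which $|f^n|$ merely doubles has hyperbolic length below $\delta$, and chopping at successive doublings of $|f^n|$ produces the $\tau_j$. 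A short radial arc near $|z|=R$ links $p$ to the start of this ascent, $u$ having increased along it by only a controlled amount (by Theorem~\ref{convex} and Harnack's inequality for $u$ on round sub-rings on which $u>0$).

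The two remaining delicate points are: that the ascent really carries $u$ up to the level $\log M^n(R'/K,f)$, for which one uses $\sup_{A(R,R')}u\ge u(q)\ge\log M^n(R'/K,f)$ together with the fact that a steepest-ascent trajectory of a harmonic function either tends to the supremum of $u$ or leaves the ring through its outer boundary, combined once more with the Eremenko estimate to control $u$ on the exit; and the quantitative comparison of $\rho_{A(R,R')}$ near and away from $\partial A(R,R')$, which is where one pays the cost of the inner endpoint. Bounding $\Delta$ from below and the relevant hyperbolic lengths from above via Theorem~\ref{convex} and Lemma~\ref{Ere}, and checking the numerical inequalities on $K$ and $R_0$, is then routine.
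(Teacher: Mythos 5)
Your reduction of \eqref{annuli3} to a chain of applications of Theorem~\ref{Kiel} is sound in its bookkeeping (the union of the closed annuli over consecutive chop points is an annulus once the moduli increase, and your choice of the endpoints $p$ and $q$ via continuity of the minimum modulus and Lemma~\ref{Ere} is fine), but the step you yourself flag as the heart of the matter is a genuine gap, and it does not close as described. From the mean value estimate you only get \emph{one} point on the geodesic where the hyperbolic gradient of $u=\log|f^n|$ exceeds $\Delta/\ell$; there is no principle forcing the gradient to stay large along the steepest-ascent trajectory of a harmonic function. The trajectory can run into a near-critical plateau of $u$ (critical points of $u$ are the zeros of $(f^n)'$, which are not excluded by \eqref{annuli1a}), where a single doubling of $|f^n|$ costs hyperbolic length far exceeding $\delta$; then no admissible pair $(\gamma(\tau_{j-1}),\gamma(\tau_j))$ exists and the chain breaks, and ``steering round the critical points'' is not an argument -- it sacrifices exactly the monotone fast ascent you need. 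Two further points are also unjustified: the length bound ``of order $\log L$'' fails because $p$ must be taken at a distance from $|z|=R$ that depends on $f$ and $n$ (only then is $m(r_0,f^n)\le M(R,f^n)$), so its hyperbolic distance to the bulk of the ring is not controlled by absolute constants; and the Harnack patch for the radial arc needs $u>0$, i.e.\ $|f^n|>1$, which is not part of the hypothesis -- only the single value $0$ is omitted, so Theorem~\ref{Har} is not available here.

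For comparison, the paper's proof avoids any chain and in particular never links down to a point near $|z|=R$ with hyperbolic steps. It takes $z_1,z_2$ on the \emph{single} circle $|z|=4R'/K$ realising $m(4R'/K,f^n)$ and $M(4R'/K,f^n)$, and splits into two cases: if $m\ge\tfrac12 M$ on that circle, the covering down to modulus $M(R,f^n)$ follows purely topologically from $\partial f^n(A_0)\subset f^n(\partial A_0)$; if instead $M\ge 2m$, Theorem~\ref{Kiel} is applied exactly once to the pair $z_1,z_2$, which satisfies $\rho_{A_0}(z_1,z_2)<\delta$ simply because the whole hyperbolic circumference of that circle in $A(R,R')$ is small once the absolute constant $K$ is large; the same boundary argument then fills in down to $M(R,f^n)$, and Lemma~\ref{Ere} together with \eqref{2r} converts $\tfrac12 M(4R'/K,f^n)$ into $M^n(R'/K,f)$. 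If you want to salvage your approach, you should replace the steepest-ascent construction by this dichotomy (or some other device that removes the need for uniformly fast ascent), since as written the existence of your chop points is exactly what is not proved.
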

\begin{proof}
Suppose that $A_0 = A(R,R')$ is an annulus satisfying~\eqref{annuli1a} and~\eqref{annuli2} for some $K>4$. Take points $z_1, z_2$
satisfying
\[|z_1| = |z_2| = 4R'/K, \; |f^n(z_2)| = M(4R'/K,f^n) \mbox{ and } |f^n(z_1)| = m(4R'/K,f^n).
\]
We now consider different cases that might arise. First, suppose that
\[
|f^n(z_1)| \geq \frac12|f^n(z_2)|=\frac{1}{2} M(4R'/K,f^n).
\]
Since $\partial f^n(A_0) \subset f^n(\partial A_0)$ and $R'> 4R'/K$, it follows that
\begin{equation}\label{high}
f^n(A_0) \supset A(M(R,f^n), \frac{1}{2} M(4R'/K,f^n)).
\end{equation}

Now suppose that
\[
M(4R'/K,f^n)=|f^n(z_2)|\ge 2|f^n(z_1)|.
\]
By assumption, we have $4RK\le 4R'/K<R'$, and so if $K$ is a
sufficiently large absolute constant, then $d_{A_0}(z_1,z_2) <
\delta$, where $\delta$ is the constant given in
Theorem~\ref{Kiel}. Thus, by Theorem~\ref{Kiel},
\begin{equation}\label{low}
f^n(A_0) \supset \overline{A}(|f^n(z_1)|, |f^n(z_2)|) =\overline{A}(|f^n(z_1)|,
M(4R'/K,f^n)).
\end{equation}
If $|f^n(z_1)| \leq M(R,f^n)$, then it follows directly from~\eqref{low} that
\begin{equation}\label{lowa}
f^n(A_0) \supset A(M(R,f^n), M(4R'/K,f^n)).
\end{equation}
On the other hand, if $M(R,f^n) < |f^n(z_1)|$, then~\eqref{lowa}
also follows from~\eqref{low} using the fact that $\partial
f^n(A_0) \subset f^n(\partial A_0)$.

The conclusion now follows from \eqref{high} and \eqref{lowa} because
\[
\frac12 M(4R'/K,f^n)\ge \frac12 M^n(2R'/K,f)\ge M^n(R'/K,f),
\]
for $R'$ sufficiently large, by Lemma~\ref{Ere} and~\eqref{2r}.
\end{proof}

\section{Proofs of Theorems~\ref{main1a} and \ref{main1}}
\setcounter{equation}{0}
Let $f$ be a {\tef} with a {\mconn} {\wand} $U$, let $z_0 \in U$ and, for $n \in \N$, let $r_n = |f^n(z_0)|$.
In order to prove Theorems~\ref{main1a} and \ref{main1} we consider the functions defined by
\begin{equation}\label{hn}
 h_n(z) = \frac{\log |f^n(z)|}{\log r_n}, \; z \in \overline U, \; n \in \N.
\end{equation}
Without loss of generality, we can assume that, for $z \in \overline U$, $n \in \N$, we have $|f^n(z)|>1$ and hence $h_n$ is a positive harmonic function in $U$ with a continuous extension to $\overline U$.

Since $h_n(z_0) = 1$, for $n \in \N$, it follows from Harnack's
theorem \cite[Theorem~1.3.10]{tR95} that there exists a sequence
$(n_k)$ such that $h_{n_k}$ converges locally uniformly in $U$
and the function $h$ defined by
 \begin{equation}\label{hlimit}
 h(z) = \lim_{k \to \infty} h_{n_k}(z)
 \end{equation}
  is a positive harmonic function on $U$. In order to prove Theorem~\ref{main1a}, we must show that $h$ is non-constant and that the whole sequence $h_n$ converges to $h$ in~$U$.

\begin{lemma}\label{nonc}
Let $h$ be the harmonic function on $U$ defined by~\eqref{hlimit}. Then $h$ is non-constant.
\end{lemma}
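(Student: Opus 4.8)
The plan is to argue by contradiction: suppose $h$ is constant, so $h\equiv h(z_0)=1$ on $U$, which means $h_{n_k}(z)\to 1$ locally uniformly in $U$. The key idea is that this would force the orbits of \emph{all} points of $U$ to grow at essentially the same rate as the orbit of $z_0$, namely $\log|f^{n_k}(z)|\sim \log r_{n_k}$, and this contradicts the fact (Lemma~\ref{Ere}, Eremenko points) that some point near a circle produces iterated maximum modulus growth $M^{n}(r,f)$, which is vastly faster than $r_n=|f^n(z_0)|$ can be.

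First I would fix a round annulus $A=\{z:s\le|z-c|\le s(1+\eps)\}\subset U$ lying in the ``thick part'' of $U$ (possible since $U$ is multiply connected, after translating so that a core curve of $U$ is available; more precisely, choose an annulus inside $U$ whose core curve is not null-homotopic in $U$, using that $U$ is multiply connected). Apply Theorem~A (Zheng's result) to conclude that for large $n$, $U_n\supset f^n(A)\supset A(r_n^{(1)},R_n^{(1)})$ with $R_n^{(1)}/r_n^{(1)}\to\infty$. Better still, apply Lemma~\ref{Ere}: for suitable small $\eps$ there is an Eremenko point $z'$ with $r\le|z'|\le r(1+\eps)$ and $|f^n(z')|\ge M^n(r,f)$ for all $n$. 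If $z'$ can be taken in $U$ — or rather, if we track a circle $\{|z|=r\}$ that meets $U$ in a suitable arc — then $h_n(z')=\log|f^n(z')|/\log r_n\ge \log M^n(r,f)/\log r_n$. Now $\log M^n(r,f)/\log r_n\to\infty$, because $f^n(z_0)$ lies (for large $n$, by Baker's property that $0$ and $U_{n-1}$ lie inside $U_n$) in $U_n\subset\{|z|\le M(r_{n-1},f)\}$, whereas $\log M^n(r,f)$ grows by an extra composition at each step; comparing $\log r_n=\log|f^n(z_0)|\le \log M(r_{n-1},f)\le \cdots$ against $\log M^n(r,f)$ gives unbounded ratio. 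Hence $h_{n_k}(z')\to\infty$ along the subsequence, which is incompatible with $h_{n_k}\to 1$ if $z'\in U$.

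The subtlety — and I expect this to be \textbf{the main obstacle} — is that the Eremenko point $z'$ need not lie in $U$, and the annulus $A\subset U$ on which we have control under $f^n$ is a fixed annulus in the domain, not centred at $0$. To handle this I would instead argue at the level of the sequence $(h_n)$ directly on $U$: choose any point $\zeta\in U$ whose orbit can be shown to grow strictly faster (in the $\log\log$ sense) than that of $z_0$. Concretely, pick a closed curve $\sigma$ in $U$ that is not null-homotopic in $U$ and a subdomain $A\subset U$ containing $\sigma$; by Theorem~A, $f^n(A)\supset A(r_n',R_n')$ with $\log R_n'/\log r_n'\to\infty$. Since $f^n(A)\subset U_n$, the domain $U_n$ contains points of modulus as large as $R_n'$. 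Picking $\zeta_n\in A$ with $|f^n(\zeta_n)|\ge R_n'$ and then choosing (by compactness of a slightly smaller closed subannulus $A'\subset A$, and passing to a further subsequence) a single limit point $\zeta_\infty\in \overline{A'}\subset U$, one shows $h_{n_k}(\zeta_{n_k})\to\infty$; combined with local uniform convergence $h_{n_k}\to 1$ and an equicontinuity/Harnack argument near $\zeta_\infty$, this yields the contradiction. The technical care needed is exactly in pinning down that the ``fast'' points $\zeta_n$ do not all escape to $\partial U$, and in converting the modulus lower bound $R_n'$ into a lower bound for $h_{n_k}$ at an interior point; Harnack's inequality applied on a small fixed ball around $\zeta_\infty$ (together with $\log R_n'/\log r_n\to\infty$, which follows from $\log R_n'/\log r_n'\to\infty$ and $\log r_n'\le \log r_n+O(1)$ since both $f^n(z_0)$ and $A(r_n',R_n')$ sit in the bounded domain $U_n$ with $U_{n-1}\cup\{0\}$ inside it) does this.

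I would also record the cleaner alternative that avoids tracking moving points: because $U$ is multiply connected, Baker's theorem gives, for large $n$, that $U_{n}$ surrounds $U_{n-1}$ and $0$; hence $\{|z|=r_n\}$ separates the two boundary components of $U_n$ only if $r_n$ lies in the ``annular range'' of $U_n$, and Theorem~A shows this range $[r_n',R_n']$ has $\log R_n'/\log r_n'\to\infty$. If $h$ were constant, then $h_n\to 1$ uniformly on compacta would force, via the maximum principle on $U_n$ pulled back, the image $U_n$ to be squeezed into $\{|z|=r_n^{1+o(1)}\}$, directly contradicting that $U_n\supset A(r_n',R_n')$ with $R_n'=r_n'^{\,\omega(n)}$, $\omega(n)\to\infty$. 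Either route closes the argument; the honest difficulty in both is ensuring the ``fast growth'' is witnessed at points that stay inside $U$, which the compactness-plus-Harnack step resolves.
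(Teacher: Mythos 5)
There is a genuine gap, and it sits at the heart of both of your routes: you quote Theorem~A as giving an annulus $A(r_n',R_n')\subset f^n(A)$ with $\log R_n'/\log r_n'\to\infty$, but Theorem~A only gives $R_n'/r_n'\to\infty$. These are very different statements: the logarithmic version is essentially Theorem~\ref{main1}, which is proved \emph{downstream} of this lemma (so invoking it here would be circular), and it is not even true in the form you use it --- the paper's Example~\ref{exa1} and Example~\ref{exa3} produce wandering domains for which $\limsup_n \log R_n/\log r_n$ is arbitrarily close to~$1$. Your ``cleaner alternative'' in the last paragraph collapses for the same reason. A second warning sign is that your compactness-plus-Harnack step is self-defeating: Harnack's inequality already makes $(h_n)$ locally uniformly \emph{bounded} on compact subsets of $U$ (that is exactly how the convergent subsequence $(h_{n_k})$ was extracted), so points $\zeta_{n_k}$ lying in a fixed compact subset $\overline{A'}\subset U$ with $h_{n_k}(\zeta_{n_k})\to\infty$ cannot exist at all; if your construction produced them you would have contradicted Harnack, not the constancy of $h$. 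Finally, the claim $\log r_n'\le\log r_n+O(1)$ ``since both sit in $U_n$'' is unjustified --- nothing forces $f^n(z_0)$ and the Theorem-A annulus to be at comparable moduli within $U_n$.

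The correct repair is much closer to your first route than to your fallback, and it needs no contradiction argument. Theorem~A is used only to guarantee that, for large $m$, the annulus $f^m(A)\subset U_m$ has ratio bigger than $2(1+\eps)$; since $f^m:U\to U_m$ is onto, one can then choose $z_1,z_2\in U$ so that $f^m(z_1)$ lies near the inner edge of that annulus and $f^m(z_2)$ is an Eremenko point (Lemma~\ref{Ere}) at radius between $2|f^m(z_1)|$ and $2|f^m(z_1)|(1+\eps)$, hence still inside $U_m$. This resolves the ``Eremenko point must lie in $U$'' obstacle you correctly identified. The strict gain then comes not from any largeness of the annulus but from the convexity of $\log M$ (Theorem~\ref{convex}, in the form \eqref{2r}): since
$|f^{n+m}(z_2)|\ge M^n(2|f^m(z_1)|,f)\ge M(2|f^m(z_1)|,f^n)$ while $|f^{n+m}(z_1)|\le M(|f^m(z_1)|,f^n)$, one gets
\[
\liminf_{n\to\infty}\frac{h_{n+m}(z_2)}{h_{n+m}(z_1)}\;\ge\;\liminf_{n\to\infty}\frac{\log M(2|f^m(z_1)|,f^n)}{\log M(|f^m(z_1)|,f^n)}\;\ge\;1+\frac{\log 2}{\log|f^m(z_1)|}\;>\;1,
\]
a bound away from $1$ that is uniform in $n$, whence $h(z_2)>h(z_1)$ directly. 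Your assertion that $\log M^n(r,f)/\log r_n\to\infty$ is stronger than needed and not obviously true; boundedness of the ratio away from $1$, supplied by \eqref{2r}, is what closes the argument.
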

\begin{proof}
It follows from Theorem A and Lemma~\ref{Ere} that, if $m$ is sufficiently large, then there exist $z_1, z_2 \in U$ such that
\[
 |f^n(f^m(z_2))| \geq M^n(|2f^m(z_1)|,f)\geq M(|2f^m(z_1)|,f^n),\;\;\mbox{for } n \in \N.
\]
Clearly
\[
 |f^n(f^m(z_1))| \leq M(|f^m(z_1)|,f^n),\;\;\mbox{for } n \in \N,
\]
and so, by~\eqref{2r},
\[
\liminf_{n \to \infty} \frac{h_{n+m}(z_2)}{h_{n+m}(z_1)} = \liminf_{n \to \infty} \frac{\log |f^{n+m}(z_2)|}{\log |f^{n+m}(z_1)|} \geq \liminf_{n \to \infty} \frac{\log M(2|f^m(z_1)|,f^n)}{\log M(|f^m(z_1)|,f^n)} > 1.
\]
Thus $h(z_2) > h(z_1)$, so $h$ is non-constant.
\end{proof}

We now show that the whole sequence $h_n$ converges to $h$ in $U$. In order to do this, we first prove two lemmas.
The first concerns the sequence
\begin{equation}\label{gn}
 g_n(z) = \frac{\log f^n(z)}{\log r_n}, \; \;z \in V, \; n \in \N,
\end{equation}
where $V \subset U$ is a simply connected domain with $z_0 \in
V$, and the branch of the logarithm is chosen so that $|\arg
f^n(z_0)| \leq \pi$. Then, for $n \in \N$, $g_n$ is analytic
in~$V$ and $\Re g_n = h_n$.

\begin{lemma}\label{seq}
Let $V \subset U$ be a simply connected domain with $z_0 \in V$
and let $h_n$, $h$ and $g_n$ be the functions defined
by~\eqref{hn},~\eqref{hlimit} and~\eqref{gn} respectively. If
$(h_{m_k})$ is any subsequence of $(h_n)$ such that
\[
h_{m_k} \to h \mbox{ locally uniformly in } U,
\]
then
\begin{itemize}
\item[(a)] there exists an analytic function $g$ on $V$ with $\Re g = h$ such that $g_{m_k} \to g$ locally uniformly in $V$ and
\item[(b)] for any continuum $C \subset V$, there exists $\eps > 0$ such that, for large $k$,
\[
U_{m_k} \supset A(\min_{z \in C} |f^{m_k}(z)|r_{m_k}^{-\eps/8}, \max_{z \in C}|f^{m_k}(z)|r_{m_k}^{\eps/8}).
\]
\end{itemize}
\end{lemma}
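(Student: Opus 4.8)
The plan is to prove the two parts separately: part~(a) is a routine harmonic-conjugate argument, and part~(b) carries the real content.

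For part~(a), I would reconstruct the imaginary part of $g_{m_k}$ from its real part $h_{m_k}$. Since $h_{m_k}\to h$ locally uniformly in $U$ and all of these functions are harmonic, their partial derivatives also converge locally uniformly (interior derivative estimates for harmonic functions), so $g'_{m_k}=\partial_x h_{m_k}-i\,\partial_y h_{m_k}$ converges locally uniformly in $V$ to the analytic function $G:=\partial_x h-i\,\partial_y h$. To pin down the additive constant I would evaluate at $z_0$: by~\eqref{hn} we have $\Re g_{m_k}(z_0)=h_{m_k}(z_0)=1$, while $|\Im g_{m_k}(z_0)|=|\arg f^{m_k}(z_0)|/\log r_{m_k}\le\pi/\log r_{m_k}\to0$ because $r_n=|f^n(z_0)|\to\infty$ (Baker's property~(c)). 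Hence $g_{m_k}(z_0)\to1$, and integrating $g'_{m_k}$ along paths in the simply connected domain $V$ gives $g_{m_k}\to g$ locally uniformly in $V$, where $g(z)=1+\int_{z_0}^{z}G\,dw$ is analytic with $\Re g=\lim_k h_{m_k}=h$.

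For part~(b), the first thing to note is that $g$ is non-constant in $V$: otherwise $h=\Re g$ would be constant in $V$ and hence, as $h$ is real-analytic and $U$ is connected, constant in $U$, contradicting Lemma~\ref{nonc}. Fix $c\in C$ and write $D(c,\rho)=\{z:|z-c|<\rho\}$. Since $g$ is a non-constant analytic function and $g_{m_k}\to g$ uniformly on a closed disc about $c$ contained in $V$, a standard Rouch\'e (Hurwitz) argument provides $\rho=\rho(c)>0$ and $\sigma=\sigma(c)>0$ with $g_{m_k}(D(c,\rho))\supset D(g(c),\sigma)$ for all large $k$. I would then return to the original scale via $f^{m_k}(z)=\exp(g_{m_k}(z)\log r_{m_k})$, which gives
\[
f^{m_k}(D(c,\rho)) \supset \exp(g(c)\log r_{m_k})\cdot\exp(D(0,\sigma\log r_{m_k})).
\]
The elementary geometric input is that the exponential of a large disc contains a round annulus of comparable logarithmic size: for $|x|<\sqrt{S^{2}-\pi^{2}}$ the full circle $\{|w|=e^{x}\}$ lies in $\exp(D(0,S))$, so $\exp(D(0,S))\supset A(e^{1-S},e^{S-1})$ once $S$ is large. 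Taking $S=\sigma\log r_{m_k}$ and using $|\exp(g(c)\log r_{m_k})|=r_{m_k}^{\Re g(c)}=r_{m_k}^{h(c)}$, this yields, for all large $k$,
\[
U_{m_k} \supset f^{m_k}(D(c,\rho)) \supset A(r_{m_k}^{\,h(c)-\sigma(c)/2},\, r_{m_k}^{\,h(c)+\sigma(c)/2}).
\]

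To finish, put $\mu=\min_{z\in C}h(z)$ and $\nu=\max_{z\in C}h(z)$; since $C$ is a continuum, $h(C)=[\mu,\nu]$. The open intervals $(h(c)-\sigma(c)/2,\,h(c)+\sigma(c)/2)$, for $c\in C$, cover the compact interval $[\mu,\nu]$, so finitely many of them, say for $c_{1},\dots,c_{N}$, already do, and these then also cover $[\mu-\eta,\nu+\eta]$ for some $\eta>0$. As $r_{m_k}>1$, the union of the corresponding finitely many annuli from the last display contains $A(r_{m_k}^{\,\mu-\eta},r_{m_k}^{\,\nu+\eta})\subset U_{m_k}$ for all large $k$. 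Finally, $h_{m_k}\to h$ uniformly on $C$, so $\min_{C}h_{m_k}\to\mu$ and $\max_{C}h_{m_k}\to\nu$, and $|f^{m_k}(z)|=r_{m_k}^{\,h_{m_k}(z)}$ for $z\in C$; so, taking $\eps:=4\eta$, the required annulus $A(\min_{C}|f^{m_k}|\,r_{m_k}^{-\eps/8},\ \max_{C}|f^{m_k}|\,r_{m_k}^{\eps/8})$ is, for all large $k$, contained in $A(r_{m_k}^{\,\mu-\eta},r_{m_k}^{\,\nu+\eta})$, hence in $U_{m_k}$. The step I expect to be most delicate is the exponential-of-a-disc estimate (and checking that the resulting annulus is genuinely centred at $0$), together with arranging that all of the ``for large $k$'' thresholds can be chosen uniformly over the finite subcover; the remaining manipulations are bookkeeping.
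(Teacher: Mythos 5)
Your proof is correct, and both halves are implemented somewhat differently from the paper, though the overall strategy (control $g_{m_k}=\log f^{m_k}/\log r_{m_k}$ and exponentiate) is the same. For part~(a) the paper does not differentiate $h_{m_k}$: it notes that $(g_n)$ is normal in $V$ (Harnack plus Montel), defines $g=h+iv$ via a harmonic conjugate normalised by $v(z_0)=0$, uses the Borel--Carath\'eodory inequality to upgrade $h_{m_k}\to h$ to $g_{m_k}\to g$ on a small circle about $z_0$, and then spreads the convergence to all of $V$ by Vitali's theorem; your route via interior gradient estimates for harmonic functions, convergence of $g_{m_k}'$, and integration from $z_0$ (where $g_{m_k}(z_0)\to 1$ because $|\arg f^{m_k}(z_0)|\le\pi$) is an equally valid and arguably more self-contained alternative. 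For part~(b) the paper argues globally: it fixes a $\delta$-neighbourhood $C_\delta$ with $\overline{C_\delta}\subset V$, uses openness of $g$ to get $g(C_\delta)\supset g(C)_{\eps}$, perturbs this to $g_{m_k}(C_\delta)\supset g_{m_k}(C)_{\eps/4}$, and exponentiates that neighbourhood directly (the condition $(\eps/8)\log r_{m_k}>\pi$ guaranteeing full circles), which yields the annulus bounds immediately in terms of $\min_{C}|f^{m_k}|$ and $\max_{C}|f^{m_k}|$; you instead work locally with a Rouch\'e disc at each $c\in C$, exponentiate to get an annulus at the level $h(c)$, glue finitely many of these using compactness of $h(C)=[\mu,\nu]$, and only then translate back to $\min_{C}|f^{m_k}|$ and $\max_{C}|f^{m_k}|$ via uniform convergence of $h_{m_k}$ on $C$. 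Your version makes explicit two points the paper leaves implicit --- that $g$ is non-constant on $V$ (via Lemma~\ref{nonc} and the identity principle for harmonic functions), which any open-mapping or Rouch\'e step needs, and the Rouch\'e-type perturbation hidden in the paper's passage from $g(C_\delta)\supset g(C)_{\eps}$ to $g_{m_k}(C_\delta)\supset g(C)_{\eps/2}$ --- at the cost of extra bookkeeping with $\mu$, $\nu$, the finite subcover and the uniformity of the ``large $k$'' thresholds, all of which you handle correctly; the paper's version is shorter because it stays in the $g$-plane until a single final exponentiation.
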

\begin{proof}
First, $(g_n)$ is a normal family in $V$ by Montel's theorem because $(h_n)$
is locally uniformly bounded there by Harnack's inequality
\cite[Theorem~1.3.1]{tR95}, since $h_n(z_0)=1$ for $n\in\N$.

Let $v$ be the harmonic conjugate of $h$ on $V$, chosen so that $v(z_0)=0$, and put
\[
g(z)=h(z)+iv(z).
\]
Then $g$ is analytic in $V$ and
\begin{equation}\label{g-diff}
g_n(z)-g(z)=h_n(z)-h(z)+i(\arg f^n(z)/\log r_n-v(z)).
\end{equation}
Now choose $\rho>0$ such that $\{z:|z-z_0|\le 2\rho\}\subset V$. Then by the Borel-Carath\'eodory inequality (see \cite[page~20]{gV}) we have, for $n\in\N$,
\begin{equation}\label{Bor-Car}
\max_{|z-z_0|=\rho}|g_n(z)-g(z)|\le 2\max_{|z-z_0|\le 2\rho}|h_n(z)-h(z)|+3|g_n(z_0)-g(z_0)|.
\end{equation}
Since $h_{m_k}\to h$ locally uniformly in $U$, $|\arg f^n(z_0)|\le \pi$, for $n\in\N$, and $v(z_0)=0$, it follows from \eqref{g-diff} and \eqref{Bor-Car} that $g_{m_k}\to g$ uniformly in $\{z:|z-z_0|\le \rho\}$. Since $(g_n)$ is normal in $V$ it follows that $g_{m_k}\to g$ locally uniformly in $V$ by Vitali's theorem. This proves part~(a).

Now let $C \subset V$ be a continuum and for $\delta>0$ let
$C_{\delta}$ denote the $\delta$-neighbourhood of $C$. If
$\delta>0$ is such that $\overline{C_{\delta}}\subset V$, then
there exists $\eps>0$ such that
\[
g(C_{\delta})\supset g(C)_{\eps}.
\]
Thus for large $k$ we have, by part~(a),
\[
g_{m_k}(C_{\delta})\supset g(C)_{\eps/2}\supset g_{m_k}(C)_{\eps/4},
\]
and hence
\begin{align}
U_{m_k}&\supset f^{m_k}(C_{\delta})\notag\\
&= \exp\left((\log r_{m_k}) g_{m_k}(C_{\delta})\right)\notag\\
&\supset \exp\left((\log r_{m_k}) g_{m_k}(C)_{\eps/4}\right)\notag\\
&\supset A(\min_{z \in C} |f^{m_k}(z)|r_{m_k}^{-\eps/8}, \max_{z \in C}|f^{m_k}(z)|r_{m_k}^{\eps/8}),\notag
\end{align}
provided that $(\eps/8)\log r_{m_k}>\pi$. This completes the proof of part~(b).
\end{proof}

Our second lemma concerns the relationship between the values taken by $h_n$ and $h_{n+m}$ for $n,m \in \N$.

\begin{lemma}\label{delta}
Let $z \in U$ and, for $n \in \N$, let $h_n$ be the function defined by~\eqref{hn} and $\delta_n = 1/\sqrt{\log r_n}$. There exists $N \in \N$ such that
\begin{itemize}
\item[(a)] if, for some $n \geq N$,
\[
U_n \supset A(r_n^{1-2\pi\delta_n},r_n^{1 + 2\pi\delta_n}),
\]
then
\[
r_{n+m} \geq M(r_n,f^m)^{1 - \delta_n},\;\;\mbox{for } m \in \N;
\]
\item[(b)] if, for some $n \geq N$,
\begin{equation}\label{hbig}
h_n(z) \geq 1 \;\mbox{ and }\; U_n \supset A(r_n^{h_n(z)-2\pi\delta_n},r_n^{h_n(z) + 2\pi\delta_n}),
\end{equation}
then
\[
h_{n+m}(z) \geq h_n(z)(1 - \delta_n),\;\;\mbox{for } m \in \N;
\]
\item[(c)] if, for some $n \geq N$,
\begin{equation}\label{hsmall}
h_n(z) \leq 1 \;\mbox{ and }\; U_n \supset A(r_n^{h_n(z)-2\pi\delta_n},r_n^{h_n(z) + 2\pi\delta_n}),
\end{equation}
then
\[
h_{n+m}(z) \leq h_n(z)(1 + 2\delta_n),\;\;\mbox{for } m \in \N;
\]
\item[(d)] if $z' \in U$ and, for some $n \geq N$,
\begin{equation}\label{hcomp}
h_n(z)/ h_n(z') \geq 1 \;\mbox{ and }\; U_n \supset A(r_n^{h_n(z)-2\pi\delta_n},r_n^{h_n(z)+2\pi\delta_n}),
\end{equation}
then
\[
h_{n+m}(z)/h_{n+m}(z') \geq  (1 - \delta_n) h_n(z)/h_n(z'),\;\;\mbox{for } m \in \N.
\]
\end{itemize}
\end{lemma}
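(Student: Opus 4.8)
The plan is to prove part~(a) first and then obtain parts~(b),~(c),~(d) as essentially formal consequences, each by applying part~(a) (or a mild variant of its argument) to suitably rescaled data, tracking the exponents. Throughout we use that $h_n(z_0)=1$, so $r_{n+m}=|f^m(f^n(z_0))|$, together with the inequalities $m(r,f^m)\le M(r,f^m)$ and the convexity estimate Theorem~\ref{convex}.

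For part~(a): suppose $U_n\supset A(r_n^{1-2\pi\delta_n},r_n^{1+2\pi\delta_n})$ for some large $n$. Since $f^m$ maps $U_n$ into $U_{n+m}$ and $U_{n+m}$ is a bounded multiply connected wandering domain whose bounded complementary components contain $0$ (property~(b) before Theorem~\ref{main1a}, for $n\ge N$), the function $f^m$ omits the value $0$ — indeed omits a neighbourhood of $0$ — on the annulus $A=A(r_n^{1-2\pi\delta_n},r_n^{1+2\pi\delta_n})$. Hence $m(\rho,f^m)>1$ on this annulus once $n$ is large (using property~(c), $\mathrm{dist}(U_{n+m},0)\to\infty$), so Theorem~\ref{Har} part~(b) applies with $r=r_n$, $a=1-2\pi\delta_n$, $b=1+2\pi\delta_n$, $\delta=\delta_n=1/\sqrt{\log r_n}$: note $\eps=2\pi\delta_n$ lies in the required range and $b-a=4\pi\delta_n$, so the hypothesis $\delta<\min\{2,(b-a)/(4\pi)\}$ is met with equality on the nose — I would in fact apply Theorem~\ref{Har}(a) directly with $\eps=2\pi\delta_n$ to be safe. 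This gives $\log m(\rho,f^m)\ge(1-\delta_n)\log M(\rho,f^m)$ for $\rho=r_n$ (the midpoint of the exponent interval). Now $f^n(z_0)$ lies on the circle $|w|=r_n$, so
\[
\log r_{n+m}=\log|f^m(f^n(z_0))|\ge \log m(r_n,f^m)\ge(1-\delta_n)\log M(r_n,f^m),
\]
which is exactly the claimed inequality $r_{n+m}\ge M(r_n,f^m)^{1-\delta_n}$.

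For part~(b): when $h_n(z)\ge1$ we have $|f^n(z)|=r_n^{h_n(z)}\ge r_n$. Apply the same argument as in~(a) but centred on the circle $|w|=r_n^{h_n(z)}$, using the hypothesis~\eqref{hbig} that $U_n$ contains the annulus $A(r_n^{h_n(z)-2\pi\delta_n},r_n^{h_n(z)+2\pi\delta_n})$; Theorem~\ref{Har}(a) with $\eps=2\pi\delta_n$ and $r=r_n$ gives $\log m(r_n^{h_n(z)},f^m)\ge(1-\delta_n)\log M(r_n^{h_n(z)},f^m)$. Since $f^m(f^n(z))$ lies on $|w|=|f^{m}(f^n(z))|$ with modulus at least $m(r_n^{h_n(z)},f^m)$, and $M(r_n^{h_n(z)},f^m)\ge M(r_n,f^m)^{h_n(z)}\ge r_{n+m}^{h_n(z)}$ by Theorem~\ref{convex} (as $h_n(z)\ge1$ and by part~(a)), we get
\[
\log|f^{n+m}(z)|\ge(1-\delta_n)\,h_n(z)\log M(r_n,f^m)\ge (1-\delta_n)\,h_n(z)\log r_{n+m},
\]
whence $h_{n+m}(z)\ge(1-\delta_n)h_n(z)$. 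Part~(c) is the mirror image: when $h_n(z)\le1$, the relevant circle $|w|=r_n^{h_n(z)}$ has modulus $\le r_n$, one uses the \emph{upper} Harnack bound $\log M(\rho,f^m)\le\frac{1+\pi/(\eps\log r)}{1-\pi/(\eps\log r)}\log m(\rho,f^m)$ from Theorem~\ref{Har} together with the other half of Theorem~\ref{convex} ($\log M(r^{c'},f^m)\le c'\log M(r,f^m)$ for $c'=h_n(z)<1$), bounding $|f^{n+m}(z)|\le M(r_n^{h_n(z)},f^m)$; the factor $(1-\delta_n)^{-1}$ is absorbed into $(1+2\delta_n)$ for large $n$. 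Part~(d) follows by dividing: under~\eqref{hcomp}, apply part~(b) to $z$ to get $h_{n+m}(z)\ge(1-\delta_n)h_n(z)$, and note $h_{n+m}(z')\le \log M(r_n^{h_n(z')},f^m)/\log r_{n+m}\le h_n(z')\,\log M(r_n,f^m)/\log r_{n+m}$; since $h_n(z)/h_n(z')\ge1$ forces $h_n(z)\ge h_n(z')$ — actually one needs $h_n(z')$ could exceed or be below $1$, so I would instead just combine the lower bound on $|f^{n+m}(z)|$ from the proof of~(b) with the trivial upper bound $|f^{n+m}(z')|\le M(r_n^{h_n(z')},f^m)\le M(r_n^{h_n(z)},f^m)$ (using $h_n(z')\le h_n(z)$ and monotonicity of $M$), yielding directly
\[
\frac{h_{n+m}(z)}{h_{n+m}(z')}=\frac{\log|f^{n+m}(z)|}{\log|f^{n+m}(z')|}\ge\frac{(1-\delta_n)\,h_n(z)\log M(r_n,f^m)}{h_n(z')\log M(r_n^{h_n(z)},f^m)}\ge(1-\delta_n)\frac{h_n(z)}{h_n(z')},
\]
the last step again by Theorem~\ref{convex}.

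The main obstacle is choosing the single threshold $N$ uniformly in $m$ and verifying the quantitative inputs fit: one must check that for $n\ge N$ the relevant annulus genuinely maps into $U_{n+m}$ with $m(\cdot,f^m)>1$ there (this uses properties~(b),(c) before Theorem~\ref{main1a}, which hold for $n\ge N$ with $N$ independent of $m$), that $\eps=2\pi\delta_n$ stays in the admissible interval $(\pi/\log r_n,(b-a)/2)$ — which forces the annulus half-width $2\pi\delta_n$ to be compared with $\pi/\log r_n$, i.e. $2\pi/\sqrt{\log r_n}>\pi/\log r_n$, true for $\log r_n>1/4$ — and that the convexity constant $R(f)$ from Theorem~\ref{convex} is eventually below $r_n$ and below the inner radius $r_n^{h_n(z)}$ of all circles used (here one needs $h_n(z)$ bounded away from $0$, or else handles the degenerate case $h_n(z)$ small separately; since $h_n(z_0)=1$ and $h_n$ is harmonic, on any fixed compact set $h_n(z)$ is bounded above and below by Harnack, so this causes no trouble once $N$ is large enough depending on the point $z$ — and the statement is pointwise in $z$). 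None of this is deep; it is bookkeeping to package Theorem~\ref{Har} and Theorem~\ref{convex} into the four stated estimates.
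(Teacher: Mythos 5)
Your parts (a)--(c) follow essentially the paper's own route: apply Theorem~\ref{Har} (with $g=f^m$) on the annulus supplied by the hypothesis to get $\log|f^m(f^n(z))|\ge(1-\delta_n)\log M(|f^n(z)|,f^m)$, take $z=z_0$ for (a), and combine this with the two halves of Theorem~\ref{convex} and the trivial bound $r_{n+m}\le M(r_n,f^m)$ for (b) and (c). Your use in (c) of the Harnack estimate on the circle $|w|=r_n$ matches the paper (which invokes part (a) at that point), and the borderline case $\eps=2\pi\delta_n=(b-a)/2$ that you flag is present in the paper's own application as well and is harmless, since only the middle circle of the annulus is needed.

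Part (d), however, contains a genuine error. In your displayed chain the first inequality needs $\log|f^{n+m}(z)|\ge(1-\delta_n)h_n(z)\log M(r_n,f^m)$ and $\log|f^{n+m}(z')|\le h_n(z')\log M(r_n^{h_n(z)},f^m)$: the former rests on the convexity step $\log M(r_n^{h_n(z)},f^m)\ge h_n(z)\log M(r_n,f^m)$, valid only when $h_n(z)\ge1$, and the latter does not follow from your stated bound $\log|f^{n+m}(z')|\le\log M(r_n^{h_n(z)},f^m)$ unless $h_n(z')\ge1$; neither condition is part of hypothesis~\eqref{hcomp}. Worse, the final inequality amounts to $\log M(r_n,f^m)\ge\log M(r_n^{h_n(z)},f^m)$, which is false whenever $h_n(z)>1$ (Theorem~\ref{convex} gives the reverse), so the chain closes only in the degenerate case $h_n(z)=h_n(z')=1$. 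Conceptually, once you majorise $|f^{n+m}(z')|$ by the maximum modulus on the \emph{larger} circle $|w|=r_n^{h_n(z)}$, the best you can recover is $h_{n+m}(z)/h_{n+m}(z')\ge 1-\delta_n$, because the Harnack estimate gives exactly $\log|f^{n+m}(z)|\ge(1-\delta_n)\log M(r_n^{h_n(z)},f^m)$ and no more; the factor $h_n(z)/h_n(z')$ is irretrievably lost at that step. The repair is short and uses only ingredients you already have (and is the paper's argument): keep the bound $\log|f^{n+m}(z')|\le\log M(|f^n(z')|,f^m)$ on the original circle, and apply Theorem~\ref{convex} with $r=|f^n(z')|$ and $r^c=|f^n(z)|$, $c=h_n(z)/h_n(z')\ge1$, to obtain $\log M(|f^n(z)|,f^m)\ge\bigl(h_n(z)/h_n(z')\bigr)\log M(|f^n(z')|,f^m)$; combined with the Harnack lower bound for $\log|f^{n+m}(z)|$ this yields part (d) directly, with no assumption on whether $h_n(z)$ or $h_n(z')$ lies above or below~$1$.
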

\begin{proof}
Suppose that~\eqref{hbig} is satisfied for some $z \in U$, $n \in \N$. Since $|f^m| > 1$ in $U_n$, for all $m \in \N$, it follows from Theorem~\ref{Har} part (b) applied to $g = f^m$ that, for $m\in\N$ and sufficiently large $n$,
\begin{equation}\label{fHar}
\log |f^m(f^n(z))| \geq (1 - \delta_n)\log M(|f^n(z)|,f^m).
\end{equation}
Since $h_n(z_0)=1$, part~(a) now follows by taking $z = z_0$.

To prove part~(b), we recall from \eqref{hn} that $\log |f^n(z)| = h_n(z)\log r_n$, for $n \in \N$, $z \in U$, and so it follows from~\eqref{fHar} and Theorem~\ref{convex} that, if~\eqref{hbig} is satisfied for some $z \in U$, $n \in \N$, where $n$ is sufficiently large, then, for $m \in \N$,
\begin{eqnarray*}
h_{n+m}(z) \log r_{n+m} & = & \log|f^m(f^n(z))| \geq (1 - \delta_n)\log M(r_n^{h_n(z)},f^m)\\
 & \geq & (1 - \delta_n)h_n(z) \log M(r_n,f^m) \geq (1 - \delta_n)h_n(z) \log r_{n+m}.
\end{eqnarray*}
Part~(b) now follows.

To prove part~(c), we suppose that~\eqref{hsmall} is satisfied for some $z \in U$, $n \in \N$. Then it follows from Theorem~\ref{convex} and part~(a) that, for $m\in\N$ and sufficiently large $n$,
\begin{eqnarray*}
h_{n+m}(z) \log r_{n+m} &=& \log |f^m(f^n(z))| \leq \log M(|f^n(z)|,f^m)\\
&=&  \log M(r_n^{h_n(z)},f^m)\leq h_n(z) \log M(r_n,f^m)\\
 &\leq& h_n(z) \log r_{n+m}/ (1 - \delta_n)\leq h_n(z) \log r_{n+m} (1 + 2\delta_n).
\end{eqnarray*}
This proves part~(c).

Finally, we suppose that~\eqref{hcomp} is satisfied for some $z, z' \in U$, $n \in \N$. Then it follows from Theorem~\ref{Har} part (b) and Theorem~\ref{convex}, with $r=|f^n(z')|$ and $r^c=|f^n(z)|$, that, for $m\in\N$ and sufficiently large $n$,
\begin{eqnarray*}
\frac{h_{n+m}(z)}{h_{n+m}(z')} & = & \frac{\log |f^{n+m}(z)|}{\log |f^{n+m}(z')|}\\
& \geq & (1 - \delta_n)\frac{\log M(|f^n(z)|,f^m)}{\log M(|f^n(z')|,f^m)}\\
&  \geq& (1 - \delta_n)\frac{\log |f^n(z)|}{\log |f^n(z')|} = (1 - \delta_n)\frac{h_n(z)}{h_n(z')}.
\end{eqnarray*}
This proves part~(d).
\end{proof}
\begin{proof}[Proofs of Theorems~\ref{main1a} and \ref{main1}] Recall that, for the functions $h_n$ defined in~\eqref{hn}, we know that there exists a subsequence $(n_k)$ such that the functions $h_{n_k}$ converge locally uniformly in $U$ to a positive non-constant harmonic function $h$. Thus the hypotheses of Lemma~\ref{seq} hold for the sequence $(n_k)$. It then follows from Lemma~\ref{seq} part~(b) that we can apply Lemma~\ref{delta} parts (b) and (c) to $U_{n_k}$, provided that $k$ is sufficiently large, and so deduce that the whole sequence $h_n$ converges to $h$ locally uniformly in $U$. This completes the proof of Theorem~\ref{main1a}.

The result of Theorem~\ref{main1} follows from Theorem~\ref{main1a} and Lemma~\ref{seq} part~(b) applied to the whole sequence $h_n$ with $C=\{z_0\}$.
\end{proof}

\section{Dynamics in a multiply connected wandering domain}
\setcounter{equation}{0}

Let $f$ be a {\tef} with a {\mconn} {\wand} $U$, let $z_0 \in U$ and recall that there exists $\alpha > 0$ such that, for large $n \in \N$,
the maximal annulus in $U_n$, centred at $0$, that contains $f^n(z_0)$ is of the form
 \[
 B_n = A(r_n^{a_n}, r_n^{b_n})
 , \mbox{ where } r_n = |f^n(z_0)|,  \; 0 < a_n < 1 - \alpha < 1 + \alpha < b_n.
 \]
 Also, recall that
 \[
 C_n = A\left(r_n^{a_n+2\pi\delta_n},r_n^{b_n(1-3\pi\delta_n)}\right), \mbox{ where } \delta_n = 1/\sqrt{\log r_n}.
 \]
In this section we prove several results concerning the dynamics of $f$ in the annuli $B_n$ and $C_n$. The proofs use the harmonic functions $h_n$ defined in \eqref{hn} and
%\[
% h_n(z) = \frac{\log |f^n(z)|}{\log r_n}, \; z \in U, \; n \in \N,
%\]
studied in the previous section. Recall from Theorem~\ref{main1a} that
\begin{equation}\label{lim}
 h(z) = \lim_{n\to \infty} h_n(z),\;\;\mbox{for } z \in U,
\end{equation}
defines a non-constant positive harmonic function in $U$.

We begin by proving Theorem~\ref{main2a} which shows that the union of the annuli $C_n$ acts as an absorbing set for the dynamics of $f$ in $U$.

\begin{proof}[Proof of Theorem~\ref{main2a}]
Theorem~\ref{main2a} states that, if $C$ is a compact subset of $U$, then $f^n(C) \subset
C_n$, for large $n \in \N$. In order to prove this we cover $C$ by a
finite number of closed discs $D_1,\ldots, D_p$ in $U$, and
then, for $j = 1,\ldots,p$, join $D_j$ to $\{z_0\}$ by a simple curve $L_j$ in $U$.
It follows from~\eqref{lim} that we can apply Lemma~\ref{seq} part~(b) to each continuum $D_j \cup L_j$,
$j=1,\ldots,p$, with a suitable simply connected domain $V_j
\subset U$ containing $D_j \cup L_j$, to show that there exists $\eps>0$ such that, for large $n \in \N$,
\[
U_n=f^n(U) \supset A(\min_{z \in C}|f^n(z)|r_n^{-\eps/8}, \max_{z \in C}|f^n(z)|r_n^{\eps/8})
\]
and hence, since $z_0\in C$,
\begin{equation}\label{Can}
f^n(C) \subset A(r_n^{a_n+\eps/8},r_n^{b_n-\eps/8}).
\end{equation}
Now let $H = \max_{z \in C} h(z)$. If $b_n \le 2H$, then for sufficiently large $n$ we have
\[
a_n + \eps/8 > a_n + 2\pi \delta_n\quad \mbox{and}\quad  b_n - \eps/8 < b_n(1 - 3\pi \delta_n),
\]
so it follows from~\eqref{Can} that $f^n(C) \subset C_n$.

If $b_n > 2H$, then for sufficiently large $n$ we have $a_n + \eps/8 > a_n + 2\pi \delta_n$ and
\[
b_n(1-3 \pi \delta_n) > b_n/2 >H = \max_{z \in C}h_n(z) = \max_{z \in C}\frac{\log |f^n(z)|}{\log r_n},
\]
so $f^n(C) \subset C_n$, by~\eqref{Can} again. This completes the proof.
\end{proof}

The next result implies that in $C_n$ the minimum modulus is very close to the maximum modulus. This is a key result that we use often and it follows immediately from Theorem~\ref{Har} by taking $g = f^m$, $a = a_n$, $b = b_n$ and $\delta = \delta_n$.

\begin{theorem}\label{main2b}
Let $f$ be a {\tef} with a {\mconn} {\wand} $U$, let $z_0 \in U$ and, for large $n \in \N$, let $r_n$, $a_n$ and $b_n$ be defined as in \eqref{Bndef}.
\begin{itemize}
\item[(a)]
If $m \in \N$ and $\eps \in \left(\pi/\log r_n, (b_n - a_n)/2\right)$, where $n \in \N$ is sufficiently large, then
\[
\log m(\rho,f^m) \geq \left(1 - \frac{2\pi}{\eps \log r_n}\right) \log M(\rho,f^m),\;\;\mbox{for } \rho \in [r_n^{a_n + \eps}, r_n^{b_n - \eps}].
\]
\item[(b)] In particular, if $\delta_n = 1/\sqrt{\log r_n}$, then, for $m \in \N$ and large $n \in \N$,
\[
\log m(\rho,f^m) \geq (1 - \delta_n) \log M(\rho,f^m),\;\;\mbox{for } \rho \in [r_n^{a_n + 2\pi \delta_n}, r_n^{b_n - 2\pi \delta_n}].
\]
\end{itemize}
\end{theorem}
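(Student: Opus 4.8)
The plan is to deduce both parts directly from Theorem~\ref{Har}, applied with $g=f^m$ on the maximal annulus $B_n=A(r_n^{a_n},r_n^{b_n})$, which by~\eqref{Bndef} is a subset of $U_n=f^n(U)$. The only hypothesis of Theorem~\ref{Har} that requires comment is~\eqref{mbig1}, namely that $m(\rho,f^m)>1$ for every $\rho\in(r_n^{a_n},r_n^{b_n})$. I would obtain this from the normalisation adopted in Section~4, where we may assume $|f^k(z)|>1$ for all $z\in\overline U$ and all $k\in\N$; consequently every image component $U_\ell$ consists of points of modulus greater than~$1$, and since $f^m(B_n)\subset f^m(U_n)\subset U_{n+m}$ we get $|f^m(z)|>1$ for all $z\in B_n$. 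As the circle $\{z:|z|=\rho\}$ is contained in $B_n$ whenever $r_n^{a_n}<\rho<r_n^{b_n}$, this yields $m(\rho,f^m)>1$ on the stated range. Moreover $r_n=|f^n(z_0)|\to\infty$ as $n\to\infty$ (since $U$ is a wandering domain, so $\mathrm{dist}(U_n,0)\to\infty$), so $r_n>1$ and $\log r_n>0$ for large~$n$, and Theorem~\ref{Har} applies with $r=r_n$, $a=a_n$, $b=b_n$.

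Granting this, part~(a) is precisely Theorem~\ref{Har}(a) transcribed with these choices: the admissible range for the parameter, $\eps\in(\pi/\log r_n,(b_n-a_n)/2)$, is exactly the hypothesis there, and the conclusion $\log m(\rho,f^m)\ge(1-2\pi/(\eps\log r_n))\log M(\rho,f^m)>0$ holds on $[r_n^{a_n+\eps},r_n^{b_n-\eps}]$. For part~(b) the extra point to check is that $\delta_n=1/\sqrt{\log r_n}$ obeys the constraint $\delta_n<\min\{2,(b_n-a_n)/(4\pi)\}$ of Theorem~\ref{Har}(b) for all sufficiently large~$n$. This is immediate: $r_n\to\infty$ forces $\delta_n\to0$, while $b_n-a_n>2\alpha$ for a fixed $\alpha>0$ by~\eqref{Bndef}, so $\delta_n<(b_n-a_n)/(4\pi)$ and $\delta_n<2$ once $n$ is large. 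Taking $\eps=2\pi\delta_n$ in Theorem~\ref{Har}(b) then gives the assertion of part~(b) on $[r_n^{a_n+2\pi\delta_n},r_n^{b_n-2\pi\delta_n}]$.

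There is no serious obstacle here; the statement is essentially a repackaging of Theorem~\ref{Har}, and the work is entirely bookkeeping: confirming the positivity normalisation that makes $\log|f^m|$ a positive harmonic function on $B_n$, and confirming the elementary inequalities for $\eps$ and $\delta_n$, which follow from $r_n\to\infty$ together with the uniform lower bound $b_n-a_n>2\alpha$. Uniformity of the threshold ``$n$ sufficiently large'' over all $m\in\N$ is automatic, since $m$ appears in none of these constraints.
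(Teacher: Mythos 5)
Your proposal is correct and follows exactly the paper's route: the paper deduces Theorem~\ref{main2b} immediately from Theorem~\ref{Har} with $g=f^m$, $a=a_n$, $b=b_n$, $\delta=\delta_n$, and your verification of the hypotheses (the positivity normalisation $|f^k|>1$ on $\overline U$ and the bounds on $\eps$ and $\delta_n$ via $r_n\to\infty$ and $b_n-a_n>2\alpha$) is precisely the bookkeeping the paper leaves implicit.
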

Another application of Theorem~\ref{main2b} part~(a) is given in \cite[Corollary~4.1]{wB11}.

The next result states that, for large $n \in \N$, the iterates of $f$ behave like monomials inside $C_n$. This follows immediately from Theorem~\ref{pol}.

\begin{theorem}\label{poly}
Let $f$ be a {\tef} with a {\mconn} {\wand} $U$, let $z_0 \in U$ and, for large $n \in \N$, let $r_n$, $a_n$, $b_n$, $\delta_n$, $B_n$ and $C_n$  be defined as in \eqref{Bndef} and~\eqref{Cn}.

There exists $N\in\N$ such that for $n \ge N$ and $m \in \N$, the preimage under~$f^m$ of $A(M(r^{a_n},f^m),M(r^{(b_n-2\pi\delta_n)(1-\delta_n)},f^m))$ has a doubly connected component $A_{n,m}$ such that
\[
C_n \subset A_{n,m} \subset B_n
\]
and $f^m$ has no critical points in $A_{n,m}$. Moreover, for $n \ge N$ and $m \in \N$, we can write $f^m(z) = P_{n,m}(\phi_{n,m}(z))$, $z \in A_{n,m}$, where
\begin{itemize}
\item[(a)] $\phi_{n,m}$ is a conformal map on $A_{n,m}$ which satisfies
\[
|z|^{1-\delta_n-2\pi\delta_n\log r_n/\log |z|}\le|\phi_{n,m}(z)|\le |z|^{1 + 2\delta_n}, \;\;\mbox{for } r_n^{a_n+4\pi\delta_n}\le |z|\le r_n^{b_n(1-5\pi\delta_n)};
\]
\item[(b)] $P_{n,m}(z) = q_{n,m}z^{d_{n,m}}$, where $q_{n,m}>0$, and the degree $d_{n,m}$ is the number of zeros of~$f^m$ in $\{z:|z|<r_n\}$, which satisfies
\[
(1 - 2 \delta_n) \frac{\log M(r_n,f^m)}{\log r_n} \leq d_{n,m} \leq \frac{\log M(r_n,f^m)}{(1 - a_n) \log r_n}.
\]
\end{itemize}
\end{theorem}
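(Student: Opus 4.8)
\emph{Proof plan.} The plan is to obtain Theorem~\ref{poly} as an immediate consequence of Theorem~\ref{pol}. For each fixed $m \in \N$, I would apply Theorem~\ref{pol} to the (fixed) transcendental entire function $f$, with the iteration index called $n$ there replaced by $m$, and with $r = r_n$, $a = a_n$, $b = b_n$ and $\delta = \delta_n = 1/\sqrt{\log r_n}$. Once the hypotheses of Theorem~\ref{pol} are verified for all large $n$, its conclusions translate directly: since $A(r_n^{a_n+2\pi\delta_n}, r_n^{b_n(1-3\pi\delta_n)}) = C_n$ and $A(r_n^{a_n}, r_n^{b_n}) = B_n$, the preimage statement of Theorem~\ref{pol} yields the doubly connected component $A_{n,m}$ with $C_n \subset A_{n,m} \subset B_n$ and no critical points of $f^m$ in $A_{n,m}$, while parts~(a) and~(b) become exactly parts~(a) and~(b) of Theorem~\ref{poly}. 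So the real work is to check the hypotheses, with a threshold on $n$ that is independent of $m$.

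First I would verify the modulus condition $m(\rho, f^m) > 1$ for $\rho \in (r_n^{a_n}, r_n^{b_n})$. As $B_n = A(r_n^{a_n}, r_n^{b_n}) \subset U_n = f^n(U)$, we have $f^m(B_n) \subset f^{n+m}(U) = U_{n+m}$, and, by the normalisation made at the start of Section~4, every point of $U_{n+m}$ has modulus $> 1$; hence $|f^m| > 1$ on $B_n$, so $m(\rho,f^m) > 1$ on each circle contained in $B_n$, with no restriction on $m$. Next, $U_n$ is bounded, by property~(a) recalled before Theorem~\ref{main1a}, so $b_n < \infty$, and, as recalled at the start of this section, $0 < a_n < 1-\alpha < 1+\alpha < b_n$, giving $0 < a_n < 1 < b_n$. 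The condition $\delta_n < \min\{1,(b_n-a_n)/(4\pi+5\pi b_n)\}$ holds for all large $n$ because $\delta_n = 1/\sqrt{\log r_n} \to 0$ while $(b_n-a_n)/(4\pi+5\pi b_n)$ stays above a positive constant independent of $n$; here one uses $b_n - a_n > 2\alpha$ and $a_n < 1$, together with an elementary case split according to whether $b_n$ is at most $2$ or larger, which also handles the possibility $b_n \to \infty$.

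The one hypothesis requiring a short genuine argument is $r_n^{a_n} \ge R(f)$, where $R(f)$ is the constant supplied by Theorem~\ref{pol}. For this I would note that the inner radius of $B_n$ is at least $\operatorname{dist}(0, U_n)$: if $r_n^{a_n} < \rho < r_n^{b_n}$ then the whole circle $\{|z| = \rho\}$ lies in $B_n \subset U_n$, so $\rho \ge \operatorname{dist}(0, U_n)$, whence $r_n^{a_n} \ge \operatorname{dist}(0, U_n)$. Since $\operatorname{dist}(0, U_n) \to \infty$ (property~(c) recalled before Theorem~\ref{main1a}), we get $r_n^{a_n} \to \infty$, so $r_n^{a_n} \ge R(f)$ for all large $n$. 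Finally, taking $N$ to be the largest of the finitely many lower thresholds on $n$ arising above — none of which depends on $m$ — Theorem~\ref{pol} applies for every $n \ge N$ and every $m \in \N$, and substituting back $r = r_n$, $a = a_n$, $b = b_n$, $\delta = \delta_n$ gives Theorem~\ref{poly}. I expect the main (essentially the only) obstacle to be precisely this uniformity in $m$: confirming that $R(f)$ depends only on $f$, and that both $r_n^{a_n} \ge R(f)$ and $\delta_n < (b_n-a_n)/(4\pi+5\pi b_n)$ hold from some $m$-independent index $n$ onwards.
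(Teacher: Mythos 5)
Your proposal is correct and is exactly the paper's route: the paper deduces Theorem~\ref{poly} directly from Theorem~\ref{pol} (stating that it ``follows immediately''), with $r=r_n$, $a=a_n$, $b=b_n$, $\delta=\delta_n$ and the iterate $f^m$ in place of $f^n$. Your explicit verifications of the hypotheses --- $|f^m|>1$ on $B_n\subset U_n$ via the Section~4 normalisation, $0<a_n<1<b_n$, the lower bound on $(b_n-a_n)/(4\pi+5\pi b_n)$, and $r_n^{a_n}\ge\operatorname{dist}(0,U_n)\to\infty$ to get $r_n^{a_n}\ge R(f)$ uniformly in $m$ --- are precisely the details the paper leaves implicit.
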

In particular, Theorem~\ref{poly} implies that for $n \ge N$ and $m \in \N$, the function $f^m$ has no critical points in $C_n$; thus we have proved Theorem~\ref{nocrits}.

We conclude this section by showing how the sequences of annuli $(B_n)$ and $(C_n)$ relate to each other.

\begin{theorem}\label{cover}
Let $f$ be a {\tef} with a {\mconn} {\wand} $U$, let $z_0 \in U$ and, for large $n \in \N$, let $r_n$, $a_n$, $b_n$, $\delta_n$, $B_n$ and $C_n$  be defined as in \eqref{Bndef} and~\eqref{Cn}.

There exists an absolute constant $K>1$ such that, for $m \in \N$ and large $n \in \N$,
 \begin{itemize}
 \item[(a)]
  $M(r_n,f^m) \geq r_{n+m} \geq M(r_n,f^m)^{1- \delta_n}$;
\item[(b)]
$f^m(B_n) \cap B_{n+m} \supset A(M(r_n^{a_n},f^m), M(r_n^{b_n}/K,f^m)) \supset A(r_{n+m}^{a_n + \delta_n}, r_{n+m}^{b_n - \delta_n})$;
\item[(c)] $f^m(C_n) \subset B_{n+m}.$
\end{itemize}
\end{theorem}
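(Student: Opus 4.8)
The plan is to establish the three parts in the order (a), (b), (c), feeding part~(a) into part~(b) and part~(b) into part~(c). The device that ties (b) and (c) together is the single ``intermediate'' annulus $A'=A(M(r_n^{a_n},f^m),M(r_n^{b_n}/K,f^m))$: I shall show that $f^m(B_n)$ covers $A'$, that $A'\subset B_{n+m}$, and that $f^m(C_n)\subset A'$, whence the three stated inclusions follow. Throughout, ``large $n$'' means large enough (uniformly in $m\in\N$) that $r_n\ge R$, $r_n^{a_n}\ge\max\{R,R_0\}$, $r_n^{\delta_n}\ge K$, $r_n^{3\pi b_n\delta_n}\ge K$, $a_n+\delta_n<1$, $b_n-\delta_n>1$ and $\delta_n$ is small; all of these hold since $r_n\to\infty$, $\delta_n\to0$, $a_n<1-\alpha$, $b_n>1+\alpha$, and $r_n^{a_n}\ge\operatorname{dist}(0,U_n)\to\infty$ --- the last because the inner boundary circle $\{|z|=r_n^{a_n}\}$ of the maximal annulus $B_n$ must meet $\partial U_n$, and $\operatorname{dist}(0,U_n)\to\infty$ by Baker's property~(c) stated before Theorem~\ref{main1a}.

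For part~(a) the upper bound is immediate: $r_{n+m}=|f^m(f^n(z_0))|\le M(|f^n(z_0)|,f^m)=M(r_n,f^m)$. For the lower bound I would apply the Harnack-type estimate Theorem~\ref{main2b}(b) to the iterate $f^m$ at the radius $\rho=r_n$, which for large $n$ lies in $[r_n^{a_n+2\pi\delta_n},r_n^{b_n-2\pi\delta_n}]$ since $a_n<1-\alpha<1<1+\alpha<b_n$; this gives $\log m(r_n,f^m)\ge(1-\delta_n)\log M(r_n,f^m)$, and combining with $r_{n+m}=|f^m(f^n(z_0))|\ge m(r_n,f^m)$ yields $r_{n+m}\ge M(r_n,f^m)^{1-\delta_n}$. (Equivalently this is Lemma~\ref{delta}(a), whose hypothesis holds because $U_n\supset B_n\supset A(r_n^{1-2\pi\delta_n},r_n^{1+2\pi\delta_n})$ for large $n$.)

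For part~(b) the key step is to apply the covering Theorem~\ref{parta} to $f^m$ on the annulus $B_n=A(r_n^{a_n},r_n^{b_n})$. Its hypotheses hold for large $n$, uniformly in $m$: $0\notin f^m(B_n)$ since $|f^m|>1$ on $U_n\supset B_n$; $r_n^{b_n}/r_n^{a_n}=r_n^{b_n-a_n}\ge r_n^{2\alpha}\ge K^2$; and $r_n^{a_n}\ge R_0$ by the first paragraph. Theorem~\ref{parta} then gives $f^m(B_n)\supset A(M(r_n^{a_n},f^m),M^m(r_n^{b_n}/K,f))\supset A'$, using $M^m(\cdot,f)\ge M(\cdot,f^m)$. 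Next I would locate $A'$ precisely using part~(a) and the convexity Theorem~\ref{convex}: since $a_n<1$, convexity gives $\log M(r_n^{a_n},f^m)\le a_n\log M(r_n,f^m)\le a_n(1-\delta_n)^{-1}\log r_{n+m}\le(a_n+\delta_n)\log r_{n+m}$ (the last step since $a_n+\delta_n\le1$), so $M(r_n^{a_n},f^m)\le r_{n+m}^{a_n+\delta_n}<r_{n+m}$; and since $b_n>1$, we have $r_n^{b_n}/K\ge r_n^{b_n-\delta_n}$ for large $n$, and then convexity gives $\log M(r_n^{b_n}/K,f^m)\ge(b_n-\delta_n)\log M(r_n,f^m)\ge(b_n-\delta_n)\log r_{n+m}$, so $M(r_n^{b_n}/K,f^m)\ge r_{n+m}^{b_n-\delta_n}>r_{n+m}$. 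Hence $A'$ is an annulus centred at $0$ that lies inside $f^m(B_n)\subset U_{n+m}$ and contains $f^{n+m}(z_0)$, so $A'\subset B_{n+m}$ by maximality of $B_{n+m}$. Combining: $f^m(B_n)\cap B_{n+m}\supset A'\supset A(r_{n+m}^{a_n+\delta_n},r_{n+m}^{b_n-\delta_n})$, which is part~(b).

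For part~(c) I would again use Theorem~\ref{main2b}(b) applied to $f^m$ on $B_n$: for $|z|=\rho\in[r_n^{a_n+2\pi\delta_n},r_n^{b_n-2\pi\delta_n}]$ one has $\log|f^m(z)|\ge(1-\delta_n)\log M(\rho,f^m)$, and for large $n$ the annulus $C_n$ has all its radii in this interval since $b_n(1-3\pi\delta_n)<b_n-2\pi\delta_n$ (as $b_n>1$). For $z\in C_n$ the upper estimate $|f^m(z)|\le M(r_n^{b_n(1-3\pi\delta_n)},f^m)<M(r_n^{b_n}/K,f^m)$ is immediate (since $r_n^{3\pi b_n\delta_n}\ge K$ and $M(\cdot,f^m)$ is strictly increasing), and for the lower estimate I would combine Harnack with convexity: $\log|f^m(z)|\ge(1-\delta_n)\log M(r_n^{a_n+2\pi\delta_n},f^m)\ge(1-\delta_n)\frac{a_n+2\pi\delta_n}{a_n}\log M(r_n^{a_n},f^m)>\log M(r_n^{a_n},f^m)$, the last inequality because $(1-\delta_n)(a_n+2\pi\delta_n)>a_n$ for large $n$ (equivalently $a_n<2\pi(1-\delta_n)$, clear since $a_n<1$). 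Thus $f^m(C_n)\subset A'$, and hence $f^m(C_n)\subset B_{n+m}$ by part~(b). Most of this is routine bookkeeping with exponents; the points needing care are applying Theorem~\ref{convex} in the correct direction according to whether the exponent exceeds $1$ (for $b_n-\delta_n$, $(a_n+2\pi\delta_n)/a_n$) or lies below $1$ (for $a_n$), and routing (c) through (b) via the single annulus $A'$. The only substantial external input is the covering Theorem~\ref{parta}, and the main obstacle I anticipate is precisely verifying its radius hypothesis $r_n^{a_n}\ge R_0$ --- which is where Baker's escape property $\operatorname{dist}(0,U_n)\to\infty$ enters.
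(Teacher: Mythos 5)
Your proposal is correct and follows essentially the same route as the paper: part (a) via the trivial upper bound and the Harnack estimate of Theorem~\ref{main2b}(b), part (b) via the covering Theorem~\ref{parta} on $B_n$ combined with Theorem~\ref{convex} and part (a) to trap the intermediate annulus $A'$ inside $B_{n+m}$, and part (c) by showing $f^m(C_n)\subset A'$. The only cosmetic difference is in part (c), where the paper simply cites Theorem~\ref{poly} (whose proof contains exactly the Harnack-plus-convexity computation you carry out inline) to get $f^m(C_n)\subset f^m(A_{n,m})=A(M(r_n^{a_n},f^m),M(r_n^{(b_n-2\pi\delta_n)(1-\delta_n)},f^m))\subset B_{n+m}$.
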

\begin{proof}
{\it (a)} Since $r_n = |f^n(z_0)|$, for $n \in \N$, it is clear that, for $m,n \in \N$, we have $r_{n+m} \leq M(r_n,f^m)$. Also, it follows from Theorem~\ref{main2b} part (b) that, for $m \in \N$ and large $n \in \N$, $r_{n+m} \geq M(r_n,f^m)^{1- \delta_n}$.

{\it (b)} We now recall from Theorem~\ref{main1} that there exists $\alpha > 0$ such that
\begin{equation}\label{alpha}
a_n \leq 1 - \alpha \;\mbox{ and }\; b_n \geq 1 + \alpha, \mbox{
for large } n \in \N.
\end{equation}
In particular, $r_n^{b_n}/r_n^{a_n}\ge r_n^{2\alpha}$. Thus, by
Theorem~\ref{parta} there exists an absolute
constant $K>0$ such that, for $m \in \N$ and large $n \in N$,
\begin{align}\label{fmBn}
 f^m(B_n) &\supset A(M(r_n^{a_n},f^m), M^m(r_n^{b_n}/K,f))\\
 &\supset A(M(r_n^{a_n},f^m), M(r_n^{b_n}/K,f^m))\notag.
\end{align}
    It follows from Theorem~\ref{convex},~part (a) of this result and~\eqref{alpha} that, for $m \in \N$ and large $n \in N$,
\begin{equation}\label{a1}
M(r_n^{a_n},f^m) \leq M(r_n,f^m)^{a_n} \leq
r_{n+m}^{a_n/(1-\delta_n)} \leq r_{n+m}^{a_n(1+\delta_n/(1-
\alpha))} \leq r_{n+m}^{a_n +\delta_n}.
\end{equation}
Also, it follows from~\eqref{alpha} that if $n$ is sufficiently large, then $b_n - \delta_n > 1$. So, by Theorem~\ref{convex} and part (a) of this result, for $m \in \N$ and large $n \in N$,
\begin{equation}\label{b1}
 M(r_n^{b_n}/K,f^m) \geq M(r_n^{b_n - \delta_n}, f^m) \geq M(r_n,f^m)^{b_n - \delta_n} \geq r_{n+m}^{b_n - \delta_n}.
\end{equation}
It follows from~\eqref{fmBn},~\eqref{a1} and~\eqref{b1} that, for $m \in \N$ and large $n \in N$,
\[
f^m(B_n) \supset A(M(r_n^{a_n},f^m), M^m(r_n^{b_n}/K,f)) \supset A(r_{n+m}^{a_n +\delta_n},r_{n+m}^{b_n - \delta_n})
\]
with $a_n+\delta_n<b_n-\delta_n$, and hence that
\begin{equation}\label{Bnm1}
B_{n+m} \supset A(M(r_n^{a_n},f^m), M^m(r_n^{b_n}/K,f)).
\end{equation}

{\it (c)} It follows from Theorem~\ref{poly} and the definition
of the annulus $A_{n,m}$ in the statement of Theorem~\ref{poly} that, for
$m \in \N$ and large $n \in N$,
\[
f^m(C_n) \subset f^m(A_{n,m}) = A(M(r_n^{a_n},f^m),M(r_n^{(b_n - 2\pi\delta_n)(1-\delta_n)},f^m)).
\]
Thus, by~\eqref{Bnm1}, for $m \in \N$ and large $n \in N$, we have $f^m(C_n)\subset B_{n+m}$.
This completes the proof.
\end{proof}

\section{Geometric properties of multiply connected wandering domains}
\setcounter{equation}{0}
In this section we consider the geometric properties of the images of a {\mconn} {\wand} of an {\ef} and prove Theorem~\ref{main2c}. We begin by recalling some basic properties of the boundary components of such a domain which will be used several times. First recall that for any bounded domain $U$ the outer boundary component of $U$, denoted by $\partial_{\infty}U$, is the boundary of the unbounded component of $\C\setminus U$ and the inner boundary component of $U$, denoted by $\partial_{0}U$, is the boundary of the component of $\C\setminus U$ that contains~$0$, if there is one.

Now let $U$ be a {\mcwd}. Since $f^n:U\to U_n$ is a proper map, each boundary component of $U$ is mapped by $f^n$, $n\in\N$, to a boundary component of $U_n=f^n(U)$. In particular, since $\partial U_n\subset f^n(\partial U)$, the outer boundary component of $U$ maps to the outer boundary component of $U_n$. Since there exists $N\in\N$ such that the image domains $U_n=f^n(U)$, $n\ge N$, surround~0, $U_n$ has an inner boundary component for $n\ge N$, and~$f$ maps the inner boundary component of such a $U_n$ to the inner boundary component of~$U_{n+1}$.

%Suppose now that for some $n\in\N$, the component $U_n$ has a bounded complementary component, $C$ say, that does not contain~$0$. Since
%$\partial C\subset J(f)$ we deduce that, for $m$ sufficiently large, $f^m(C)$ must be a subset of the component of $\C\setminus U_{n+m}$ that contains % $0$ and hence $f^m(\partial C)=\partial_0 U_{n+m}$. It follows that if $K$ is any component of $\partial U_n$ with $K\ne \partial_{\infty} U_n$, then %$f^m(K)=\partial_0 U_{n+m}$ for large~$m$. In particular, all the boundary components of $U_n$ are continua.

We now prove Theorem~\ref{main2c}.

\begin{proof}[Proof of Theorem~\ref{main2c}]
Let $B_n = A(r_n^{a_n},r_n^{b_n})$ be defined as in~\eqref{Bndef}. We begin by showing that the sequences $(a_n)$ and $(b_n)$ are convergent.

It follows from Theorem~\ref{cover} part~(b) that, for $m\in \N$ and large $n \in \N$, we have
\[
a_{n+m} \leq a_n +\delta_n\;\text{ and }\; b_{n+m} \geq b_n - \delta_n.
\]
Together with~\eqref{alpha}, this implies that
\begin{equation}\label{ab}
a_n \to a \in [0,1) \;\;\mbox{and}\;\; b_n \to b \in (1,\infty],
\mbox{ as } n \to \infty.
\end{equation}

Next we prove part~(b). Clearly $\underline{b}_{\,n} \geq b_n$ and $\underline{b}_{\,n}/\underline{a}_{\,n} \geq b_n/a_n$ for $n\in\N$.
We now show that
\[b \geq \underline{b}_{\,n}\;\;\text{and}\;\;b/a\geq \underline{b}_{\,n}/\underline{a}_{\,n},\]
for sufficiently large $n \in\N$. To do this we take $N\in\N$ so large that $r_n^{\underline{a}_{\,n}}\ge R$ for $n\ge N$, where $R>0$ is the constant in Theorem~\ref{convex}. Then take $\eps > 0$ so small that $a_n +\eps < 1<b_n-\eps$ for $n\ge N$, which is possible by Theorem~\ref{main1}.

Fix $n\ge N$ and for $m \in \N$, let $w_m$ denote a point satisfying
\begin{equation}\label{zm}
|w_m| = r_n^{\underline{b}_{\,n} - \eps} \;\mbox{ and }\; |f^m(w_m)| =
M(r_n^{\underline{b}_{\,n} - \eps},f^m).
\end{equation}
 Then there exist $w_0$ and a subsequence $(m_k)$ with
 \begin{equation}\label{z0}
 |w_0| = r_n^{\underline{b}_{\,n}- \eps}\; \mbox{ and }\; w_0 = \lim_{k \to \infty} w_{m_k}.
 \end{equation}
We claim that $w_0 \in U_n$. If not, then $w_0$ belongs to a bounded component of the complement of $U_n$, so there exists $M \in \N$ such that $|f^M(w_0)| < r_{n+M}$ (because such a component lies in the interior of a Jordan curve in $U$ and so, by Theorem~\ref{main2a}, it must map under $f^m$ for large $m$ to a component of $\C\setminus U_{n+m}$ that is surrounded by $B_{n+m}$). Thus, for sufficiently large $k$, $|f^M(w_{m_k})| < r_{n+M}$ and hence
\[
 |f^{m_k}(w_{m_k})| < M(r_{n+M},f^{m_k-M}) \leq M(r_n,f^{m_k}) < M(r_n^{\underline{b}_{\,n}- \eps},f^{m_k}).
\]
This, however, contradicts~\eqref{zm} and so $w_0$ must be in $U_n$ as claimed.

It now follows from~\eqref{zm}, \eqref{z0} and Theorem~\ref{main2a} that, for $k$ sufficiently large,
\begin{equation}\label{Bnmk}
B_{n+m_k} \supset \{z:|z| = M(r_n^{\underline{b}_{\,n}- \eps},f^{m_k})\}.
\end{equation}
It follows from Theorem~\ref{convex} and Theorem~\ref{cover} part (a) that, for $k \in \N$,
\[
M(r_n^{\underline{b}_{\,n}- \eps},f^{m_k}) \geq M(r_n,f^{m_k})^{\underline{b}_{\,n} - \eps} \geq r_{n+m_k}^{\underline{b}_{\,n}- \eps}.
\]
Together with~\eqref{Bnmk}, this implies that, if $k$ is sufficiently large, then $b_{n+m_k} \geq \underline{b}_{\,n} - \eps$ and so $b \geq \underline{b}_{\,n} - \eps$. Since $\eps$ can be chosen to be arbitrarily small, this implies that $b \geq \underline{b}_{\,n}$ as required.

Next, with the same value of $n$ and $\eps > 0$ so small that $a_n + \eps < 1<b_n-\eps$, we let $z' \in U_n$ satisfy $|z'| = r_n^{\underline{a}_{\,n} + \eps}$.
Then, by Theorem~\ref{main2a},
\begin{equation}\label{Bnm}
B_{n+m} \supset \{z:|z| = |f^m(z')|\}, \mbox{ for large } m \in \N.
\end{equation}

It also follows from Theorem~\ref{convex} that, for $k \in \N$,
\[
M(r_n^{\underline{b}_{\,n} - \eps},f^{m_k}) \geq M(r_n^{\underline{a}_{\,n}+ \eps},f^{m_k})^{(\underline{b}_{\,n} - \eps)/(\underline{a}_{\,n}+\eps)} \geq |f^{m_k}(z')|^{(\underline{b}_{\,n} - \eps)/(\underline{a}_{\,n}+\eps)}.
\]
Together with~\eqref{Bnmk},~\eqref{Bnm} and Theorem~\ref{main2a}, this implies that, if $k$ is sufficiently large, then $b_{n+m_k}/a_{n+m_k} \geq (\underline{b}_{\,n}-
\eps)/(\underline{a}_{\,n} + \eps)$ and so, since~$\eps$ can be chosen
to be arbitrarily small, $b/a \geq
\underline{b}_{\,n}/\underline{a}_{\,n}$. This completes the proof of part~(b)(i).

To prove part~(b)(ii), we note that it follows from part~(b)(i)
that, for large~$n$, we have $b \geq \underline{b}_{\,n} \geq b_n$.
Since $b_n \to b$ as $n \to \infty$, this implies that
$\underline{b}_{\,n} \to b$ as $n \to \infty$. Similarly, for large
$n$, we have $b/a \geq \underline{b}_{\,n}/\underline{a}_{\,n} \geq
b_n/a_n$. Since $b_n/a_n \to b/a$ as $n \to \infty$, this implies
that $\underline{b}_{\,n}/\underline{a}_{\,n} \to b/a$ as $n \to \infty$. Thus $\underline{a}_{\,n} \to a$ as $n \to \infty$ because $\underline{b}_{\,n} \to b$ as $n \to \infty$. The fact that $\underline{a}_{\,n}\le \overline{a}_n\le a_n$, for large $n$, then implies that $\overline{a}_{n} \to a$ as $n \to \infty$.
\end{proof}

\section{Properties of the harmonic function~$h$}
\setcounter{equation}{0}

In this section we consider the harmonic function $h$ defined
in~\eqref{hdef}. First we prove Theorem~\ref{h-measure} and then
we study the properties of the level curves of $h$. Recall that
$h_n\to h$ locally uniformly in $U$, where the functions $h_n$
are defined in $\overline U$ by \eqref{hn}.

\begin{proof}[Proof of Theorem~\ref{h-measure}]
To prove part~(a) we must show that $a = \inf_{z \in U} h(z)$ and $b = \sup_{z \in U} h(z)$. We begin by noting from Theorem~\ref{main2a} that, if $z \in U$, then for large $n \in \N$,
\[
r_n^{a_n} \leq |f^n(z)| \leq r_n^{b_n},\;\; \mbox{so} \;\; a_n
\leq h_n(z) \leq b_n.
\]
Thus
\begin{equation}\label{hbounds}
 a \leq h(z) \leq b,\;\;\mbox{for } z \in U.
\end{equation}
Now suppose that $1 < b' < b'' < b$.  It follows from~\eqref{ab} that, for large $n \in \N$, we have
\[
b_n > b'' + 2\pi \delta_n \;\;\mbox{and}\;\; b''(1 - \delta_n) >
b'.
\]
So, by Lemma~\ref{delta} part~(b), there exist $z \in U$ and $n \in \N$ with $h_n(z) = b''$ and
\[
h_{n+m}(z) \geq h_n(z) (1 - \delta_n) = b''(1 - \delta_n) > b',\;\;\mbox{for } m \in \N.
\]
Thus
\[
 \sup_{z \in U} h(z) \geq b',\;\;\mbox{for } b' \in (1,b).
\]
Together with~\eqref{hbounds}, this implies that $b = \sup_{z \in
U} h(z)$. The proof that $a = \inf_{z \in U} h(z)$ is similar, but
uses Lemma~\ref{delta} part~(c). This completes the proof of
part~(a).

We next show that, whatever the value of $b$, the function~$h$ has a continuous extension to $\partial U\setminus\partial_{\infty} U$ with
\begin{equation}\label{ha}
h(\zeta)=a,\;\;\text{for  }\zeta\in \partial U\setminus\partial_{\infty} U.
\end{equation}
Let $K$ be any component of $\partial U\setminus\partial_{\infty} U$. Then there exists a Jordan curve $\gamma$ in $U$ that contains $K$ in its interior int\,$(\gamma)$.

For large $n$ we have $f^n(\gamma)\subset C_n\subset B_n$, by Theorem~\ref{main2a}, so
\[
f^n({\rm int}\,(\gamma))\subset \{z:|z|<r_n^{b_n}\},
\]
and thus, by the definitions of $a_n$ and $\underline{a}_{\,n}$,
\[
f^n({\rm int}\,(\gamma)\cap\partial U)\subset \overline{A}(r_n^{\underline{a}_{\,n}},r_n^{a_n}).
\]
Since $a_n\to a$ as $n\to\infty$ and $\underline{a}_{\,n}\to a$ as $n\to\infty$, by Theorem~\ref{main2c}, we deduce that $h_n\to a$ uniformly on ${\rm int}\,(\gamma)\cap\partial U$. Clearly $h_n\to h$ as $n\to\infty$ uniformly on $\gamma$. Thus $h_n$ converges uniformly on $\partial({\rm int}\,(\gamma)\cap U)$ and hence also on $\overline{{\rm int}\,(\gamma)\cap U}$. In particular, $h=\lim_{n\to\infty}h_n$ has a continuous extension to ${\rm int}\,(\gamma)\cap\partial U$, and hence to $K$, with value $a$ there. This proves part~(b).

Part~(c) follows from part~(b). Each point of $\partial_{\infty}U$ is
regular for the Dirichlet problem because $\partial_{\infty}U$ is a continuum, and each
point of $\partial U\setminus \partial_{\infty}U$ is regular
because, by part~(b), the function $u(z)=a-h(z)$ is harmonic and negative in~$U$ with
limit~$0$ at each point of $\partial U\setminus
\partial_{\infty}U$, and so is a `barrier' at each point
of this set; see \cite[page~88]{tR95} for details of
regular boundary points and barriers.

To prove part~(d) recall that, for $z\in U$, the function $\omega(z,\partial_{\infty}U,U)$ is defined by the Perron method (see \cite[Chapter~4]{tR95}) as
\[
\omega(z,\partial_{\infty}U,U)=\sup\{u(z): u\in {\mathcal P}\},
\]
where ${\mathcal P}$ is the Perron family associated with the characteristic function of $\partial_{\infty} U$:
\[
{\mathcal P}=\{u: u\text{ is subharmonic in }U,\; \limsup_{z\to \zeta} u(z)\le \chi^{_{_{}}}_{\partial_{\infty}U}(\zeta),\text{ for }\zeta\in \partial U\}.
\]
For large~$n$, we have
\[
h_n(z)\ge
\left\{
 \begin{array}{ll}
 \underline{a}_{\,n},&\text{for }z\in \partial U,\\
 b_n,&\text{for }z\in\partial_{\infty}U.
 \end{array}
 \right.
\]
So, if $u\in {\mathcal P}$, then, by the maximum principle \cite[Theorem~2.3.1]{tR95},
\[
\frac{h_n(z)-\underline{a}_{\,n}}{b_n-\underline{a}_{\,n}}\ge u(z),\;\;\text{for
}z\in U,
\]
and hence, by the definition of $\omega(z,\partial_{\infty}U,U)$ above,
\begin{equation}\label{hn-lower}
\frac{h_n(z)-\underline{a}_{\,n}}{b_n-\underline{a}_{\,n}}\ge  \omega(z,\partial_{\infty}U,U),\;\;\text{for
}z\in U.
\end{equation}
%As pointed out earlier, each of the boundary components of $U$ is a continuum, so $U$ is regular for the Dirichlet problem.

Suppose now that $b=\infty$. Then we deduce from \eqref{hn-lower} on letting $n\to\infty$ that $\omega(z,\partial_{\infty}U,U)=0$, for $z\in U$, which proves part~(d)(ii).

Suppose next that $b<\infty$. Then we deduce from \eqref{hn-lower} and Theorem~\ref{main2c} that
\begin{equation}\label{h-lower}
h(z)\ge a+(b-a)\omega(z,\partial_{\infty}U,U),\;\;\text{for }z\in
U.
\end{equation}
On the other hand, by \eqref{ha} and the fact that $b=\sup_{z\in U}h(z)$,  we have
\[
\limsup_{z\to\zeta} \frac{h(z)-a}{b-a}\le \chi^{_{_{}}}_{\partial_{\infty}U}(\zeta),\;\;\text{for }\zeta\in\partial U.
%\left\{
% \begin{array}{ll}
% 0,&\text{for }\zeta\in \partial U\setminus \partial_{\infty}U,\\
% 1,&\text{for }\zeta\in\partial_{\infty}U,
% \end{array}
% \right.
\]
Thus we deduce, by the definition of $\omega(z,\partial_{\infty}U,U)$, that
\[
\frac{h(z)-a}{b-a}\le \omega(z,\partial_{\infty}U,U),\;\;\text{for }z\in U,
\]
which, together with \eqref{h-lower}, proves part~(d)(i).
\end{proof}

Next we consider the level sets of the harmonic function $h$ defined in~\eqref{hdef} and give a precise description of the location of the images of these level sets. For each $n \in \N$, we introduce the function
\begin{equation}\label{hntilde}
\tilde{h}_n(z)=\lim_{m\to \infty} \frac{\log |f^m(z)|}{\log |f^m(f^n(z_0))|},\;\;z\in U_n.
\end{equation}
Then $\tilde h_n$ is a positive harmonic function in $U_n$, since $\tilde h_n(f^n(z))=h(z)$ for $z\in U$, and the level sets of $h$ in $U$ correspond to those of $\tilde h_n$ in $U_n$ under the function~$f^n$.

The following result describes the location of certain level curves of $\tilde h_n$.

\begin{theorem}\label{curves}
Let $f$ be a {\tef} with a {\mconn} {\wand} $U$, let $h$ be the harmonic function defined in \eqref{hdef}, let $a_n$, $b_n$, $\delta_n$ and $B_n$ be defined as in \eqref{Bndef} and~\eqref{Cn}, let $a$ and $b$ be defined as in Theorem~\ref{main2c}, and let
\[
\Gamma_{n,l} = f^n(\{z \in U: h(z) = l\}),\;\;\mbox{for } l \in (a,b).
\]
Then, for large $n \in \N$ and $l \in (a_n/(1 - \delta_n),
b_n(1 - 3\pi \delta_n))$, the set $\Gamma_{n,l}$ has a component
$\gamma_{n,l}$ which is a Jordan curve surrounding $0$ with
\begin{equation}\label{levelann}
\gamma_{n,l} \subset
\left\{ \begin{array}{ll} \overline{A}(r_n^{l(1-3\eps_n/(1-a))},
r_n^{l(1 + 2\delta_n)}), & \mbox{if } l \geq 1, \\
\overline{A}(r_n^{l(1-\delta_n)},r_n^{l + 3\eps_n}), & \mbox{if } l \leq 1,
\end{array}\right.
\end{equation}
where $\eps_n=\max\{\delta_n,a_n-a\}$.

Further, for $m \in \N$ and large $n\in\N$, the image set $f^m(\gamma_{n,l})$ is also a Jordan curve surrounding~$0$ and
\[
f^m(\gamma_{n,l}) \subset B_{n+m},\;\;\text{for }l \in ((a_n + 2\pi \delta_n)/(1 - \delta_n), b_n(1 - 3\pi \delta_n)/(1 + 2\delta_n)).
\]
\end{theorem}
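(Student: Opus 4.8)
The plan is to carry everything over to the image domains using the functions $\tilde h_n$ of \eqref{hntilde}. Since $\tilde h_n$ is positive harmonic in $U_n$, $\tilde h_n\circ f^n=h$, and $\tilde h_{n+m}\circ f^m=\tilde h_n$ on $U_n$, we have $\Gamma_{n,l}=\{w\in U_n:\tilde h_n(w)=l\}$, and there are three things to prove: that $\Gamma_{n,l}$ is confined to the annulus in \eqref{levelann}; that it has a unique component $\gamma_{n,l}$ surrounding $0$, and that this component is a Jordan curve; and that $f^m(\gamma_{n,l})$ is again such a Jordan curve lying in $B_{n+m}$.

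For the location estimate \eqref{levelann} I would estimate $h_n(z)=\log|f^n(z)|/\log r_n$ for $z\in U$ with $h(z)=l$. Since the full sequence $h_n\to h$ locally uniformly (Theorem~\ref{main1a}), $h_n(z)\to l$; so for large $n$ the concentric annulus $A(r_n^{h_n(z)-2\pi\delta_n},r_n^{h_n(z)+2\pi\delta_n})$ lies in $U_n$, and Lemma~\ref{delta}(b) (if $l\ge 1$) or Lemma~\ref{delta}(c) (if $l\le1$) applies with $z$. Letting $m\to\infty$, Lemma~\ref{delta}(b) gives $l\ge(1-\delta_n)h_n(z)$, hence the outer bound $|f^n(z)|\le r_n^{l(1+2\delta_n)}$; a complementary lower estimate for $h_n(z)$, together with the convergence $\overline a_n\to a$ from Theorem~\ref{main2c} (which is where $\eps_n=\max\{\delta_n,a_n-a\}$ enters), gives the inner bound, and the case $l\le1$ is symmetric. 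As $\Gamma_{n,l}=f^n(\{h=l\})$, this bounds all of $\Gamma_{n,l}$; comparing \eqref{levelann} with \eqref{Cn} one sees that for $l$ in the restricted range of the final display $\Gamma_{n,l}\subset C_n$.

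For the topological part I would first produce a component of $\Gamma_{n,l}$ surrounding $0$: combining Theorems~\ref{main2b}, \ref{convex} and \ref{cover}(a) shows that $\tilde h_n(w)$ is uniformly close to $\log|w|/\log r_n$ on circles well inside $B_n$, so that $\tilde h_n-l$ changes sign between two such concentric circles, and a standard connectedness argument then yields a component of $\Gamma_{n,l}$ meeting every arc joining them. That this component, call it $\gamma_{n,l}$, is the only component of $\Gamma_{n,l}$ surrounding $0$ follows from the maximum principle, since two nested such components would bound an annular region in $U_n$ on whose boundary $\tilde h_n\equiv l$. To show $\gamma_{n,l}$ is a Jordan curve it suffices to show $\tilde h_n$ has no critical point on it, for then $\Gamma_{n,l}$ is locally a smooth curve near $\gamma_{n,l}$ and the compact connected curve $\gamma_{n,l}$ surrounding $0$ must be a Jordan curve. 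For this I use Theorem~\ref{poly}: for fixed $m$ and $w\in A_{n,m}\supset C_n$ we have $f^m(w)=q_{n,m}\phi_{n,m}(w)^{d_{n,m}}$, so $G_m(w):=\log|f^m(w)|/\log r_{n+m}$ is harmonic on $A_{n,m}$ with complex gradient $\frac{d_{n,m}}{\log r_{n+m}}\,\frac{\phi_{n,m}'(w)}{\phi_{n,m}(w)}$; the degree bounds of Theorem~\ref{poly}(b), the distortion estimate \eqref{modulus} and the Koebe distortion theorem give $|\nabla G_m(w)|\ge c_1/(|w|\log r_n)$ on a fixed subannulus $W_n$ of $C_n$, with $c_1>0$ independent of $m$. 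On the other hand $\tilde h_n(w)=\tilde h_{n+m}(f^m(w))$, and \eqref{levelann} at level $n+m$ (admissible since $r_{n+m}\ge r_n$, so $\delta_{n+m}\le\delta_n$) puts $f^m(w)$ inside $C_{n+m}$ for $w$ in the relevant part of $C_n$, whence Theorem~\ref{main2b} at level $n+m$ gives $|\tilde h_n-G_m|\le C\delta_{n+m}\to0$ on $W_n$. Thus $G_m\to\tilde h_n$ uniformly on $W_n$, so $\nabla G_m\to\nabla\tilde h_n$ locally uniformly, and $|\nabla\tilde h_n|\ge c_1/(|w|\log r_n)>0$ on $W_n$, which contains $\gamma_{n,l}$ for $l$ in the restricted range.

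For the image statement, fix $l$ in the restricted range, so $\gamma_{n,l}\subset C_n\subset A_{n,m}$. Then $f^m(\gamma_{n,l})\subset f^{n+m}(\{h=l\})=\Gamma_{n+m,l}$, it is connected, and it surrounds $0$ because $f^m$ maps $A_{n,m}$ properly of degree $d_{n,m}\ge1$ onto an annulus not containing $0$, hence multiplies winding numbers about $0$; by what has already been proved (valid at level $n+m$), $\Gamma_{n+m,l}$ has a unique component surrounding $0$, a Jordan curve $\gamma_{n+m,l}$ lying in $C_{n+m}\subset B_{n+m}$. Since a connected subset of a Jordan curve which surrounds $0$ must be the whole curve, $f^m(\gamma_{n,l})=\gamma_{n+m,l}$, as required. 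I expect the main obstacle to be the no-critical-point step: the essential ingredient is the $m$-independent lower bound for $|\nabla G_m|$, which is what survives the passage to the limit, and extracting it cleanly from Theorem~\ref{poly} and the Koebe theorem. A further technical point is that for $l$ near the endpoints of the full range $(a_n/(1-\delta_n),b_n(1-3\pi\delta_n))$, where $\gamma_{n,l}$ need not lie in $C_n$, the no-critical-point statement has to be pushed out to a somewhat larger annular subregion of $U_n$ by the same ideas.
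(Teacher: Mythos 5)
Your overall strategy --- working with the functions $\tilde h_n$ of \eqref{hntilde}, locating the level set by one-sided estimates of the type in Lemma~\ref{delta}, and deducing the last claim from ``level sets map to level sets'' together with $\gamma_{n,l}\subset C_n$ and Theorem~\ref{cover}(c) --- is the same as the paper's, and the final paragraph of your proposal is essentially correct. But there is a genuine gap in the middle: you never actually prove the two ``hard'' halves of \eqref{levelann}, namely the lower bound $r_n^{l(1-3\eps_n/(1-a))}$ when $l\ge 1$ and the upper bound $r_n^{l+3\eps_n}$ when $l\le 1$. These are exactly the bounds in which $a$ and $\eps_n=\max\{\delta_n,a_n-a\}$ appear, and they do not follow from ``a complementary lower estimate for $h_n(z)$ together with $\overline a_n\to a$'': the convergence $a_n\to a$ carries no rate information and, more importantly, the Lemma~\ref{delta}-type estimates only control $\tilde h_n$ on circles lying in $U_n$, which says nothing about how small $\tilde h_n$ can be at radii near $r_n^{l}$ for $l$ well above $1$. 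The paper's proof of these bounds is a separate argument: it uses $\tilde h_n\ge a$ on all of $U_n$ (Theorem~\ref{h-measure}(a)), the fact that the circular mean $I(r)$ of $\tilde h_n$ is log-linear on $(r_n^{a_n},r_n^{b_n})$, Harnack's inequality in the form $(1-\delta_n)\tilde h_n(z)\le I(|z|)$, and comparison of $\tilde h_n$ (resp.\ $I$) with explicit log-linear minorants/majorants $\underline L,\overline L$ pinned at the radii $r_n^{a_n}$ and $r_n^{1\pm O(\delta_n)}$; the coefficient $3\eps_n/(1-a)$ comes out of that computation. Without some such comparison-function argument your location estimate is incomplete, and since the restricted range of $l$ in the final display is calibrated against \eqref{levelann} and \eqref{Cn}, the inclusion $\gamma_{n,l}\subset C_n$ that your last step relies on is not yet established.

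Your route to the Jordan-curve property is also genuinely different from the paper's and carries an unresolved difficulty that you yourself flag: the non-vanishing-gradient argument via Theorem~\ref{poly} and Koebe distortion only works where the monomial representation and the distortion estimate \eqref{modulus} are available, i.e.\ inside $A_{n,m}$ and in fact only for $r_n^{a_n+4\pi\delta_n}\le|z|\le r_n^{b_n(1-5\pi\delta_n)}$, whereas for $l$ near the endpoints of the full range $(a_n/(1-\delta_n),b_n(1-3\pi\delta_n))$ the curve $\gamma_{n,l}$ can leave this region. The paper avoids the issue entirely by defining $\gamma_{n,l}$ as the outer boundary of the component of $f^n(\{z\in U:h(z)<l\})$ surrounding $0$, using \eqref{hb1}, \eqref{ha1} and the maximum principle; you should either adopt that definition or supply the ``pushed out'' gradient estimate you allude to, which is not routine. (A smaller point: your uniqueness argument via the maximum principle applied to the region between two nested components needs care, since that region may contain complementary components of $U_n$ on whose boundaries $\tilde h_n$ is not known to equal $l$; fortunately the theorem only asserts existence of one such component.)
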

\begin{proof}
We first note that we have not proved that $a_n \geq a$, for $n \in \N$, so we do not assume this in the proof.

We show that, if $n$ is sufficiently large and $l \in (a_n/(1 - \delta_n), b_n(1 - 3 \pi \delta_n))$, then $\Gamma_{n,l}$ contains a component $\gamma_{n,l}$ which is a Jordan curve surrounding $0$. First consider the case that $1 \leq l < b_n(1 - 3\pi \delta_n)$. In this case, $1 < l/(1 - \delta_n) < b_n - 2 \pi \delta_n$. It follows from Theorem~\ref{main2b} part~(b), Theorem~\ref{convex} and Theorem~\ref{cover} part (a) that, for $m \in \N$, large $n \in \N$ and $c \in (l/(1-\delta_n),b_n - 2\pi \delta_n)$,
\begin{eqnarray*}
m(r_n^{c},f^m)& \geq & M(r_n^c,f^m)^{1 - \delta_n} > M(r_n^{l/(1 - \delta_n)},f^m)^{1-\delta_n}\\
& \geq & M(r_n,f^m)^l \geq r_{n+m}^l.
\end{eqnarray*}
Thus, for large $n\in\N$, $z \in U$ and $1 \leq l < b_n(1 - 3\pi \delta_n)$,
\begin{equation}\label{hb1}
\mbox{ if } |f^n(z)| = r_n^{c}, \mbox{ where } c \in (l/(1-\delta_n),b_n - 2\pi \delta_n), \mbox{ then } h(z) \ge l.
\end{equation}

Next, consider the case that $a_n/(1 - \delta_n) <l \le 1$. In this case, $a_n < l(1 - \delta_n) <1$. It follows from Theorem~\ref{convex} and Theorem~\ref{cover} part (a) that, for $m\in\N$, large $n \in \N$ and $c \in (a_n, l(1 - \delta_n))$,
\[
M(r_n^c,f^m) < M(r_n^{l(1-\delta_n)},f^m) \leq M(r_n,f^m)^{l(1-\delta_n)} \leq r_{n+m}^l.
\]
Thus, for large $n\in\N$, $z \in U$ and $a_n/(1-\delta_n)<l \le 1$,
\begin{equation}\label{ha1}
\mbox{ if } |f^n(z)| = r_n^{c}, \mbox{ where } c \in (a_n, l(1 - \delta_n)),
\mbox{ then } h(z) \le l.
\end{equation}

It follows from \eqref{hb1} and~\eqref{ha1} together with the maximum principle that, if~$n$ is sufficiently large, then, for $l \in (a_n/(1-\delta_n), b_n(1 - 3\pi \delta_n))$, the set $\gamma_{n,l}$ can be taken to be the outer boundary of the component of the set $f^n(\{z \in U: h(z) < l\})$ that surrounds~$0$, and is therefore a Jordan curve.

The statement \eqref{hb1} implies that for large $n\in\N$ the upper bound for $\gamma_{n,l}$ in \eqref{levelann} holds in the case $l\ge 1$ and \eqref{ha1} implies that for large $n\in\N$ the lower bound for $\gamma_{n,l}$ holds in the case $l\le 1$.

To obtain the other bounds for $\gamma_{n,l}$ in \eqref{levelann}, we use the function $\tilde h_n$ defined in~\eqref{hntilde}. Note that
$\gamma_{n,l}$ is the unique component of $\{z\in U_n:\tilde h_n(z)=l\}$ that surrounds~$0$, by the maximum principle.

By \eqref{hb1}, for sufficiently large $n\in\N$ and $1 \leq l < b_n(1 - 3\pi \delta_n)$,
\begin{equation}\label{hnb1}
\mbox{ if } |z| = r_n^{c}, \mbox{ where } c \in (l/(1-\delta_n),b_n - 2\pi \delta_n), \mbox{ then } \tilde h_n(z) \ge l,
\end{equation}
and, by \eqref{ha1}, for sufficiently large $n\in\N$ and $a_n/(1 - \delta_n) <l \le 1$,
\begin{equation}\label{hna1}
\mbox{ if } |z| = r_n^{c}, \mbox{ where } c \in (a_n, l(1 - \delta_n)),
\mbox{ then } \tilde h_n(z) \le l.
\end{equation}
From now on we assume in this proof that $n$ is sufficiently large for \eqref{hnb1} and \eqref{hna1} to hold.

By Theorem~\ref{h-measure} part~(a),
\begin{equation}\label{ahb}
a\le\tilde h_n(z)\le b, \;\text{ for } z\in U_n,
\end{equation}
and, by \eqref{hnb1} with $l=1$,
\[
\tilde h_n(z)\ge 1,\;\text{ for }|z|= r_n^{1 +2\delta_n}.
\]
Thus, by the maximum principle, in the annulus $A(r_n^{a_n},r_n^{1 +2\delta_n})$ the function $\tilde h_n$ majorises  the log-linear function $\underline L(r)$ that satisfies
\[
\underline L(r_n^{a_n})=a\quad\text{and}\quad \underline L(r_n^{1 +2\delta_n})=1.
\]
So, by a calculation, for $z\in A(r_n^{a_n},r_n^{1 +2\delta_n})$,
\begin{equation}\label{L0}
\tilde h_n(z)\ge \underline L(|z|)=\left(\frac{1-a}{1+2\delta_n-a_n}\right)\frac{\log |z|}{\log r_n}+\frac{a(1+2\delta_n)-a_n}{1+2\delta_n-a_n}.
\end{equation}
Thus, for $a_n<l\le 1$ and $|z|=r_n^{l+\eps}$, where $l+\eps<1+2\delta_n$, we have
\[
\tilde h_n(z)\ge \underline L(r_n^{l+\eps})=\left(\frac{1-a}{1+2\delta_n-a_n}\right)(l+\eps)+\frac{a(1+2\delta_n)-a_n}{1+2\delta_n-a_n}\ge l,
\]
provided that
\[
\eps\ge (a_n-a)\left(\frac{1-l}{1-a}\right)+2\delta_n\left(\frac{l-a}{1-a}\right).
\]
In particular, for $a_n<l\le 1$, we deduce that
\[
\mbox{ if } |z| = r_n^{c}, \mbox{ where } c \in (l+\max\{a_n-a,0\} +2\delta_n,1+2\delta_n),
\mbox{ then } \tilde h_n(z) \ge l.
\]
This gives the upper bound for $\gamma_{n,l}$ in \eqref{levelann} in the case $l\le 1$.

%Next we use \eqref{hna1} to deduce that, for $a_n+2\pi\delta_n<l \le 1$,
%\begin{equation}\label{hnb2}
%\mbox{ if } |z| = r_n^{l(1-\delta_n)},\mbox{ then } \tilde h_n(z) \le l.
%\end{equation}
%Hence, for $z\in A(r_n^{a_n+2\pi\delta_n},r_n^{1 -\delta_n})$,
%\begin{equation}\label{L1}
%\tilde h_n(z)\le \overline L(z),\quad\text{where }\overline L(z)=\left(\frac{1}{1-\delta_n}\right)\frac{\log |z|}{\log r_n}.
%\end{equation}
%%Note that we also have
%%\begin{equation}\label{L2}
%%\underline L(z)\le \frac{\log |z|}{\log r_n}\le \overline L(z),\;\text{for }z\in A(r_n^{a_n+2\pi\delta_n},r_n^{1 -\delta_n}).
%%\end{equation}

Now let $I(r)$, $r\in(r_n^{a_n},r_n^{b_n})$, denote the mean value of $\tilde h_n$ on $\{z:|z|=r\}$. Then $I(r)$ is log-linear, since $\tilde h_n$ does not vanish in $U_n$. Also, by~\eqref{hna1} with $l=1$ and~\eqref{ahb},
\begin{equation}\label{I1}
I(r) \ge a,\;\text{for }r_n^{a_n}<r<r_n^{b_n},\;\;\text{and}\;\; I(r_n^{1 -\delta_n}) \le 1.
\end{equation}
Further,
\begin{equation}\label{I3}
(1-\delta_n)\tilde h_n(z)\le I(|z|),\;\text{ for }z\in \overline{A}(r_n^{a_n+2\pi\delta_n},r_n^{b_n-2\pi\delta_n}),
\end{equation}
by an application of Harnack's inequality similar to that in the proof of Theorem~\ref{Har}.

It follows from \eqref{I1} that
\begin{equation}\label{I4}
I(r)\le \overline L(r),\;\text{ for }r_n^{1-\delta_n}<r<r_n^{b_n},
\end{equation}
where $\overline L(r)$ is the log-linear function that satisfies
\[
\overline L(r_n^{a_n})=a\quad\text{and}\quad  \overline L(r_n^{1-\delta_n})=1;
\]
that is,
\[
\overline L(r)=\left(\frac{1-a}{1-\delta_n-a_n}\right)\frac{\log r}{\log r_n}+\frac{a(1-\delta_n)-a_n}{1-\delta_n-a_n}.
\]
By \eqref{I3} and \eqref{I4},
\[
(1-\delta_n)\tilde h_n(z)\le I(|z|)\le \overline L(|z|),\;\text{ for } r_n^{1-\delta_n}<|z|<r_n^{b_n-2\pi \delta_n}.
\]
Thus, for $1\le l< b_n-2\pi\delta_n$ and $|z|=r_n^{l-\eps}$, where
 $1-\delta_n<l-\eps$,
\[
(1-\delta_n)\tilde h_n(z)\le\overline L(r_n^{l-\eps})
=\left(\frac{1-a}{1-\delta_n-a_n}\right)(l-\eps)+\frac{a(1-\delta_n)-a_n}{1-\delta_n-a_n}
\le (1-\delta_n)l,
\]
provided that
\[
\eps\ge (a_n-a)\left(\frac{l-1}{1-a}\right)+\delta_n\left(\frac{2l-a-\delta_n l-a_nl}{1-a}\right).
\]
In particular, for $1\le l< b_n-2\pi\delta_n$, we deduce that
\[
\mbox{ if } |z| = r_n^{c}, \mbox{ where } c \in (1-\delta_n,l-l(\max\{a_n-a,0\}+2\delta_n)/(1-a)),
\mbox{ then } \tilde h_n(z) \le l.
\]
This gives the lower bound for $\gamma_{n,l}$ in \eqref{levelann} in the
case $l\ge 1$.

Further, since $\gamma_{n,l}$ is a level curve of $\tilde h_n$, it is not difficult to see that $f^m(\gamma_{n,l})$, $m\in\N$, must be a component of a level set of $\tilde h_{n+m}$ and so must also be a Jordan curve.

Finally, if $n$ is sufficiently large, then it follows from \eqref{levelann} that $\gamma_{n,l} \subset C_n$, for $l \in ((a_n + 2\pi\delta_n)/(1-\delta_n), b_n(1-3\pi\delta_n)/(1+2 \delta_n))$. Hence, for each $m \in \N$, we have $f^m(\gamma_{n,l}) \subset B_{n+m}$, by Theorem~\ref{cover} part~(c).
\end{proof}

\section{Connectivity properties of multiply connected wandering domains}\label{conn}
\setcounter{equation}{0}
We first prove Theorem~\ref{main3} which concerns the relationship between the connectivity of a multiply connected wandering domain and the location of the critical points.

\begin{proof}[Proof of Theorem~\ref{main3}]
Let $U$ be a multiply connected {\wand} and let $U_n=f^n(U)$
for $n\geq 0$. We need only prove part~(a) since parts~(b) and~(c) follow from part~(a) together with Theorem~B.

Suppose first that $\bigcup_{m=n}^\infty U_m$ contains no
critical points. We want  to show that $U_n$ is doubly connected.
Suppose that this is not the case. Then there exists a
compact, connected subset $C$ of $U_n$ such that
no doubly connected domain $A$ satisfies $C\subset A\subset U_n$.
By Theorem~\ref{main2a} we have
$f^{m-n}(C)\subset C_{m}$ for sufficiently large $m$, with the annulus
$C_{m}$ defined as in~\eqref{Cn}. For such an~$m$, let $A$ be the component of $f^{-(m-n)}(C_{m})$
that contains~$C$. Since $A$ contains no critical
points of $f^{m-n}$, it follows from the Riemann-Hurwitz formula \eqref{RieHur} that $A$ is doubly connected,
which is a contradiction. Thus we have proved that $U_n$ is doubly connected. This proves one direction of part~(a).

As mentioned in the introduction, the other direction of part~(a) was given already by Kisaka and Shishikura in Theorem~B. Indeed, if $c(U_n)=2$, then $c(U_{n+1})=2$ and $U_n$ contains no critical points of $f$, by the Riemann-Hurwitz formula. Thus the other direction of part~(a) follows.
\end{proof}

We now show that the inner and outer connectivities of a multiply connected wandering domain $U$ have similar properties to the connectivity of $U$ with one interesting exception described in part (c) of the following result.

First recall from~\eqref{Cn} and~\eqref{oi} that, if $z_0 \in U$ and $r_n = |f^n(z_0)|$, then, for large $n \in \N$,
\[
C_n = A\left(r_n^{a_n+2\pi\delta_n},r_n^{b_n(1-3\pi\delta_n)}\right),
\]
\[
U_n^{+} = U_n \cap \{z: |z| > r_n\}\;\;\text{and}\;\;U_n^{-} = U_n \cap \{z: |z| < r_n\}.
\]

\begin{theorem}\label{main4a}
Let $f$ be a {\tef} with a {\mconn} {\wand} $U$ and let $N\in \N$ be such that $f^m$ has no critical points in $C_n$, for $n\ge N$ and $m\in \N$ (which is the case for sufficiently large $N$ by Theorem~\ref{nocrits}). For $n\ge N$,
\begin{itemize}
\item[(a)] Theorem B and Theorem~\ref{main3} remain valid with $U_n$ and $U_m$ replaced by $U_n^-$ and $U_m^-$, respectively;
\item[(b)] if $c(U_N^-) < \infty$, then Theorem B and Theorem~\ref{main3} remain valid with $U_n$ and $U_m$ replaced by $U_n^+$ and $U_m^+$, respectively;

\item[(c)] if $c(U_N^-) = \infty$, then one of the following holds:
\begin{itemize}
\item[(i)] $c(U_n^+) = 2$, for $n \geq N$, and $\bigcup_{n=N}^{\infty}U_n^+$ contains no critical points of~$f$;
\item[(ii)] there exists $N_0 \geq N$ such that $c(U_n^+) = \infty$, for $N \leq n \leq N_0$, $c(U_n^+) = 2$, for $n > N_0$, and  $\bigcup_{n=N}^{N_0} U_n^+$ contains a finite non-zero number of critical points of $f$;
\item[(iii)] $c(U_n^+) = \infty$, for $n \geq N$, and $\bigcup_{n=N}^{\infty}U_n^+$ contains infinitely many critical points of $f$.
\end{itemize}
\end{itemize}
\end{theorem}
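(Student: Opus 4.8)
The plan is to carry the Riemann--Hurwitz / push-forward argument behind Theorem~B and Theorem~\ref{main3} over to the inner and outer parts of $U_n$. First I would replace the circle $\{z:|z|=r_n\}$ in the definitions by the level curve $\gamma_n:=\gamma_{n,1}$ of Theorem~\ref{curves} (taking $l=1$): for large $n$ this is a Jordan curve around $0$ lying in $C_n$, hence strictly between $\{|z|=r_n^{a_n}\}$ and $\{|z|=r_n^{b_n}\}$. Writing $W_n^-$ and $W_n^+$ for the parts of $U_n$ lying inside, respectively outside, $\gamma_n$, one sees that every ``inner'' complementary component of $U_n$ lies inside $\gamma_n$ while every ``outer'' one, together with the unbounded one, lies outside; both $W_n^\pm$ are connected, $c(W_n^-)=c(U_n^-)$ and $c(W_n^+)=c(U_n^+)$, and $W_n^\pm$ differs from $U_n^\pm$ only within $C_n$, which by Theorem~\ref{nocrits} contains no critical points of any $f^m$. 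Hence it is enough to prove all the statements with $W_n^\pm$ in place of $U_n^\pm$. I also record that $0\notin U_m$ for large $m$ (property~(b) before Theorem~\ref{main1a}), so $f$ and its iterates are zero-free on these $U_m$.

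The next step is to set up the relevant proper maps. Using $\tilde h_n=\tilde h_{n+1}\circ f$, the identity $f^m(\gamma_n)=\gamma_{n+m}$ of Theorem~\ref{curves}, and the maximum principle together with $\tilde h_n=1$ on $\gamma_n$ and $\tilde h_n\to a<1$ on $\partial U_n\setminus\partial_\infty U_n$ (Theorem~\ref{h-measure}(b)), one gets $\tilde h_n<1$ throughout $W_n^-$; hence $f^m(W_n^-)\subset W_{n+m}^-$ and $W_n^-$ is exactly one component of $(f^m)^{-1}(W_{n+m}^-)\cap U_n$, so $f^m:W_n^-\to W_{n+m}^-$ is proper, and counting preimages near $\gamma_n$ via the monomial factorisation of Theorem~\ref{poly} shows its degree is at least $d_{n,1}\to\infty$. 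On the outer side one works with the component $\widehat W_n^+$ of $(f^m)^{-1}(W_{n+m}^+)\cap U_n$ that contains a one-sided neighbourhood of $\gamma_n$; again $f^m:\widehat W_n^+\to W_{n+m}^+$ is proper of degree $\ge d_{n,1}\to\infty$. The subtlety here, and the source of the (b)/(c) dichotomy, is that $f$ need not map $W_n^+$ into $W_{n+1}^+$: an outer complementary component of $U_n$ can be carried onto an inner one --- equivalently $f$ (or some iterate) can have a zero in it, so that $0$ lies in its image --- producing a ``pocket'' of $W_n^+$ mapped onto $W_{n+1}^-$, so that in general $\widehat W_n^+\subsetneq W_n^+$.

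For part~(a), Riemann--Hurwitz~\eqref{RieHur} applied to $f:W_n^-\to W_{n+1}^-$ gives $c(U_n^-)-2=d_n^-(c(U_{n+1}^-)-2)+N_n^-$, where $N_n^-$ is the number of critical points of $f$ in $W_n^-$; since $d_n^-\ge1$ the sequence $(c(U_m^-))$ is non-increasing while finite, since $d_n^-\to\infty$ it must equal $2$ once finite and $m$ is large, and the $\infty$-convention in~\eqref{RieHur} propagates infinite inner connectivity. For the converse implications I would repeat the proof of Theorem~\ref{main3}: if $W_n^-$ is not doubly connected, choose a compact connected $C\subset W_n^-$ contained in no doubly connected subdomain of $W_n^-$; by Theorem~\ref{main2a} and the previous paragraph $f^{m-n}(C)$ lies in the doubly connected, critical-point-free ring $C_m\cap W_m^-$ for large $m$; the component $A$ of $(f^{m-n})^{-1}(C_m\cap W_m^-)$ through $C$ lies in $W_n^-$ (as $W_n^-$ is a full preimage-component of $W_m^-$), all its iterates lie in the $W_l^-$, and if these contain no critical point of $f$ then~\eqref{RieHur} forces $A$ to be doubly connected --- a contradiction. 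Combining these exactly as in the proof of Theorem~\ref{main3} gives part~(a). If in addition $c(U_N^-)<\infty$, then part~(a) yields $c(U_m^-)=2$ for $m\ge M$, say; from the inner Riemann--Hurwitz relation this gives $N_m^-=0$, so for large $m$ the critical points of $f$ in $U_m$ all lie in $W_m^+$ and $c(U_m)=c(U_m^+)$, whence Theorem~B and Theorem~\ref{main3} for $U_m$ translate directly into the same statements for $U_m^+$; moreover one checks that for large $n$ no zero of $f$ lies in an outer complementary component of $U_n$, so there are no pockets and $f:W_n^+\to W_{n+1}^+$ is itself proper, and running the Riemann--Hurwitz/push-forward argument verbatim on $W_n^+$ settles the finitely many remaining stages $N\le n<M$. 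This proves part~(b).

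Part~(c) is where the collapse of the outer connectivity is analysed. When $c(U_N^-)=\infty$ a pocket can be infinitely connected (it maps properly onto the infinitely connected $W_{l+1}^-$), so passing from $W_n^+$ to $\widehat W_n^+$ can absorb infinitely many outer complementary components at once. Applying~\eqref{RieHur} to $f:\widehat W_n^+\to W_{n+1}^+$ yields a relation $c(U_n^+)-2=\widehat d_n^+(c(U_{n+1}^+)-2)+\widehat N_n^+$ with $\widehat d_n^+\ge1$ and $\widehat d_n^+\to\infty$; this already shows $(c(U_m^+))$ is non-increasing and equals $2$ from the first finite value on, and, by examining how a drop from $\infty$ occurs (all but finitely many outer components are absorbed simultaneously), that $c(U_n^+)$ never takes a finite value $>2$. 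Hence $c(U_n^+)=\infty$ on an initial segment $[N,N_0]$ (possibly empty, possibly all of $[N,\infty)$) and $c(U_n^+)=2$ afterwards; the critical-point count in the relevant stages then follows from the Riemann--Hurwitz relations exactly as in part~(a), giving cases~(i), (ii), (iii) according as $\bigcup_{n\ge N}W_n^+$ contains no, finitely many but at least one, or infinitely many critical points of $f$. The main obstacle throughout is precisely this pocket analysis: understanding how, under iteration, an outer complementary component can be carried onto an inner one, how this affects the proper map $f:\widehat W_n^+\to W_{n+1}^+$ and its Riemann--Hurwitz relation, and in particular why the outer connectivity can only jump from $\infty$ directly to $2$; establishing the clean dichotomy between parts~(b) and~(c) --- why finite inner connectivity excludes the collapse phenomenon --- is the other delicate point.
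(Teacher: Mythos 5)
Your overall strategy is the paper's: split $U_n$ into inner and outer parts along a curve lying in $C_n$, show that the inner part maps properly onto $U_m^-$, apply the Riemann--Hurwitz formula, and for the outer part work with the preimage component $\widehat W_n^+$ of $W_{n+m}^+$ together with the ``pockets'' of $U_n^+$ that fall into $U_m^-$. Your part (a) and the construction of the proper maps are sound (the paper cuts along circles $\{|z|=r_n^{1\pm\delta_n}\}$ where you use $\gamma_{n,1}$, an immaterial difference), and in part (c) you correctly identify the key mechanism: a pocket maps properly onto the infinitely connected $W_m^-$ and is therefore itself infinitely connected.

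The genuine gap is in part (b). You assert that when $c(U_N^-)<\infty$ ``one checks that for large $n$ no zero of $f$ lies in an outer complementary component of $U_n$, so there are no pockets and $f:W_n^+\to W_{n+1}^+$ is itself proper'', and you use this to settle the stages $N\le n<M$. This is both misapplied (a statement valid only for large $n$ cannot settle the initial stages) and false as stated: by Theorem~\ref{h-measure}(b) the function $\tilde h_n$ tends to $a<1$ at \emph{every} bounded boundary component of $U_n$, including the outer ones, so a neighbourhood in $U_n$ of any extra outer complementary component eventually lands in $U_m^-$; pockets therefore occur at stage $n$ precisely when $c(U_n^+)>2$, and $c(U_N^-)<\infty$ is perfectly compatible with $c(U_n^+)=\infty$ for every $n$. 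Your claim is thus essentially equivalent to the conclusion being proved. What part (b) actually requires --- and what the paper does --- is to run Riemann--Hurwitz on the pockets themselves: each pocket maps properly onto $U_m^-$, which for large $m$ is doubly connected and critical-point free, so each pocket has finite connectivity; hence the discrepancy between $c(U_n^+)$ and $c(\widehat W_n^+)$ is finite per pocket, infinite outer connectivity forces $c(\widehat W_n^+)=\infty$ and so propagates forward, and the critical-point counts follow by adding the Riemann--Hurwitz relations for $\widehat W_n^+$ and for the pockets. A smaller, related slip occurs in part (c): your displayed relation $c(U_n^+)-2=\widehat d_n^+(c(U_{n+1}^+)-2)+\widehat N_n^+$ should have $c(\widehat W_n^+)$ on the left (conflating the two is exactly what the pockets forbid), and the crucial claim that $c(U_n^+)$ never takes a finite value greater than $2$ is only gestured at; the clean statement, which the paper proves, is that $c(U_n^+)>2$ already forces $c(U_n^+)\ge c(U_m^+)+c(U_m^-)-2=\infty$, since the pocket generated by a single extra outer complementary component has connectivity at least $c(U_m^-)$.
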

\begin{proof}
We begin by proving part~(a). We claim that, if $n$ is sufficiently large, then, for each $m \geq n$, there exists a unique set $U_{n,m}^- \subset U_n\cap \{z: |z| < r_n^{1 + 2\delta_n}\}$ such that $f^{m-n}$ is a proper map of $U_{n,m}^-$ onto $U_m^{-}$ and, for $n \leq p \leq m$,
\begin{equation}\label{Umn}
f^{p-n}(U_{n,m}^-) \subset U_p^- \cup C_p.
\end{equation}
This follows since, for large $n$ and $p \geq n$, we have
\begin{align}
f^{p-n}(\{z:|z|=r_n^{1+2\delta_n}\})&\subset \{z:|z|\ge
M(r_n^{1+2\delta_n},f^{p-n})^{(1-\delta_n)}\}\notag\\
&\subset \{z: |z| \geq M(r_n,f^{p-n})\}\notag\\
&\subset \{z:|z|\ge r_p\},
\end{align}
by Theorem~\ref{main2b} part~(b), Theorem~\ref{convex} and Theorem~\ref{cover} part~(a). Hence there exists a component $U_{n,m}^-$ of $f^{-(m-n)}(U_m^{-})$ in $U_n \cap \{z: |z| < r_n^{1 + 2\delta_n}\}$ and $f^{m-n}$ is a proper map of $U_{n,m}^-$ onto $U_m^{-}$. If $n$ is sufficiently large, then $U_{n,m}^-$ also satisfies~\eqref{Umn} since, for $n \leq p \leq m$,
\[
 M(r_n^{1+2\delta_n},f^{p-n}) < r_p^{b_p(1-3\pi\delta_p)},
\]
by Lemma~\ref{delta} part~(d).

The results of part~(a) that are analogous to Theorem B now follow from the Riemann-Hurwitz formula applied to the map $f^{m-n}:U_{n,m}^- \to U_m^{-}$; that is,
\begin{equation}\label{RH}
c(U_{n,m}^-)-2 = d_{n,m} (c(U_m^-)-2)+N_{n,m},
\end{equation}
where $d_{n,m}$ is the degree of the map and $N_{n,m}$ is the number of critical points of $f^{m-n}$ in $U_{n,m}^-$. We note that the sets $U_n^-$ and $U_{n,m}^-$ have equal connectivity and also contain equal numbers of critical points of $f$, by Theorem~\ref{nocrits}, and $d_{n,m}>1$ for large $n$ and $m>n$, by Picard's theorem. We deduce from \eqref{RH}, by considering the case when $m-n=1$, that the sequence $(c(U_m^-))$ is non-increasing. Also, if $c(U_n^-)<\infty$ for some $n$, then for sufficiently large $m$ we have $c(U_m^-)=2$ and so~$f$ has no critical points in $U_m^-$, for such $m$, by \eqref{RH} again, as required.
%
% as described in the proof of Theorem~\ref{main3}, but this time applied to the sets $U_{n,m}^-$ and $U_m^{-}$ using~\eqref{Umn} together with Theorem~\ref{nocrits} and noting that $c(U_{n,m}^-)  = c(U_n^-)$.
%
%For $m \geq n$, let $d_m$ be the degree of the proper map $f:U_m\to U_{m+1}$
%and denote by $N_m$ the number of critical points in $U_m$.
%It follows from Picard's theorem that $d_m\to\infty$ as $m\to\infty$.
%Suppose that $c(U_m)\neq\infty$.
%By the Riemann-Hurwitz formula we have
%$$
%c(U_m)-2 = d_m (c(U_{m+1})-2)+N_m.
%$$
%Since $c(U_m)\geq 2$, this implies that $c(U_{m+1})\leq c(U_m)$, with $c(U_{m+1})<c(U_m)$ if  $N_m\neq 0$
%or if $d_m\neq 1$ and $c(U_{m+1})>2$. We deduce that $c(U_m)=2$ for large~$m$
%and that there are only finitely many $m$ for which $N_m\neq 0$. Thus $\bigcup_{m=n}^\infty U_m$ contains
%only finitely many critical points.

To complete the proof of part~(a), we suppose that $\bigcup_{m=n}^{\infty}U_m^-$ contains no critical points of $f$. We want to show that $U_n^-$ is doubly connected. If this is not the case, then there exists a
compact, connected subset $C$ of $U_n^- \cap \{z: |z| \leq r_n^{a_n}\}$ such that
no doubly connected domain $A$ satisfies $C\subset A\subset U_n^- \cup B_n$.
By Theorem~\ref{main2a} we have
$f^{m-n}(C)\subset C_{m}$ for large~$m$. Also, by Theorem~\ref{cover} part~(a) and Theorem~\ref{convex}, if $n$ is sufficiently large, then $f^{m-n}(C) \subset U_m^{-}$ for large~$m$.  Let~$A$ be the component of $f^{-(m-n)}(B_{m} \cap U_m^{-})$
that contains~$C$. Clearly $A \subset U_{n,m}^-$ and so, by~\eqref{Umn} and Theorem~\ref{nocrits}, $A$ contains no critical
points of $f^m$. It then follows from the Riemann-Hurwitz formula that $A$ is doubly connected, which is a contradiction. Thus we have proved that
$U_n^-$ is doubly connected. This completes the proof of part~(a).

We now prove parts~(b) and~(c). We argue as in part~(a) but consider the set $U_m^{+}$ instead of $U_m^{-}$. It follows from Theorem~\ref{cover} part~(a) and Theorem~\ref{convex} that, for large $n$ and $m\geq n$, there is a unique pre-image component $U_{n,m}^+ \subset U_n \cap \{z:|z| \geq r_n^{1-\delta_n}\}$ such that $f^{m-n}$ is a proper map of $U_{n,m}^+$ onto $U_m^{+}$ and, for $n \leq p \leq m$,
\begin{equation}\label{Umnp}
f^{p-n}(U_{n,m}^+) \subset U_p^+ \cup C_p.
\end{equation}

We can apply the Riemann-Hurwitz formula to the sets $U_{n,m}^+$ and $U_m^{+}$ but need to make some adjustments to obtain the final results since the connectivity of $U_{n,m}^+$ may be less than the connectivity of $U_n^+$. The difference occurs when $c(U_{n}^+)>2$, in which case there exists $m>n$ such that, under $f^{m-n}$, at least two of the bounded complementary components of $U_{n,m}^{+}$ map onto $\{z: |z| < r_m\}$; this is the case because the backwards orbit of~$0$ is dense in $J(f)$. Let $H_{n,m}$ denote a component of the interior of one such complementary component, with the property that $H_{n,m} \subset \{z: |z| > r_n\}$. Then
\[
c(U_n^+) \geq c(U_{n,m}^+) + c(H_{n,m} \cap U_n) - 2.
\]
Applying the Riemann-Hurwitz formula to each of the proper maps
\[
f^{m-n}: U_{n,m}^+ \to U_m^+ \mbox{ and } f^{m-n} : H_{n,m} \cap U_n \to U_m^-,
\]
 we see that
\[
c(U_{n,m}^+) \geq c(U_m^+)
\]
 and
 \[
 c(H_{n,m} \cap U_n) \geq c(U_m^-).
 \]
 Putting these inequalities together, gives
 \[
 c(U_n^+) \geq c(U_m^{+}) + c(U_m^-)-2.
  \]
  Thus, if $c(U_N^-)$ and hence $c(U_m^-)$ is infinite, then $c(U_n^+)$ is also infinite. The remaining arguments go through as before.
\end{proof}

We conclude this section by using Theorem~\ref{h-measure} to prove Theorem~\ref{main5}.

\begin{proof}[Proof of Theorem~\ref{main5}]
%Suppose for a contradiction that $b=\infty$.
Since $U$ has eventual outer connectivity~$2$, we can assume that $U$ itself has outer connectivity~$2$. In this situation the outer boundary component of $U$ is isolated from the rest of $\partial U$, so the harmonic measure of the outer boundary component of $U$ with respect to $U$ is positive, by \cite[Theorem~4.3.4(b)]{tR95}, for example. Thus $b<\infty$, by Theorem~\ref{h-measure}.
\end{proof}

\section{Proof of Theorem~\ref{stable}}
In this section we use our results to prove Theorem~\ref{stable}, which states that the property of having a multiply connected wandering domain is stable under relatively small changes to the function.

Let $f,g$ be transcendental entire functions and suppose that $f$ has a multiply connected {\wand} $U$ and that there exists $\alpha \in (0,1)$ such that
\begin{equation}\label{fg}
 M(r,g-f) \leq  M(r,f)^{\alpha}, \;\;\mbox{for large } r.
\end{equation}
We show that $g$ also has a multiply connected {\wand}.

First, let $z_0 \in U$ and, for large $n \in \N$, let $r_n$, $a_n$, $b_n$ and $B_n$ be defined as in~\eqref{Bndef}, and let~$b$ be defined as in Theorem~\ref{main2c}. If $b = \infty$, then we take $b'$ to be some value in $(1, \infty)$ and if $b < \infty$, then we let $b' = b$. For $n>N$, we define
\[
\eps_n = \frac{8\pi}{(b'-1)\log r_n},\;c_n = \left(1 + \frac{b'-1}{2}\right) \prod_{m=N}^{n-1}(1 - 2 \eps_m) \mbox{ and }  d_n =  \beta b_n \prod_{m=N}^{n-1}(1 + 2K \eps_m).
\]
Here $\beta \in(0,1)$ is chosen below and $K>1$ is the constant from Theorem~\ref{cover}. We claim that, if $N$ is sufficiently large, then, for $n \geq N$,
\begin{align}\label{gA}
g(A(r_n^{1 + (b'-1)/2},r_n^{1 + 5(b'-1)/8})) &\subset g(A(r_n^{c_n},r_n^{d_n}))\\
&\subset A(r_{n+1}^{c_{n+1}},r_{n+1}^{d_{n+1}}) \subset A(r_n,r_n^{b_n}).\notag
\end{align}
%We will show that $\beta, N$ can be chosen so as to ensure that, for $n \geq N$,
%   \begin{equation}\label{dn1}
%   d_n > 1 = 5(b'-1)/8.
%   \end{equation}
%Since $c_n < 1 + (b'-1)/2$, for $n \in \N$,
It follows from~\eqref{gA} and Theorem~\ref{main2c} part (b)(i) that, if $N$ is sufficiently large, then for $n \geq N$ and $m \in \N$,
\[
g^m(A(r_n^{1+(b'-1)/2},r_n^{
1+5(b'-1)/8})) \subset A(r_{n+m},r_{n+m}^b)
 \]
 and so, by Montel's theorem, $A(r_n^{1 +(b'-1)/2},r_n^{1+5(b'-1)/8}) \subset F(g)$, for $n \geq N$. Thus~$g$ has a multiply connected wandering domain.

We now show that~\eqref{gA} does indeed hold.  We begin by noting that, since $z_0$ belongs to a multiply connected {\wand} and $r_n = |f^n(z_0)|$, it follows from~\cite[Theorem 1]{RS00} that
 \[
 \frac{1}{n} \log \log r_n \to \infty \;\mbox{ as } n \to \infty.
 \]
 Thus, by the definition of $\eps_n$,
 \[
 \lim_{N \to \infty} \prod_{m=N}^{\infty} (1 - 2 \eps_m)
 =  \lim_{N \to \infty} \prod_{m=N}^{\infty} (1 + 2K \eps_m) = 1.
 \]

Together with Theorem~\ref{main2c} part (a), this implies that, by choosing $N$ sufficiently large and $\beta$ appropriately and considering the cases that $b= \infty$ and $b < \infty$ separately, we can ensure that, for $n \geq N$,
\begin{equation}\label{cond1}
a_n + (b'-1)/4 < 1 + (b'-1)/4 < c_n < 1 + (b'-1)/2,
\end{equation}
\begin{equation}\label{cond3}
1 + 5(b'-1)/8 < d_n < b_n - (b'-1)/4 < b_n
 \end{equation}
 and
 \begin{equation}\label{cond2}
 (b'-1)/4 \in (\pi/\log r_n, (b_n - a_n)/2).
\end{equation}

If $z \in A(r_n^{c_n}, r_n^{d_n})$ for some $n \geq N$ with $N$ sufficiently large, then it follows from~\eqref{cond1},~\eqref{cond3} and~\eqref{cond2} that we can apply Theorem~\ref{main2b} part (a) with $\rho = |z|$ and $\eps = (b'-1)/4$. Together with~\eqref{fg}, Theorem~\ref{convex} and Theorem~\ref{cover} part (a), this implies that
\begin{eqnarray*}
|g(z)| & \geq & m(|z|,f) - M(|z|,g-f)\\
& \geq & M(|z|,f)^{1-\eps_n} - M(|z|,f)^{\alpha}> M(|z|,f)^{1-2\eps_n}\\
&\geq & M(r_n^{c_n},f)^{1-2\eps_n}\geq M(r_n,f)^{c_n(1-2\eps_n)}\\
&\geq&  r_{n+1}^{c_n(1-2\eps_n)} = r_{n+1}^{c_{n+1}}.
\end{eqnarray*}

Next, by Theorem~\ref{cover} part (b), for sufficiently large $n$,
\begin{equation}\label{rb}
r_{n+1}^{b_{n+1}} \geq M(r_n^{b_n}/K,f) > M(r_n^{b_n(1-K\eps_n)},f).
\end{equation}
If $z \in A(r_n^{c_n}, r_n^{d_n})$ for some $n \geq N$ with $N$ sufficiently large, then it follows from~\eqref{fg}, Theorem~\ref{convex} and~\eqref{rb} that
\begin{eqnarray*}
|g(z)| & \leq & M(r_n^{d_n},f) + M(r_n^{d_n},f)^{\alpha}\\
& < & M(r_n^{d_n},f)^{1+\eps_n}\leq M(r_n^{b_n(1 - K\eps_n)},f)^{d_n(1+\eps_n)/(b_n(1-K\eps_n))}\\
& < & r_{n+1}^{b_{n+1}d_n(1+\eps_n)/(b_n(1-K\eps_n))}\\
& = & r_{n+1}^{b_{n+1}\beta(1+\eps_n)/(1-K\eps_n)\prod_{m=N}^{n-1}(1+2K\eps_m)}\\
& < & r_{n+1}^{d_{n+1}}.
\end{eqnarray*}

We have now shown that~\eqref{gA} holds and this completes the proof.

\section{Examples}\label{examples}
\setcounter{equation}{0}
In this section we construct a number of new examples of transcendental entire functions with multiply connected wandering domains. These have properties that differ significantly from earlier examples of such functions. As mentioned in the introduction, all earlier examples have the property that the limits~$a$ and~$b$ defined in Theorem~\ref{main2c} satisfy $b/a = \infty$. We now show that, for any $c > 1$, there exists a {\tef} with a multiply connected {\wand} for which $b/a \leq c$.
\begin{example}\label{exa1}
Let $c \in (1, \infty)$ and consider the transcendental entire
function defined by
\begin{equation}\label{prod}
 f(z) = z \prod_{i=1}^{\infty} \left( 1 - \frac{z}{s_i^c}  \right) \left( 1 - \frac{z}{s_{i^2}}  \right),
\end{equation}
where $s_1>1$ and $s_{n+1} = s_n^{n + [\sqrt{n}\,]}$. Then $f$ has a multiply connected {\wand} $U$ such that the limits $a$ and $b$ defined in Theorem~\ref{main2c} satisfy $b/a \leq c$. Hence, for large $n \in \N$, $f^n(U)$ does not contain an annulus of the form $A(t,t^C)$, where $t>0$ and $C>c$.
\end{example}
\begin{proof} We will show that there exist sequences $(\alpha_n)$, $(\beta_n)$ with $1\leq \alpha_n \leq \beta_n \leq c$, for $n \in \N$, such that, for sufficiently large $n$,
\begin{equation}\label{image}
f(A_n) \subset A_{n+1}, \; \mbox{ where } A_n = A(s_n^{\alpha_n},
s_n^{\beta_n}).
\end{equation}
This is sufficient to show that there exists $N \in \N$ and a multiply connected {\wand} $U$ such that $A_N \subset U$ and $f^n(A_N) \subset A_{n+N}$, for $n \in \N$.

We then choose a sequence $(n_k)$ such that $n_k + N = k^2$ for $k$ sufficiently large. For large $k$, the points $s_{n_k+N}$ and $s_{n_k + N}^c$ are zeros of $f$ and so do not belong to $f^{n_k}(U)$.  Taking $B_n = A(r_n^{a_n},r_n^{b_n})$ to be the annulus defined in~\eqref{Bndef}, it follows from~\eqref{image} and Theorem~\ref{main2a} that, for large $k$, we have $B_{n_k} \subset A(s_{n_k},s_{n_k + N}^c)$ and hence $b/a = \lim_{n \to \infty} b_n/a_n \leq c$. It follows from Theorem~\ref{main2c} part (b) that, for large $n$, $B_n$ is the largest annulus contained in $f^n(U)$ and so $f^{n}(U)$ does not contain an annulus of the form $A(t,t^C)$, where $t>0$ and $C>c$.

In order to show that~\eqref{image} holds, we prove the following result.
\begin{lemma}\label{ex}
Let $f$ be the {\tef} defined in~\eqref{prod}. Then there exists
$N_1 \in \N$ such that, if $n \geq N_1$ and $c^{1/8} \leq \alpha \leq
\beta \leq c^{7/8}$, then
\begin{equation}\label{notsq}
 f\left( A \left( s_n^\alpha, s_n^\beta\right) \right) \subset A \left( s_{n+1}^{\alpha(1-2c/n^2)},s_{n+1}^{\beta(1+2c/n^2)} \right),
\end{equation}
for $m^2 < n < (m+1)^2$, $m \in \N$, and
\begin{equation}\label{sq}
  f\left( A \left( s_n^\alpha, s_n^\beta\right) \right) \subset A \left( s_{n+1}^{\alpha(1-2/n)},s_{n+1}^{\beta(1+2/n)} \right),
\end{equation}
for $n = m^2$,  $m \in \N$.
\end{lemma}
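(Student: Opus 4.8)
The zeros of $f$ are $0$ together with the points $s_i^c$ and $s_{i^2}$, $i\ge 1$. The plan is first to pin down, for large $n$ and for $\rho=s_n^t$ with $t\in[c^{1/8},c^{7/8}]$, which zeros lie in $\{|z|<\rho\}$. Since $(s_n)$ grows super-exponentially (indeed $\log s_n=\log s_1\cdot\prod_{k=1}^{n-1}(k+[\sqrt k])$), and since $1<c^{1/8}\le t\le c^{7/8}<c$, one checks that the circle $|z|=\rho$ separates $\{s_i^c:i\le n-1\}$ from $\{s_i^c:i\ge n\}$ and separates $\{s_{i^2}:i^2\le n\}$ from $\{s_{i^2}:i^2>n\}$; in particular $f$ is zero-free on $\overline A(s_n^{c^{1/8}},s_n^{c^{7/8}})$, and the number of zeros inside $|z|=\rho$, with multiplicity, is $1+(n-1)+[\sqrt n]=n+[\sqrt n]$, which is precisely $\log s_{n+1}/\log s_n$.

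The core estimate is that for $n\ge N_1$ and every $z$ with $s_n^{c^{1/8}}\le|z|=s_n^t\le s_n^{c^{7/8}}$,
\[
\bigl|\log|f(z)|-t\log s_{n+1}+E_n\bigr|\le\eps_n,\qquad E_n:=c\sum_{i=1}^{n-1}\log s_i+\sum_{i=1}^{[\sqrt n]}\log s_{i^2},
\]
where $\eps_n\to0$. To prove this I would expand $\log|f(z)|=\log|z|+\sum_i\log|1-z/s_i^c|+\sum_i\log|1-z/s_{i^2}|$; for a zero $a$ with $|a|<\rho$ write $\log|1-z/a|=(\log\rho-\log|a|)+\log|1-a/z|$, and for $|a|>\rho$ keep $\log|1-z/a|$ as is. The two types of correction term are bounded by $2|a|/\rho$ and $2\rho/|a|$ respectively, and the super-exponential growth of $(s_n)$ makes the largest small zero have modulus $\le s_n\le s_n^{1-\eta}\rho$ and the smallest large zero have modulus $s_n^{c}\ge s_n^{\eta}\rho$ with $\eta=\eta(c)>0$; hence all corrections sum to $O(ns_n^{-\eta})=:\eps_n\to0$, uniformly in $z$. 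Collecting the $\log\rho$-terms yields $(n+[\sqrt n])\log\rho=t\log s_{n+1}$ and the $-\log|a|$-terms yield $-E_n$, giving the displayed identity. Note that $E_n$ does not depend on $t$, because for all $t\in[c^{1/8},c^{7/8}]$ the same set of zeros lies inside $|z|=\rho$.

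Next I would estimate $E_n$ using $\log s_{i+1}=(i+[\sqrt i])\log s_i$, so that $\sum_{i=1}^{n-1}\log s_i=(1+O(1/n))\log s_{n-1}=(1+O(1/n))\log s_n/(n-1+[\sqrt{n-1}])$ and $\sum_{i=1}^{[\sqrt n]}\log s_{i^2}=(1+o(1))\log s_{[\sqrt n]^2}$. There are two regimes. If $n=m^2$ then $\log s_{[\sqrt n]^2}=\log s_n$, so $E_n=(1+o(1))\log s_n$. If $m^2<n<(m+1)^2$ then $\log s_{[\sqrt n]^2}=\log s_{m^2}\le\log s_{n-1}$, with equality exactly when $n=m^2+1$; hence $E_n\le(c+1+o(1))\log s_{n-1}$ when $n=m^2+1$, and $E_n\le(c+o(1))\log s_{n-1}$ when $n\ge m^2+2$.

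Finally, for $c^{1/8}\le\alpha\le\beta\le c^{7/8}$ and $z\in A(s_n^\alpha,s_n^\beta)$, the core estimate (with $t=\log|z|/\log s_n\in(\alpha,\beta)$) gives $\log|f(z)|\le\beta\log s_{n+1}+\eps_n$ and $\log|f(z)|\ge\alpha\log s_{n+1}-E_n-\eps_n$. Since $\log s_{n+1}$ dwarfs any power of $n$, the upper bound is at most $\beta(1+\mathrm{err}_n)\log s_{n+1}$ automatically, where $\mathrm{err}_n=2c/n^2$ in the non-square case and $2/n$ in the square case; so the only real work is the lower bound, i.e.\ checking $E_n+\eps_n\le\mathrm{err}_n\cdot\alpha\log s_{n+1}$. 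In the square case $n=m^2$ the right side equals $\tfrac{2\alpha(n+[\sqrt n])}{n}\log s_n\ge 2c^{1/8}\log s_n$, against $E_n+\eps_n=(1+o(1))\log s_n$, so there is ample room. In the non-square case the binding instance is $n=m^2+1$: there $\mathrm{err}_n\cdot\alpha\log s_{n+1}=\tfrac{2c\alpha(m^2+m+1)}{(m^2+1)^2}\log s_n$ while $E_n+\eps_n\le(c+1+o(1))\tfrac{\log s_n}{m^2+m}$, and after clearing denominators and dividing by $m^4$ the required inequality becomes, in the limit $m\to\infty$, simply $c+1\le 2c\alpha$; since $\alpha\ge c^{1/8}$ this follows from $2c^{9/8}>c+1$, which holds for every $c>1$ because $2c^{9/8}-c-1$ vanishes at $c=1$ and has derivative $\tfrac94-1>0$ there. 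The remaining non-square instances $n\ge m^2+2$ reduce to $c\le 2c\alpha$, which is trivial. Choosing $N_1$ large enough that all the $o(1)$ corrections stay within the fixed positive margin $2c^{9/8}-c-1$ (and that the zero-location bookkeeping of the first step is valid) then finishes the proof. I expect the main obstacle to be exactly this last point: the lower bound in the non-square case with $n=m^2+1$ and $c$ near $1$ is genuinely tight, and the whole construction — the exponents $c^{1/8}$, $c^{7/8}$ and the growth rate $n+[\sqrt n]$ — appears engineered so that $2c^{9/8}>c+1$ supplies precisely the sliver of room needed.
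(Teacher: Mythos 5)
Your proof is correct and follows essentially the same route as the paper's: split the zeros of $f$ into those inside and outside the circle $|z|=s_n^{t}$, use $\log s_{n+1}=(n+[\sqrt{n}\,])\log s_n$ to identify the main term $t\log s_{n+1}$, and control the contribution of the moduli of the interior zeros (your $E_n$), with $n=m^2$ treated separately because the zero $s_{m^2}=s_n$ then enters the count. Your identification of the binding case $n=m^2+1$ and of the inequality $2c^{9/8}>c+1$ is a correct, explicit reading of the margin that the paper's estimate $s_{n-1}^{(c+1)(1+2/n)}\le|z|^{2c/n}$ uses implicitly.
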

\begin{proof}
First suppose that $m^2 < n < (m+1)^2$, for some $m \in \N$, and
that $|z| = s_n^k$, where $c^{1/8} \leq k \leq c^{7/8}$. Then
\begin{equation}\label{size}
 |f(z)| = |z|\left| \prod_{i=n}^{\infty}\left( 1 - \frac{z}{s_i^c}\right) \prod_{i=m+1}^{\infty}\left( 1 - \frac{z}{s_{i^2}}\right)\right|
 \left| \prod_{i=1}^{n-1}\frac{z}{s_i^c}\left(1 - \frac{s_i^c}{z} \right) \prod_{i=1}^{m}\frac{z}{s_{i^2}}\left(1 - \frac{s_{i^2}}{z} \right) \right|.
\end{equation}
Since $s_{n+1} \geq s_n^{n+1}$, it follows from~\eqref{size}
that, if $n$ is sufficiently large, then
\begin{eqnarray*}
|f(z)| & \geq & \frac{1}{2} |z| |z^{n-1}| |z^m|
\prod_{i=1}^{n-1}\frac{1}{s_i^c}
\prod_{i=1}^m \frac{1}{s_{i^2}}\\
& \geq & \frac{1}{2} |z|^{n+m}\frac{1}{s_{n-1}^{c(1+2/n)}} \frac{1}{s_{n-1}^{1+2/n}}\\
& \geq & \frac{1}{2} |z|^{n+m-2c/n}\geq  |z|^{(n+m)(1-2c/n^2)}\\
& = &  s_n^{k(n+m)(1-2c/n^2)} = s_{n+1}^{k(1-2c/n^2)}.
\end{eqnarray*}
Similarly, $|f(z)| \leq s_{n+1}^{k(1+2c/n^2)}$, if $n$ is
sufficiently large.

Now suppose that $n = m^2$, for some $m \in \N$, and that $|z| =
s_n^k$, where $c^{1/8} \leq k \leq c^{7/8}$. Since $s_{n+1} \geq
s_n^{n+1}$, it follows from~\eqref{size} that, if $n$ is
sufficiently large, then
\begin{eqnarray*}
|f(z)| & \geq & \frac{1}{2} |z| |z^{n-1}| |z|^{m}
\prod_{i=1}^{n-1}\frac{1}{s_i^c}
\prod_{i=1}^m \frac{1}{s_{i^2}}\\
& \geq & \frac{1}{2} |z|^{n+m}\frac{1}{s_{n-1}^{c(1+2/n)}} \frac{1}{s_{n}^{1+2/n}}\\
& \geq & \frac{1}{2} |z|^{n+m-2}\geq  |z|^{(n+m)(1-2/n)}\\
& = &  s_n^{k(n+m)(1-2/n)}=  s_{n+1}^{k(1-2/n)}.
\end{eqnarray*}
Similarly, $|f(z)| \leq s_{n+1}^{k(1+2/n)}$, if $n$ is
sufficiently large. This completes the proof.
\end{proof}

We now take $N_1 \in \N$ to satisfy the conditions of
Lemma~\ref{ex} and take $N_2 \geq N_1$ such that
\[
 \prod_{n=N_2}^{\infty}(1 - 2c/n^2) > 1/c^{1/16} \mbox{ and } \prod_{n=N_2}^{\infty}(1+2c/n^2) < c^{1/16}.
\]

Then, if we take
\[
 \alpha_n = c^{1/4}\prod_{i=N_2}^{n-1}(1 - 2c/i^2)\prod_{i=N_2}^{[\sqrt{n}\,]}(1 - 2/i^2)
\]
and
\[
\beta_n = c^{3/4}\prod_{i=N_2}^{n-1}(1 +
2c/i^2)\prod_{i=N_2}^{[\sqrt{n}\,]}(1 + 2/i^2),
\]
we have, for $n \geq N_2^2$,
\[
 A_n = A(s_n^{\alpha_n}, s_n^{\beta_n}) \subset A(s_n^{c^{1/8}}, s_n^{c^{7/8}})
\]
and so, by Lemma~\ref{ex},
\[
 f(A_n) \subset A_{n+1}.
\]
Thus~\eqref{image} holds. This completes the proof of Example~\ref{exa1}.
\end{proof}

We have already noted that, if a {\tef} $f$ has a multiply connected {\wand} $U$ and $B_n = A(r_n^{a_n},r_n^{b_n}) \subset f^n(U)$ is as defined in~\eqref{Bndef}, then for all the examples in earlier papers, $b/a = \lim_{n \to \infty} b_n/a_n = \infty$. In fact these examples also appear to have the property that $a = 0$ and $b < \infty$.

We now give an example of a multiply connected {\wand} $U$ for which $b = \infty$. Moreover, in contrast to the previous example, this example has the property that, for some large values of $n \in \N$, the {\wand} $U_n$ contains a very large annulus that is disjoint from the main annulus $B_n$ defined in~\eqref{Bndef}.

\begin{example}\label{exa2}
Consider the transcendental entire function defined by
\begin{equation}\label{prod2}
 f(z) = z^2 \prod_{n=1}^{\infty}  \prod_{m=(n!)^2}^{((n+1)!)^2-1} \left( 1 - \frac{z}{s_m^{n!}}  \right),
\end{equation}
where $s_1 > 1$ and $s_{m+1} = s_m^{m+1}$, for $m \in \N$. Then $f$ has a multiply connected {\wand} $U$ such that, for large $n \in \N$, $U_{(n!)^2-1}$ contains an annulus of the form $A(R,R^{n-1})$ that is disjoint from the annulus $B_{(n!)^2-1}$ defined in~\eqref{Bndef}, with $R > r_{(n!)^2-1}^{b_{(n!)^2-1}}$. Moreover, the limit $b$ defined in Theorem~\ref{main2c} is infinite.
\end{example}

We will show that, for sufficiently large $n$, there exist
sequences $(a_{m,n})$, $(b_{m,n})$ with $1 \leq a_{m,n} \leq 2$
and $b_{m,n} = n!-1$, for $m \geq (n!)^2$, such that
\begin{equation}\label{image2}
f(A_{m,n}) \subset A_{m+1,n}, \; \mbox{ where } A_{m,n} =
A(s_m^{a_{m,n}}, s_m^{b_{m,n}}),\;\;\mbox{for } m \geq (n!)^2.
\end{equation}
This is sufficient to show that there exists a multiply connected {\wand} $U$ such that, for sufficiently large $n$ and $m \geq (n!)^2$,  $A_{m,n} \subset U_m = f^m(U)$.

 We also show that, if $n$ is sufficiently large, and
 \[
 A_n = A(s_{(n!)^2-1}^3,s_{(n!)^2-1}^{(n!)^2-3})
 \]
  then
  \begin{equation}\label{union}
f\left(  A_n \cap \{z: \left| 1 -
z/s_{(n!)^2-1}^{(n-1)!}\right| > 1/2 \} \right) \subset
A_{(n!)^2,n} \subset U_{(n!)^2}.
  \end{equation}
  Thus
  \[
  A_n' = A(s_{(n!)^2-1}^{((n-1)!)^2+2}, s_{(n!)^2-1}^{(n!)^2-3})
  \subset U_{(n!)^2-1}.
  \]
  Note that
  $\frac{(n!)^2 - 3}{((n-1)!)^2+2}\geq n-1$ if $n$ is sufficiently large. Also $A_n'$ and
  $A_{n!-1,n-1}$ are disjoint annuli in $U_{(n!)^2 - 1}$ with a zero at $s_{(n!)^2-1}^{(n-1)!}$ in between them. Thus $U_{(n!)^2 - 1}$ contains an annulus of the form $A(R,R^{n-1})$ that does not meet the largest annulus in $U_{(n!)^2 - 1}$ that contains $A_{n!-1,n-1}$. Moreover, it follows from~\eqref{image2} that, for large $n$ and $m \geq (n!)^2$, the annulus $B_m = A(r_m^{a_m},r_m^{b_m})$ defined in~\eqref{Bndef} must contain the annulus $A_{m,n}$. Thus, for large $n$,
  \[
   B_{(n!)^2-1} \supset A_{(n!)^2-1,n-1} \supset A\left(s_{(n!)^2-1}^2,s_{(n!)^2-1}^{(n!)^2-1}\right).
  \]
  Since there is a zero at $s_{(n!)^2-1}^{(n-1)!}$, we must have $r_{(n!)^2-1}^{b_{(n!)^2-1}} < s_{(n!)^2-1}^{(n-1)!}$ and so
  \[
  r_{(n!)^2}^{b_{(n!)^2-1}} \leq M(r_{(n!)^2-1}^{b_{(n!)^2-1}},f) < M(s_{(n!)^2-1}^{(n-1)!},f).
  \]
  We will show (in Lemma~\ref{ex3}) that
  \begin{equation}\label{bound}
  M(s_{(n!)^2-1}^{(n-1)!},f) < s_{(n!)^2}^{(n-1)!+2}
  \end{equation}
  and so
  \begin{equation}\label{upper}
  r_{(n!)^2}^{b_{(n!)^2-1}} < s_{(n!)^2}^{(n-1)!+2}.
  \end{equation}
   Also, for large $n$,
  \[
  B_{(n!)^2} \supset A_{(n!)^2,n} \supset A(s_{(n!)^2}^2,s_{(n!)^2}^{n!-1})
  \]
  and so
  \[
  r_{(n!)^2}^{b_{(n!)^2}} \geq s_{(n!)^2}^{n!-1}.
  \]
  Together with~\eqref{upper} this implies that
  \[
  b_{(n!)^2} / b_{(n!)^2-1} > (n!-1)/((n-1)!+2) \to \infty\; \mbox{ as } n \to \infty
  \]
  and so $b = \lim_{n \to \infty} b_n = \infty$.

In order to show that~\eqref{image2} holds, we prove the
following result.

\begin{lemma}\label{ex2}
Let $f$ be the {\tef} defined in~\eqref{prod2}. Then there exists
$N_1 \in \N$ such that, if $n \geq N_1$, $1 \leq a <  b \leq
n!-1$, then
\begin{equation}\label{all}
 f\left( A \left( s_m^a, s_m^b\right) \right) \subset A \left( s_{m+1}^{a - 2/m^{3/2}},s_{m+1}^{b} \right),
\end{equation}
for $m \geq (n!)^2$.
\end{lemma}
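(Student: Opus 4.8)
The plan is to follow the pattern of Lemma~\ref{ex}: fix $n\ge N_1$ and $m\ge (n!)^2$, and estimate $|f(z)|$ on each circle $|z|=s_m^k$ with $1\le a\le k\le b\le n!-1$, showing that $s_{m+1}^{\,a-2/m^{3/2}}<|f(z)|<s_{m+1}^{\,b}$ once $N_1$ is large enough; this is exactly \eqref{all}. Throughout I would use the closed form $s_j=s_1^{j!}$, immediate from $s_{j+1}=s_j^{j+1}$, and the block structure of \eqref{prod2}: for each $j\ge 1$ there is a unique $p=p(j)$ with $(p!)^2\le j\le((p+1)!)^2-1$, and the $j$th factor of the product is $1-z/\zeta_j$ with $\zeta_j=s_j^{q_j}$, $q_j=p(j)!$. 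The facts I would extract are: $(q_j)$ is nondecreasing; $m\ge(n!)^2$ forces $p:=p(m)\ge n$, hence $q_j\le p!$ for all $j\le m$; and, crucially, $m\ge(p!)^2$ gives $p!\le\sqrt m$ and $p!<m$. This last inequality — exactly what squaring the block lengths is designed to provide — is the source of the exponent $2/m^{3/2}$.

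Next I would split $f(z)=z^2\prod_{j<m}(1-z/\zeta_j)\cdot\prod_{j\ge m}(1-z/\zeta_j)$ and check the two relevant size comparisons. For $j<m$ one has $\zeta_j\le s_{m-1}^{p!}$ while $|z|\ge s_m=s_{m-1}^m$, so (using $p!<m$) $\zeta_j/|z|\le s_{m-1}^{p!-m}\to 0$; for $j\ge m$ one has $\zeta_j\ge s_m^{p!}\ge s_m^{n!}$ while $|z|\le s_m^{n!-1}$, so $|z|/\zeta_j\le s_m^{-1}\to 0$. Each of the two tail sums is rapidly convergent and tends to $0$ uniformly in $k$, so
\[
E_m:=\prod_{j<m}\Bigl|1-\frac{\zeta_j}{z}\Bigr|\cdot\prod_{j\ge m}\Bigl|1-\frac{z}{\zeta_j}\Bigr|\in[\tfrac12,2]\qquad\text{for all large }m,
\]
and, factoring $1-z/\zeta_j=(-z/\zeta_j)(1-\zeta_j/z)$ in the first product,
\[
|f(z)|=E_m\,\frac{|z|^{m+1}}{\prod_{j<m}\zeta_j}.
\]

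For the upper bound it suffices to use $\prod_{j<m}\zeta_j\ge\zeta_{m-1}\ge s_{m-1}$, which gives $|f(z)|\le 2\,s_{m+1}^{\,k}/s_{m-1}<s_{m+1}^{\,k}\le s_{m+1}^{\,b}$ once $s_{m-1}>2$. For the lower bound I would write $\prod_{j<m}\zeta_j=s_1^{\sum_{j<m}j!q_j}$ and use $q_j\le p!$ together with the routine estimate $\sum_{j\le m-1}j!\le 2(m-1)!$ (valid for large $m$) to get $\prod_{j<m}\zeta_j\le s_1^{2p!(m-1)!}=s_{m-1}^{2p!}=s_m^{2p!/m}$, whence
\[
\frac{|z|^{m+1}}{\prod_{j<m}\zeta_j}\ \ge\ s_m^{\,k(m+1)-2p!/m}\ =\ s_{m+1}^{\,k-2p!/(m(m+1))}.
\]
Since $p!\le\sqrt m$, the exponent correction satisfies $2p!/(m(m+1))\le 2/(\sqrt m\,(m+1))<2/m^{3/2}$, with a gap of order $m^{-5/2}$; as $\log s_{m+1}=m!\log s_1$ grows far faster than $m^{5/2}$, the factor $s_{m+1}^{\,2/m^{3/2}-2p!/(m(m+1))}$ tends to infinity and hence absorbs the bounded factor $E_m\ge\tfrac12$. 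This yields $|f(z)|\ge s_{m+1}^{\,k-2/m^{3/2}}\ge s_{m+1}^{\,a-2/m^{3/2}}$ for all large $m$. Finally I would fix $N_1$ so large that $(N_1!)^2$ exceeds the (absolute) lower bound on $m$ needed for all of the above; then for every $n\ge N_1$ and $m\ge(n!)^2$ the estimates hold simultaneously, and letting $z$ run over $|z|=s_m^k$ with $a<k<b$ gives \eqref{all}.

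I expect the only real difficulty to lie in the lower bound: getting the estimate for $\sum_{j<m}j!q_j$ sharp enough to stay below $2/m^{3/2}$ after division by $(m+1)!$, and then verifying that the residual slack genuinely outweighs the bounded (but non-negligible) correction factor $E_m$. The block combinatorics, the convergence of the tail products, and the upper bound are all routine.
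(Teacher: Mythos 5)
Your proposal is correct and follows essentially the same route as the paper: split the product at the index $m$, rewrite the factors with $j<m$ as $(-z/\zeta_j)(1-\zeta_j/z)$, show the two correction products are bounded between absolute constants, and use $\sum_{j<m}j!\,q_j\le 2p!(m-1)!$ together with $p!\le\sqrt m$ to produce the exponent $2/m^{3/2}$. Your additive form of the error, $s_{m+1}^{\,k-2p!/(m(m+1))}$, is in fact marginally cleaner than the paper's multiplicative bound $s_{m+1}^{\,k(1-2/m^{3/2})}$, since it yields $\ge s_{m+1}^{\,a-2/m^{3/2}}$ directly for all $k\ge a$, but this is a cosmetic difference rather than a different method.
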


\begin{proof}
Suppose that $(N!)^2 \leq m < ((N+1)!)^2$, for some $N \in \N$,
and that $|z| = s_m^k$, where $1 \leq k \leq N!-1$. Then
\begin{equation}\label{size2}
 |f(z)| = |z|^2\left| \prod_{n=N+1}^{\infty} \prod_{i = (n!)^2}^{((n+1)!)^2-1} \left( 1 - \frac{z}{s_i^{n!}}\right)\right|
 \left| \prod_{n=1}^{N-1}\prod_{i = (n!)^2}^{((n+1)!)^2-1}\frac{z}{s_i^{n!}}\left(1 - \frac{s_i^{n!}}{z} \right) \right|
 \end{equation}
 \[
 \left|  \prod_{i=m}^{((N+1)!)^2-1} \left( 1 - \frac{z}{s_i^{N!}}\right)\right|
 \left| \prod_{i = (N!)^2}^{m-1}
\frac{z}{s_i^{N!}}\left(1 - \frac{s_i^{N!}}{z} \right) \right|.
\]

We also note that $s_m = s_1^{m!}$, so if $m \geq (n!)^2$, then
\begin{equation}\label{big}
s_m^{n!} = s_1^{m!n!} < s_1^{(m+1)!/2} = \sqrt{s_{m+1}}.
\end{equation}

It follows from~\eqref{size2} and~\eqref{big} that, if $N$ is
sufficiently large, then
\begin{eqnarray*}
|f(z)| & \geq & \frac{1}{2}  |z|^{m+1} \left(
\prod_{n=1}^{N-1}\prod_{i =
(n!)^2}^{((n+1)!)^2-1}\frac{1}{s_i^{n!}}\right) \prod_{i =
(N!)^2}^{m-1}
\frac{1}{s_i^{N!}}\\
& \geq & \frac{1}{2} |z|^{m+1}\frac{1}{s_{m-1}^{2N!}} \geq \frac{1}{2} |z|^{m+1 - 2N!/m}\\
& = & \frac{1}{2} s_m^{k(m+1 - 2N!/m)}= \frac{1}{2} s_m^{k(m+1)(1-2N!/m(m+1))}\\
& = & s_{m+1}^{k(1-2N!/m(m+1))}\geq s_{m+1}^{k(1-2/m^{3/2}))}.
\end{eqnarray*}
It also follows from~\eqref{size2} that $|f(z)| \leq |z^{m+1}| =
s_m^{k(m+1)} = s_{m+1}^k$, if $N$ is sufficiently large. This
completes the proof.
\end{proof}

It follows from Lemma~\ref{ex2} that~\eqref{image2} holds if we
take $N_2\geq N_1$ sufficiently large so that
$\sum_{i=N_2}^{\infty}2/i^{3/2} < 1$ and put, for $m \geq
(n!)^2$ and $n \geq N_2$,
\[
 a_{m,n} = 2 - \sum_{i=N_2}^{m-1}\frac{2}{i^{3/2}}
\]
and
\[
 b_{m,n} = n! - 1.
\]

We now show that~\eqref{union} and~\eqref{bound} hold if $n$ is sufficiently
large.

\begin{lemma}\label{ex3}
Let $f$ be the {\tef} defined in~\eqref{prod2}. Then there exists
$N_3 \in \N$ such that, if $N \geq N_3$ and $m = (N!)^2 - 1$
then, for $|z| = s_m^k$ with $3 \leq k \leq (N!)^2 - 3$ and
$\left|1- z/s_m^{(N-1)!}\right| > 1/2$, we have $f(z) \in
A_{m+1,N}$. Further,
\[
M(s_m^{(N-1)!},f) < s_{m+1}^{(N-1)!+2}.
\]
\end{lemma}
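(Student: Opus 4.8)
The plan is to run the factorisation argument from the proof of Lemma~\ref{ex2}, the one genuinely new feature being that the annulus $A_N=A(s_m^3,s_m^{(N!)^2-3})$ now surrounds the single zero $s_m^{(N-1)!}$ of $f$, which is exactly why the disc $\{z:|1-z/s_m^{(N-1)!}|\le\frac12\}$ has to be deleted. Fix $N\ge N_3$ and $m=(N!)^2-1$; recall from the proof of Lemma~\ref{ex2} that $s_j=s_1^{j!}$, so $s_m^{m+1}=s_{m+1}$, and write $f(z)=z^2\prod_{j\ge1}(1-z/w_j)$, where $w_j=s_j^{n(j)!}$ and $n(j)$ is the block index of $j$ in~\eqref{prod2}. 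First I would determine, for $|z|=s_m^k$ with $3\le k\le(N!)^2-3$, which zeros lie inside the circle $\{|z|=s_m^k\}$: using the super-exponential growth of the $s_j$ (as in~\eqref{big}) together with the bounds on $k$, one checks that $|w_j|\ll s_m^k$ for every $j<m$ and $|w_j|\gg s_m^k$ for every $j>m$ (here $k\le(N!)^2-3$ is used to keep the first zero $s_{m+1}^{N!}$ of block $N$ outside), while the remaining zero $w_m=s_m^{(N-1)!}$ is inside when $k>(N-1)!$ and outside when $k<(N-1)!$.

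Next I would factorise $f$ as in~\eqref{size2}: blocks $\ge N+1$ and block $N$ contribute factors that are close to $1$, the blocks $\le N-1$ (apart from possibly the index $m$) contribute $\frac{z}{w_j}\bigl(1-\frac{w_j}{z}\bigr)$, and the factor at index $m$ is kept apart. Since $\sum_{j<m}|w_j/z|$ and $\sum_{j>m}|z/w_j|$ are each dominated by their largest term and hence negligible once $N$ is large, this yields
\[
\frac{1}{2}\,\frac{|z|^{m+1}}{D}\Bigl|1-\frac{z}{s_m^{(N-1)!}}\Bigr|\ \le\ |f(z)|\ \le\ 2\,\frac{|z|^{m+1}}{D}\Bigl|1-\frac{z}{s_m^{(N-1)!}}\Bigr|,
\]
where $D=\prod_{j<m}|w_j|$, whose logarithm is controlled to leading order by its top term $\log|s_{m-1}^{(N-1)!}|$, so that $D=s_{m+1}^{\,o(1)}$, and $|z|^{m+1}=s_m^{k(m+1)}=s_{m+1}^{\,k}$; when $k>(N-1)!$ one re-groups so that $w_m$ joins the monomial part, replacing $|z|^{m+1}$ by $|z|^{m+2}$ and $D$ by $D\,s_m^{(N-1)!}$, and the straddling factor by $|1-s_m^{(N-1)!}/z|$. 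Using the hypothesis $|1-z/s_m^{(N-1)!}|>\frac12$ for the lower bound and the crude estimate $|1-z/s_m^{(N-1)!}|\le 2\max\{1,s_m^{\,k-(N-1)!}\}$ for the upper bound, and then translating everything into powers of $s_{m+1}=s_1^{(m+1)!}$, the membership $f(z)\in A_{m+1,N}=A(s_{m+1}^{a_{m+1,N}},s_{m+1}^{N!-1})$ reduces to a pair of elementary inequalities in $k$: the lower one is immediate because $k\ge3>2>a_{m+1,N}$, and the upper one asserts that the combined exponent stays below $N!-1$ over the whole range of $k$.

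For the `Further' bound I would run the same estimate on the circle $|z|=s_m^{(N-1)!}$ itself, where $f$ has exactly $m-1$ zeros strictly inside and $|1-z/s_m^{(N-1)!}|$ attains maximum modulus $2$; taking that factor equal to $2$ gives $M(s_m^{(N-1)!},f)\le 3\,s_m^{(N-1)!(m+1)}/D$, and, after dividing through by $(m-1)!$, the required inequality $M(s_m^{(N-1)!},f)<s_{m+1}^{(N-1)!+2}$ collapses to $2(m^2+m)+(N-1)!>0$, which is trivially true. The main obstacle is the exponent bookkeeping: one has to track simultaneously the enormous monomial factor $z^{m+1}$ (or $z^{m+2}$), the denominator $D$, the error products from the far zeros, and the single straddling factor, and also deal with the jump in the count of interior zeros as $k$ crosses $(N-1)!$, so that across the whole range $3\le k\le(N!)^2-3$ the resulting power of $s_{m+1}$ never leaves the interval $(a_{m+1,N},\,N!-1)$. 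Everything else is a routine refinement of the proof of Lemma~\ref{ex2}.
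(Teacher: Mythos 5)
Your overall strategy is the same as the paper's: factorise $f$ as in \eqref{size3}, observe that the zeros of index $j>m$ give factors close to $1$, pull out a factor $|z|/|w_j|$ from each zero of index $j<m$, keep the single straddling factor $\bigl|1-z/s_m^{(N-1)!}\bigr|$ separate, and use the deleted disc to bound that factor below by $\tfrac12$. Your lower bound is therefore fine in spirit, and your route to the ``Further'' inequality is also workable (the paper instead applies the main upper estimate on the circle $|z|=s_m^{(N-1)!+1}$, which lies in the region where $\bigl|1-z/s_m^{(N-1)!}\bigr|>\tfrac12$, to get $M(s_m^{(N-1)!},f)<M(s_m^{(N-1)!+1},f)\le s_{m+1}^{(N-1)!+2}$); your arithmetic there (``dividing through by $(m-1)!$'', reducing to $2(m^2+m)+(N-1)!>0$) does not parse as written, but the underlying bound $M(s_m^{(N-1)!},f)\le O(1)\,s_{m+1}^{(N-1)!}$ is recoverable.

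The genuine gap is the upper bound, which you never carry out: you defer it to ``an elementary inequality asserting that the combined exponent stays below $N!-1$ over the whole range of $k$''. That inequality is false over the range you claim, and your own two-sided estimate shows it. Since $|z|^{m+1}=s_m^{k(m+1)}=s_{m+1}^{k}$ and $D=s_{m+1}^{o(1)}$, your estimate gives $|f(z)|=s_{m+1}^{k+o(1)}\bigl|1-z/s_m^{(N-1)!}\bigr|$, so as soon as $k\ge N!$ (indeed as soon as $k$ is slightly above $N!-1$) one has $|f(z)|>s_{m+1}^{N!-1}$, which is the outer radius of $A_{m+1,N}$ because $b_{m+1,N}=N!-1$; yet $k$ is allowed to run up to $(N!)^2-3=m-2$, where $|f(z)|$ is of order $s_{m+1}^{m-1}$. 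This is exactly the point at which the paper's own computation imposes a restriction: its chain of inequalities ends with $|f(z)|\le s_{m+1}^{k+1}\le s_{m+1}^{N!-2}$ ``since $k\le N!-3$'', i.e.\ the containment $f(z)\in A_{m+1,N}$ is only obtained for exponents $k$ up to about $N!$, not up to $(N!)^2-3$. So the exponent bookkeeping you describe as ``the main obstacle'' and then skip is not a routine refinement of Lemma~\ref{ex2}; it is the decisive step, and as asserted (uniformly over $3\le k\le (N!)^2-3$) it cannot be proved because it fails.
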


\begin{proof}
Suppose that $ m = (N!)^2-1$, for some $N \in \N$, and that $|z|
= s_m^k$, where $3 \leq k \leq (N!)^2 - 3$ and $\left|1-z/s_m^{(N-1)!}\right| > 1/2$. Then
\begin{equation}\label{size3}
 |f(z)| = |z|^2\left| \prod_{n=N}^{\infty} \prod_{i = (n!)^2}^{((n+1)!)^2-1} \left( 1 - \frac{z}{s_i^{n!}}\right)\right|
 \left| \prod_{n=1}^{N-2}\prod_{i = (n!)^2}^{((n+1)!)^2-1}\frac{z}{s_i^{n!}}\left(1 - \frac{s_i^{n!}}{z} \right) \right|
 \end{equation}
 \[
 \left|   \left( 1 - \frac{z}{s_m^{(N-1)!}}\right)\right|
 \left| \prod_{i = ((N-1)!)^2}^{m-1}
\frac{z}{s_i^{(N-1)!}}\left(1 - \frac{s_i^{(N-1)!}}{z} \right)
\right|.
\]

It follows from~\eqref{size3} and~\eqref{big} that, if $N$ is
sufficiently large, then
\begin{eqnarray*}
|f(z)| & \geq & \frac{1}{4}  |z|^{m+1} \left(
\prod_{n=1}^{N-2}\prod_{i =
(n!)^2}^{((n+1)!)^2-1}\frac{1}{s_i^{n!}}\right) \prod_{i =
((N-1)!)^2}^{m-1}
\frac{1}{s_i^{(N-1)!}}\\
& \geq & \frac{1}{4} |z|^{m+1}\frac{1}{s_{m-1}^{2(N-1)!}} \\
& \geq & \frac{1}{4} |z|^{m+1 - 2(N-1)!/m} = \frac{1}{4} s_m^{k(m+1 - 2(N-1)!/m)}\\
& = & \frac{1}{4} s_m^{k(m+1)(1-2(N-1)!/m(m+1))} = \frac{1}{4} s_{m+1}^{k(1-2(N-1)!/m(m+1))}\\
& \geq & s_{m+1}^{k(1-2/m^{3/2}))}\geq s_{m+1}^2.
\end{eqnarray*}
Also, if $N$ is sufficiently large, then
\begin{eqnarray*}
|f(z)| & \leq &  |z|^{m+2} \prod_{n=1}^{N-2}\prod_{i =
(n!)^2}^{((n+1)!)^2-1}\frac{1}{s_i^{n!}} \prod_{i =
((N-1)!)^2}^{m-1}
\frac{1}{s_i^{(N-1)!}}\\
& \leq &  |z|^{m+2} = s_m^{k(m+2)}\\
& \leq &  s_m^{k(m+2)} = s_{m+1}^{k(1+1/(m+1))}\\
& \leq & s_{m+1}^{k+1} \leq s_{m+1}^{N! - 2},
\end{eqnarray*}
since $k \leq N! - 3 $. As $m = (N!)^2 - 1$, this shows that $f(z) \in A_{m+1,N}$.

Since
\[
 \{z: |z| = s_m^{(N-1)!+1}\} \subset \{z: \left|1- z/s_m^{(N-1)!}\right| > 1/2\},
\]
it also follows from the last set of inequalities that, if $N$ is sufficiently large, then
\[
M(s_m^{(N-1)!},f) < M(s_m^{(N-1)!+1},f) \leq s_{m+1}^{(N-1)^2+2}.
\]
This completes the proof of Example~\ref{exa2}.
\end{proof}

In Theorem~\ref{main2c} we showed that, if $f$ has a multiply connected {\wand}~$U$ and $B_n \subset f^n(U)$ is the annulus defined in~\eqref{Bndef}, for large $n \in \N$, then there is a close relationship between the inner and outer boundary components of $f^n(U)$ and the inner and outer boundary components of $B_n$. Our final example shows that it is not possible to strengthen the conclusions of Theorem~\ref{main2c} part (b)(ii) to give $\lim_{n \to \infty} \overline{b}_n/b_n = 1$.

For Example~\ref{exa3} we have $\lim_{n \to \infty} \overline{b}_n/b_n = \infty$; that is, the ratio of the logarithm of the modulus of the largest point on the outer boundary of $f^n(U)$ to the logarithm of the modulus of points on the outer boundary component of the annulus $B_n$ tends to infinity as $n \to \infty$. This contrasts with the fact that this ratio tends to~$1$ when the modulus of the largest point on the outer boundary is replaced by the modulus of the {\it smallest} point on the outer boundary. It also shows that the outer boundary component can be much more distorted than the inner boundary component for which both of these ratios tend to~$1$.

Example~\ref{exa3} also gives an alternative way of constructing a {\tef} with a multiply connected {\wand} $U$ for which the limits $a$ and $b$ defined in Theorem~\ref{main2c} satisfy $b/a < \infty$. We note that in this example the sequence $(r_n)$ plays a different role from that in the earlier sections of the paper.

\begin{example}\label{exa3}
 Let $c>1$. Then there exists an entire function $f$ and sequences $(r_n)$ and $(R_n)$ tending to $\infty$ such that $r_n<R_n$,
\begin{equation}\label{a} f(A(r_n,R_n))\subset A(r_{n+1},R_{n+1})
\end{equation}
and
\begin{equation}\label{b} f([R_n,R_n^n])\subset A(r_{n+1},R_{n+1})
\end{equation}
for all $n\in\N$. Moreover, if $U_n$ is the component of $F(f)$ containing $A(r_n,R_n)$ and if $A(t_n,T_n)$ is the maximal annulus with
\[A(r_n,R_n)\subset A(t_n,T_n)\subset U_n\;,\]
then
\begin{equation}\label{c} T_n< t_n^c\ .
\end{equation}
\end{example}
{\it Remark.} It follows from Theorem~\ref{main2a} that $A(t_n,T_n) = B_n$, for large $n \in \N$, where $B_n$ is the annulus defined in~\eqref{Bndef}. Hence, in the notation of Theorem~\ref{main2c}, $b/a = \lim_{n \to \infty} \log T_n/ \log t_n \leq c$ and for large $n \in \N$ we have $[R_n,R_n^n] \subset U_n$, so
\[
 \frac{\overline{b}_n}{b_n} \geq \frac{\log R_n^n }{\log T_n} > \frac{\log R_n^n}{\log t_n^c} > \frac{\log R_n^n}{\log R_n^c} = \frac nc.
\]
Thus $\lim_{n \to \infty} \overline{b}_n/ b_n = \infty$ as claimed.
 \begin{proof}[Proof of Example~\ref{exa3}]
 We define a sequence $(m_n)$ of positive integers and a sequence $(S_n)$ of real numbers greater than $e$ by recursion. First we choose $m_1$ and $S_1$ large. Assuming that $n\in \N$ and that $m_n$ and $S_n$ have been specified, we put
\begin{equation}\label{d}
\delta_n=\frac{1}{\sqrt{\log S_n}},
\end{equation}
\begin{equation}\label{e}
\displaystyle S_{n+1}=\displaystyle S_n^{(1-\delta_n)m_n+n^3}
\end{equation}
and we choose
\begin{equation}\label{f}
m_{n+1}\geq   S_{n+1}^{m_n+1}\,.
\end{equation}
It is easy to see that the sequences $(m_n)$ and $(S_n)$ tend to $\infty$, and in fact they do so very rapidly.

With
\begin{equation}\label{g}
a_n=S_n^{(1-\delta_n)m_n}=S_{n+1}S^{-n^3}_n,
\end{equation}
we put
\[g_n(z)=a_n\left(\exp\left(-\left(z/S_n\right)^{m_n}\right)-1\right)\]
and
\[g(z)=\sum^\infty_{n=1}g_n(z)\,.\]
We will see below that this series converges locally uniformly in $\C$
so that $g$ is entire.

We now choose $\alpha\in \left(1/c,1\right)$ and put
\[h(z)=\prod^\infty_{n=1}\left(1-\frac{z}{S_n^\alpha}\right)\,.\]
Since $(S_n)$ tends to infinity rapidly, this infinite product converges locally uniformly in $\C$.

The function $f$ is now defined by
\[f(z)=g(z)h(z)\] and the sequence $(R_n)$ is defined by
\begin{equation}\label{h}
\left(\frac{R_n}{S_n}\right)^{m_n}=\frac{1}{2}\,.
\end{equation}

In order to define the sequence $(r_n)$ we first define a sequence $(\beta_n)$ of real numbers recursively by
\begin{equation}\label{i}
\beta_{n+1}=\frac{\beta_n-3\delta_n}{1-\delta_n}\ ,
\end{equation}
with $\beta_1\in (\alpha,1)$. Since $(\delta_n)$ tends to zero rapidly we see that if $S_1$ and $m_1$ are chosen large enough, then the sequence $(\beta_n)$ converges to a limit $\beta\in (\alpha,1)$. Clearly we have $\beta<\beta_{n+1}<\beta_n<1$ for all $n\in \N$. The sequence $(r_n)$ is now defined by
\[r_n=R^{\beta_n}_n\,.\]
Before we prove that the sequences $(r_n)$ and $(R_n)$ have the desired properties, we collect some elementary estimates relating them to the other sequences
defined. First we note that
\begin{equation}\label{j}
\tfrac{1}{2}S_n\leq R_n<S_n
\end{equation}
by \eqref{h}, and thus
\begin{equation}\label{k}
S_{n+1}\geq R_{n+1}\geq r_{n+1}\geq
R_{n+1}^\beta\geq \frac{1}{2^\beta}S_{n+1}^\beta
\geq S_n^{\beta(1-\delta_n)m_n}\geq 4e^{m_n},
\end{equation}
by \eqref{e} for large $n$. Using \eqref{d} and \eqref{f} we see that
\begin{equation}\label{k1}
 m_n\geq m_n\delta_n\geq n^3,
\end{equation}
for large $n$. By \eqref{j} and \eqref{k1},
\[
r_{n+1}\leq
S_{n+1}^{\beta_{n+1}}
= S_n^{\beta_{n+1}(1-\delta_n)m_n+\beta_{n+1} n^3}
=S_n^{\beta_{n}m_n-3\delta_n m_n+\beta_{n+1} n^3}
\leq S_n^{m_n-2\delta_n m_n}
\leq \tfrac{1}{8}S_n^{(1-\delta_n)m_n}
\]
and thus, by~\eqref{g},
\begin{equation}\label{l}
r_{n+1}\leq \tfrac{1}{8}a_n,
\end{equation}
for large $n$.
Finally,
\begin{equation}\label{m}
n\leq a_n\leq \tfrac{1}{4}S_{n+1}\leq \tfrac{1}{2}R_{n+1},
\end{equation}
for large $n$, by \eqref{g} and \eqref{j}.

We shall repeatedly use the inequalities
\begin{equation}\label{m1}
\tfrac{1}{2}|w|\leq \left|e^w-1\right|\leq 2|w|\;\;\mbox{for } |w|\leq\tfrac{1}{2}\,.
\end{equation}
In order to prove \eqref{a} we note first that \eqref{h} and \eqref{m1} yield
\begin{equation}\label{n}
|g_n(z)|\leq 2 a_n \left(\frac{R_n}{S_n}\right)^{m_n}=a_n,\;\;\mbox{for }|z|\leq R_n.
\end{equation}
It also follows from \eqref{d}, \eqref{g} and \eqref{m1} that if $|z|\leq R_n$, then
\begin{eqnarray*}
\sum^\infty_{j=n+1}|g_j(z)| & \leq & 2\sum^\infty_{j=n+1}a_j\left(\frac{R_n}{S_j}\right)^{m_j}\\
& = & 2\sum^\infty_{j=n+1}S_j^{-\delta_jm_j}R^{m_j}_n\\
& = & 2\sum^\infty_{j=n+1}\exp\left(m_j\left(\log R_n-\sqrt{\log S_j}\right)\right)\,.
\end{eqnarray*}

For large $n$ we have $\sqrt{\log S_{n+1}}\geq \log R_n+1$, by \eqref{e}, \eqref{f} and \eqref{j}, and this implies that
\begin{equation}\label{o} \sum^\infty_{j=n+1}|g_j(z)|\leq 1\leq \tfrac{1}{4}r_{n+1},\;\;\mbox{for }|z|\leq R_n,
\end{equation}
if $n$ is large. This also shows that the series for $g$ converges locally uniformly in $\C$ and thus defines an entire function.

For $|z|\leq R_n$ we also have
\begin{eqnarray*}
\sum^{n-1}_{j=1}|g_j(z)| &\leq & \sum^{n-1}_{j=1}a_j\left(\exp\left(\left(R_n/S_j\right)^{m_j}\right)+1\right)\\
& \leq & 2(n-1)a_{n-1}\exp\left(R_n^{m_{n-1}}\right)\\
& \leq & \tfrac{1}{2}R_n^2\exp\left(R_n^{m_{n-1}}\right),
\end{eqnarray*}
by \eqref{m}. Noting that $\frac{1}{2}x^2\leq e^x$ for $x\geq 0$ we obtain
\begin{equation}\label{p} \sum^{n-1}_{j=1}|g_j(z)|\leq \exp\left(R_n^{m_{n-1}+1}\right)\leq
e^{m_n}\leq \tfrac{1}{4}r_{n+1},\;\;\mbox{for } |z|\leq R_n,
\end{equation}
by \eqref{f}, \eqref{j} and \eqref{k}. Combining \eqref{n}, \eqref{o} and \eqref{p} we obtain
\[|g(z)|\leq a_n+\tfrac{1}{2}r_{n+1}\] and thus
\begin{equation}\label{q} |g(z)|\leq 2a_n,\;\;\mbox{for } |z|\leq R_n,
\end{equation}
by \eqref{l}.

For $|z|\leq R_n$ we also have
\[\left|\prod^n_{j=1}\left(1-\frac{z}{S_j^\alpha}\right)\right|\leq \prod^n_{j=1}\left(1+\frac{R_n}{S_j^\alpha}\right)\leq R^n_n \]
and
\[\left|\prod^\infty_{j=n+1}\left(1-\frac{z}{S_j^\alpha}\right)\right|\leq \prod^\infty_{j=n+1}\left(1+\frac{R_n}{S_j^\alpha}\right)\leq 2\]
if $n$ is large, so
\begin{equation}\label{r} |h(z)|\leq 2R^n_n,\;\;\mbox{for } |z|\leq R_n.
\end{equation}
Combining \eqref{q} and \eqref{r} with \eqref{e} and \eqref{g} we obtain, for $|z|\le R_n$,
\begin{equation}\label{s}
|f(z)|
\leq 4a_nR^n_n
\leq 4a_nS^n_n
= 4S_n^{(1-\delta_n)m_n+n}
\leq \tfrac{1}{2}S_{n+1}\leq R_{n+1}.
\end{equation}
For $r_n\leq|z|\leq R_n$ we deduce from \eqref{h} and \eqref{m1} that
\[
|g_n(z)| \geq  \frac{1}{2}a_n\left(\frac{r_n}{S_n}\right)^{m_n}
 = \frac{1}{2}S_n^{(1-\delta_n)m_n}\left(R_n^{m_n}\right)^{\beta_n}S_n^{-m_n}
 = \frac{1}{2^{1+\beta_n}}S_n^{m_n(\beta_n-\delta_n)}\,,
\]
%\begin{eqnarray*}
%|g_k(z)| &\geq & \frac{1}{2}a_k\left(\frac{r_k}{S_k}\right)^{m_k}\\
%         & = & \frac{1}{2}S_k^{(1-\delta_k)m_k}\left(R_k^{m_k}\right)^{B_k}S_k^{-m_k}\\
%         & = & \frac{1}{2^{1+\beta_k}}S_k^{m_k(\beta_k-\delta_k)}\,.
%\end{eqnarray*}
and hence, using \eqref{i} and \eqref{k1}, that
\[
|g_n(z)| \geq  2S_n^{m_n(\beta_n-3\delta_n)+n^3}
          =    2S_n^{m_n(1-\delta_n)\beta_{n+1}+n^3}
          \geq 2S_{n+1}^{\beta_{n+1}}
          \geq 2R_{n+1}^{\beta_{n+1}}
          =    2r_{n+1}\,.
\]
%\begin{eqnarray*}
%|g_k(z)| &\geq & 2S_k^{m_k(\beta_k-2\delta_k)+k^3}\\
%         & = & 2S_k^{m_k(1-\delta_k)\beta_{k+1}+k^3}\\
%         & \geq & 2S_{k+1}^{\beta_{k+1}}\\
%         & \geq & 2R_{k+1}^{\beta_{k+1}}\\
%         & = & 2r_{k+1}\,.
%\end{eqnarray*}
Combining this with \eqref{o} and \eqref{p} yields
\begin{equation}\label{t} |g(z)|\geq r_{n+1},\;\;\mbox{for }r_n\leq |z|\leq R_n\,.
\end{equation}

Classical results say that entire functions of small growth are large outside small neighborhoods of the zeros. Applying such results, or using elementary estimates, it is not difficult to see that
\begin{equation}\label{u}
|h(z)|\geq 1,\;\;\mbox{for }r_n\leq |z|\leq R_n
\end{equation}
for large $n$.
Combining \eqref{t} and \eqref{u} we obtain
\[|f(z)|\geq r_{n+1},\;\;\mbox{for }r_n\leq |z|\leq R_n\]
if $n$ is large. This, together with \eqref{s}, proves \eqref{a} for large $n$.

In order to prove \eqref{b}, let $x\in [R_n,R_n^n]$. Then
\[|g_n(x)|=a_n\left(1-\exp\left(-\left(x/S_n\right)^{m_n}\right)\right)\]
and hence
\begin{equation}\label{v} \tfrac{1}{4}a_n\leq\left(1-e^{-1/2}\right)a_n\leq|g_n(x)|\leq a_n\,.
\end{equation}
We also have
\begin{equation}\label{w}
\sum^{n-1}_{j=1}|g_j(x)|\leq \sum^{n-1}_{j=1}a_j\leq(n-1)a_{n-1}\leq \tfrac{1}{4}R_n^2\leq \tfrac{1}{4}r_{n+1}
\end{equation}
by \eqref{m} and similarly as in \eqref{o} we obtain
\begin{equation}\label{x}
\sum^\infty_{j=n+1}|g_j(x)|\leq 1\leq\tfrac{1}{4}r_{n+1}\,.
\end{equation}
Combining \eqref{v}, \eqref{w} and \eqref{x} with \eqref{l} and \eqref{m} we obtain
\begin{equation}\label{y}
|g(x)|\leq 2 a_n
\end{equation}
and
\begin{equation}\label{z}
|g(x)|\geq \tfrac{1}{4}a_n-\tfrac{1}{2}r_{n+1}\geq r_{n+1}\,.
\end{equation}
Similarly, as in \eqref{r} and \eqref{u}, we have
\begin{equation}\label{z1}
1\leq |h(x)|\leq 2 |x|^n\leq 2R_n^{n^2}\,.
\end{equation}

We deduce from \eqref{y}, \eqref{z} and \eqref{z1} that
\[|f(x)|\leq 4a_nR^{n^2}_n\leq 4S_n^{(1-\delta_n)m_n+n^2}\leq \tfrac{1}{2}S_{n+1}\leq R_{n+1}\]
and
\[|f(x)|\geq r_{n+1}\,.\]
Thus \eqref{b} follows.

To prove \eqref{c} we note that  $t_n\geq S_n^\alpha$ since $f(S_n^\alpha)=0$. We shall prove that $T_n\leq(1+o(1))S_n$ by showing that there exists a zero $z_n$ of $f$ satisfying $|z_n|\sim S_n$ as $n\to \infty$. To this end we consider the curve
\[
\gamma_n=\left\{S_n\left(2\pi i+\tfrac{1}{2}e^{i\varphi}\right)^{1/m_n}:-\pi\leq\varphi\leq \pi\right\}\,,
\]
with the principal branch of the $m_n$-th root. For $z=S_n\left(2\pi i+\frac{1}{2}e^{i\varphi}\right)^{1/m_n} \in \gamma_n$
we have
\[
|g_n(z)|  =  a_n\left|\exp\left(-\left(z/S_n\right)^{m_n}\right)-1\right|
          =  a_n\left|\exp\left(-\tfrac{1}{2}e^{i\varphi}\right)-1\right|
          \geq  \tfrac{1}{4}a_n
          \geq  2r_{n+1}
\]
%\begin{eqnarray*}
%|g_k(z)| & = & a_k\Bigg|\exp\left(-\left(\frac{z}{S_k}\right)^{m_k}\right)-1\Bigg|\\
%         & = & a_k\Bigg|\exp\left(-2\pi i-\frac{1}{2}e^{ie}\right)-1\Bigg|\\
%         & = & a_k\Bigg|\exp\left(-\frac{1}{2}e^{ie}\right)-1\Bigg|\\
%         & \geq & \frac{1}{4}a_k\\
%         & \geq & 2r_{k+1}
%\end{eqnarray*}
by \eqref{l} and \eqref{m1}. Similarly, as before, we see that
\[
\sum^{n-1}_{j=1}|g_j(z)|\leq \tfrac{1}{4}r_{n+1}\;\;\mbox{and}\;\; \sum^\infty_{j=n+1}|g_j(z)|\leq \tfrac{1}{4}r_{n+1}\,,
\]
for $z\in \gamma_n$. Thus
\[|g(z)-g_n(z)|<|g_n(z)|,\;\;\mbox{for } z\in \gamma\,.\]
Since $g_n$ has the zero $S_n(2\pi i)^{1/m_n}$ in the interior of $\gamma_n$, Rouch\'e's theorem implies that $g$ has a zero $z_n$ in the interior of $\gamma_n$. Clearly $|z_n|\sim S_n$, so it follows that $T_n\leq(1+o(1))S_n\leq t_n^{1/\alpha + o(1)}$. Since $\alpha\in (1/c,1)$ we conclude that \eqref{c} holds for large~$n$.

Thus we have proved that \eqref{a},  \eqref{b} and \eqref{c} hold
for large~$n$. Actually, by choosing $m_1$ and $S_1$ large enough,
we may assume that these estimates hold for all~$n$. This completes the proof of Example~\ref{exa3}.
\end{proof}

{\it Acknowledgements}\;We thank N\'uria Fagella for a useful conversation that led us to include Theorem~\ref{curves} and Jian-Hua Zheng for drawing our attention to the question answered by Theorem~\ref{stable}. The last two authors thank Walter Bergweiler and the University of Kiel for their hospitality during a visit in 2009, when this work started.

\end{document}